\newcommand{\na}{\mathbb{A}}
\newcommand{\C}{\mathbb{C}}
\newcommand{\F}{\mathbb{F}}
\renewcommand{\P}{\mathbb{P}}
\newcommand{\Q}{\mathbb{Q}}
\newcommand{\Z}{\mathbb{Z}}
\newcommand{\Ad}{\mathrm{Ad}}
\newcommand{\Aff}{\mathrm{Aff}}
\newcommand{\Aut}{\mathrm{Aut}}
\newcommand{\SAut}{\mathrm{SAut}}
\newcommand{\TAut}{\mathrm{TAut}}
\newcommand{\Card}{\mathrm{Card\,}}
\newcommand{\Elem}{\mathrm{Elem}}
\newcommand{\SElem}{\mathrm{SElem}}
\newcommand{\End}{\mathrm{End}}
\newcommand{\GL}{{\mathrm {GL}}}
\newcommand{\hc}{{\mathrm{hc}}}
\newcommand{\id}{\mathrm{id}}
\newcommand{\Image}{\mathrm{Im}}
\newcommand{\jac}{\mathrm{Jac}}
\newcommand{\Ker}{\mathrm{Ker}}
\newcommand{\SL}{{\mathrm {SL}}}
\newcommand{\St}{{\mathrm {St}}}
\newcommand{\rk}{\mathrm{rk}}
\newcommand{\Spec}{{\mathrm {Spec}}\,}
\renewcommand{\d}{\mathrm{d}}
\renewcommand{\mod}{\,\mathrm{mod}\,}
\newtheorem*{MainA}{Theorem A}
\newtheorem*{MainB}{Theorem B}
\newtheorem*{MainC}{Theorem C}
\newtheorem*{MainD}{Theorem D}
\newtheorem*{MainA1}{Theorem A.1}
\newtheorem*{MainA2}{Theorem A.2}
\newtheorem*{Cor1.1}{Corollary 1.1}
\newtheorem*{Cor1.2}{Corollary 1.2}
\newtheorem*{CorD1}{Corollary D.1}
\newtheorem*{CorD2}{Corollary D.2}
\newtheorem*{vdKthm}{van der Kulk Theorem}
\newtheorem*{Cornu}{Cornulier Theorem}
\newtheorem{lemma}{Lemma}%[section]
\newcommand{\ch}{\operatorname{ch}}
\font\small=cmr10
\begin{document}

\title{Linearity and Nonlinearity of Groups of Polynomial Automorphisms of the Plane}

\author{Olivier Mathieu}

\affil{\small Institut Camille Jordan du CNRS\\

\small Universit\'e de Lyon\\ 

\small F-69622 Villeurbanne Cedex\\

\small mathieu@math.univ-lyon1.fr}

%\begin{document}

\maketitle

\begin{abstract} Let $K$ be a field, and let $\Aut \,K^2$ be the group of polynomial automorphisms of $K^2$. If $K$ is infinite, this group is nonlinear.
Moreover it contains nonlinear FG subgroups when 
$\ch\,K=0$. On the opposite, it contains some
linear "finite codimension" subgroups. This
phenomenon  is specific to dimension two: it is also proved that "finite codimension" subgroups  of
$\Aut\,K^3$ are nonlinear, even for a finite field $K$.
\footnote{Research supported by UMR 5208 du CNRS}

\end{abstract}

\noindent
\centerline{\it This paper is respectfully dedicated to 
Jacques Tits.}

\section*{Introduction}

Recall that a group $\Gamma$ is called {\it linear}, 
or {\it linear over a ring} in case of ambiguity, if
there is an embedding $\Gamma\subset \GL(n,R)$
for some integer $n$ and some commutative ring $R$. 
Moreover $\Gamma$ is called {\it linear over a field} 
if it can be embedded into $\GL(n,K)$ for some integer $n$ and some field $K$.

Let $\Aut\,K^2$ be the group of polynomial automorphisms  
of the affine plane $K^2$. In this paper, we will investigate the linearity or nonlinearity properties of the subgroups 
of $\Aut\,K^2$. In particular, we will  consider the following subgroups

\centerline{$\Aut_0\,K^2=\{\phi\in \Aut\,K^2\vert\,\phi({\bf 0})={\bf 0})\}$, }

\centerline{$\SAut\,K^2=\{\phi\in \Aut\,K^2\vert\,\jac(\phi)=1\}$,}

\centerline{$\SAut_0\,K^2=\SAut\,K^2\cap \Aut_0\,K^2$, and}

\centerline{$\Aut_1\,K^2=\{\phi\in \Aut_0\,K^2\vert\,{\textnormal d}
\phi\vert_{\bf 0}=\id\}$,}

\noindent where $\jac(\phi):=\det\,\d\phi$ is the jacobian of $\phi$. The first result is

\begin{MainA} (A.1) If $K$ is infinite, the group $\SAut_0\,K^2$ is not linear, even over a ring.

(A.2) Moreover if $\ch\,K=0$
the group $\Aut_0\,K^2$ contains finitely generated subgroups which are  not linear,
even over a ring.
\end{MainA}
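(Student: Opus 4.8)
The backbone is the Jung--van der Kulk theorem, which presents $\Aut K^2$ as the amalgamated free product of the affine group and the de~Jonquières (triangular) group over their intersection. Restricting to the stabiliser of the origin, and to Jacobian~$1$ in the unimodular case, this gives
\[
\Aut_0 K^2=\GL(2,K)\ast_{B}\mathrm{Jonq}_0,\qquad \SAut_0 K^2=\SL(2,K)\ast_{B_0}\mathrm{SJonq}_0,
\]
where $\mathrm{Jonq}_0$ consists of the maps $(x,y)\mapsto(\alpha x,\gamma y+p(x))$ with $p(0)=0$, $\mathrm{SJonq}_0$ of those with $\alpha\gamma=1$, and $B,B_0$ are the corresponding subgroups of triangular matrices. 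I shall use two elementary facts: a group having a non-linear subgroup is itself non-linear; and a \emph{finitely generated} group linear over a commutative ring is residually finite (pass to the subring generated by the entries of a finite generating set, which is Noetherian and Jacobson, reduce modulo the nilradical --- nilpotent by Noetherianity --- and note that congruence subgroups modulo a nilpotent ideal are residually finite). By Bass--Serre theory this also tells us where to look: a finitely generated non-linear subgroup of $\Aut_0 K^2$ cannot be elliptic on the Bass--Serre tree, since the vertex groups are conjugates of $\GL(2,K)$ or of $\mathrm{Jonq}_0$, all of whose finitely generated subgroups are linear; so such a subgroup must split as a nontrivial amalgam or HNN extension with edge groups conjugate into $B$.

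For (A.1) I would locate a bad subgroup directly inside $\SAut_0 K^2$. Let $T=\{(x,y)\mapsto(\alpha x,\alpha^{-1}y):\alpha\in K^\times\}$ be the diagonal torus and $U=\{(x,y)\mapsto(x,y+p(x)):p\in xK[x]\}\cong\bigoplus_{k\ge1}Kx^{k}$ the unipotent triangular subgroup; then $T$ normalises $U$, acting on the line $Kx^{k}$ through the character $\chi_k\colon\alpha\mapsto\alpha^{-(k+1)}$. Since $K$ is infinite, $K^\times$ has no finite exponent, so the $\chi_k$ are pairwise distinct; moreover $[T\ltimes U,\,T\ltimes U]=U$, each $Kx^{k}$ being swept out by commutators $[t_\alpha,v]=(\chi_k(\alpha)-1)\,v$. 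The plan is to show that $P:=T\ltimes U$ is linear over no commutative ring, which yields (A.1). Over an algebraically closed field $L$ this is a weight count: by Lie--Kolchin a faithful image of $P$ is triangularisable, $U=[P,P]$ then falls into the unipotent radical, $P$ being metabelian; and $T$ acts on the graded pieces of $\rho(U)$ for the filtration by powers of the nilradical of the finite-dimensional commutative subalgebra of $M_n(L)$ generated by $\rho(U)$ --- a space of dimension $\le n^{2}$ --- through the infinitely many distinct characters $\chi_k$, which is absurd. The real obstacle, and the crux of the theorem, is twofold: the ``weights'' $\chi_k(\alpha)=\alpha^{-(k+1)}$ lie in $K$, for which there need be no ring homomorphism into the representing ring, so the argument must use only the comparison of the $\chi_k$ with one another (with attention to positive characteristic, where the torsion in $U$ constrains $\rho(U)$ differently and one argues instead through automorphisms of the Zariski closure of $\rho(U)$); and one must pass from fields to arbitrary commutative rings --- reducing modulo the nilradical or to residue fields only shows $P$ to be a subdirect product of bounded-dimensional linear groups, which a priori tolerates infinitely many characters, so the single faithful embedding $P\hookrightarrow\GL_n(R)$ has to be exploited intact.

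For (A.2), with $\ch K=0$, I would use the universal property of the amalgam: whenever $A_1\le\GL(2,K)$ and $A_2\le\mathrm{Jonq}_0$ satisfy $A_1\cap B=A_2\cap B=:A_0$, the subgroup $\langle A_1,A_2\rangle$ of $\Aut_0 K^2$ is canonically the amalgam $A_1\ast_{A_0}A_2$, since an $A_0$-normal form is automatically a $\GL(2,K)\ast_{B}\mathrm{Jonq}_0$-normal form. It therefore suffices to choose $A_1,A_2$ and a common abelian subgroup $A_0$ for which $A_1\ast_{A_0}A_2$ is finitely generated and not residually finite; it is then non-linear over every commutative ring, hence so is $\Aut_0 K^2$. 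Concretely I would realise a finitely generated group of Higman's type --- the fundamental group of a graph of Baumslag--Solitar groups $\mathrm{BS}(1,m)$ glued along infinite cyclic subgroups, engineered so that in any finite quotient the cyclic generators are forced to have order simultaneously coprime to and ``amplified by'' a fixed prime, and hence to vanish --- inside $\Aut_0 K^2$ by taking the amplifiable generators to be the de~Jonquières shears $(x,y)\mapsto(x,y+x^{k})$ and the amplifiers to be diagonal maps $(x,y)\mapsto(\gamma x,\delta y)$, using that this conjugation multiplies the coefficient of the shear by $\delta\gamma^{-k}$. The hypothesis $\ch K=0$ is essential, because a shear has infinite order only in characteristic zero; indeed for $K=\overline{\F_p}$ every finitely generated subgroup of $\mathrm{Jonq}_0$ is finite, so no such construction can exist. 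The main difficulty in (A.2) is the bookkeeping of the realisation: arranging all the edge groups to lie inside $B$ with exactly the prescribed intersections, so that the universal property applies and the subgroup obtained is genuinely the intended non-residually-finite group rather than one of its proper quotients.
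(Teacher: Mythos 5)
For (A.1) the strategy breaks at its very first step: the group $P=T\ltimes U$ you single out is precisely the group $\SElem_0(K)$ of elementary unimodular automorphisms fixing the origin, and while it is indeed not linear over a \emph{field}, it \emph{is} linear over a commutative ring. Sending $(z,\sum_k a_kx^k)$ to the family of matrices $\begin{pmatrix}z^{-(k+1)}&a_k\\0&1\end{pmatrix}$, $k\ge 1$, gives a faithful embedding of $P$ into $\GL(2,K^\infty)$, where $K^\infty$ is an infinite product of copies of $K$ (the paper records exactly this in Section 5.5). So ``show that $P$ is linear over no commutative ring'' cannot be done. The passage from fields to rings must instead happen at the level of the ambient group: one shows that the amalgam $\SAut_0\,K^2=\SL(2,K)*_{SB_0(K)}\SElem_0(K)$ has trivial centralizers, and that for such groups a minimal embedding into $\GL(n,R)$ forces $R$ to be prime, so linearity over a ring implies linearity over a field; only then does the non-linearity of $\SElem_0(K)$ over a field conclude. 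Separately, the step you yourself flag as ``the crux'' --- that an abstract representation still sees the pairwise distinct characters $\chi_k$ --- is left unproved, and it is where most of the work lies: in characteristic $p$ one needs a Borel--Tits-type statement (any embedding of $K^*\ltimes_{z^n}K$ forces a semi-algebraic character of degree exactly $n$ on $\End(V)$, together with a genuinely arithmetic lemma showing the degree is well defined), while in characteristic $0$ the weight count is abandoned in favour of the integer-power relations $u_n^h=u_n^{2^{n+1}}$ with $h:(x,y)\mapsto(2x,\tfrac12 y)$, which do transfer to any representation and give $\Ad(\rho(h))$ infinitely many eigenvalues.

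For (A.2) the reduction to finding a finitely generated non-residually-finite subgroup is a legitimate alternative route (Mal'cev), and your amalgam lemma ($A_1\cap B=A_2\cap B=A_0$ implies $\langle A_1,A_2\rangle=A_1*_{A_0}A_2$) is exactly the paper's Lemma \ref{subamal}. But the construction is only gestured at, and the sketch does not work as stated: in a Higman-type cycle every generator must be simultaneously an ``amplifier'' and an ``amplified'' element, whereas in your dictionary the amplified elements are shears (unipotent, inside the de Jonqui\`eres group) and the amplifiers are diagonal linear maps, so no generator can play both roles. Nor is it clear that $\Aut_0\,K^2$ contains any FG non-residually-finite subgroup at all. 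The paper sidesteps this: its group $\Gamma=\Z^2*_{\Z\sigma}(\Z\sigma\ltimes\Z[1/2])$, realized by $S=\tfrac12\id$, a generic $S'\in\GL(2,\Q)$ and the shear $T:(x,y)\mapsto(x,y+x^2)$, may well be residually finite; its non-linearity over a field is proved by showing that any finite-dimensional representation forces $\rho(T)$ to be quasi-unipotent and then, via logarithms and the generalized eigenspace decomposition for $\rho(S)$, forces $\Gamma$ to be solvable, contradicting trivial centralizers; non-linearity over a ring again follows from the trivial-centralizers reduction. To complete your route you would need to actually exhibit and verify a non-residually-finite FG subgroup, or else switch to an argument of this representation-theoretic kind.
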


It was known that the much larger Cremona group $Cr_2(\Q)$ is
not linear over a field, see \cite{C} \cite{P}. 
More recently, the nonlinearity over a field of the whole group
$\Aut\,\Q^2$ was proved in \cite {Co}. In the same paper, Y. Cornulier raised the question (answered by Theorem A) 
of finding a nonlinear finitely generated (FG in the sequel) 
subgroup in $\Aut\,\Q^2$. 

Various authors show that the automorphism groups
of algebraic varieties share some properties with linear groups, see e.g. 
\cite{S00}\cite {BPZ}. More specifically, it was proved in  \cite{L} that
$\Aut\,K^2$ satisfies Tits alternative. Theorem A shows that these results are not a consequence of  classical 
results for linear groups. 

On the  opposite there is

\begin{MainB}
For any  field $K$, the group $\Aut_1\,K^2$ is
 linear over $K(t)$. 
 
 Moreover if
 $K\supset k(t)$ for some infinite field $k$, then
 there exists an embedding 
 $\Aut_1\,K^2\subset \SL(2,K)$.
\end{MainB}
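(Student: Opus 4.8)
The plan is to read off the isomorphism type of $\Aut_1\,K^2$ from the amalgamated product structure of $\Aut\,K^2$, and then to embed the resulting group into $\SL(2,-)$. By the theorem of Jung and van der Kulk, $\Aut\,K^2=\Aff\,K^2\ast_{C}J$, where $\Aff\,K^2=\GL_2(K)\ltimes K^2$ is the affine subgroup, $J=\{(\alpha x+\beta,\gamma y+p(x)):\alpha,\gamma\in K^{\ast},\ \beta\in K,\ p\in K[x]\}$ is the de Jonqui\`{e}res subgroup, and $C=\Aff\,K^2\cap J$. Let $T$ be the associated Bass--Serre tree, so that the vertex stabilizers for the $\Aut\,K^2$-action are the conjugates of $\Aff\,K^2$ and of $J$, and the edge stabilizers are the conjugates of $C$. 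The whole argument rests on describing how $\Aut_1\,K^2$ acts on $T$.

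\textbf{The stabilizers.} Fix $\phi\in\Aut_1\,K^2$ and $g\in\Aut\,K^2$, and put $q=g^{-1}(\mathbf 0)$ and $\psi=g^{-1}\phi g$. Since $\phi$ fixes $\mathbf 0$ one has $\psi(q)=q$ and $\mathrm{d}\phi|_{\mathbf 0}=\mathrm{d}g|_{q}\cdot\mathrm{d}\psi|_{q}\cdot(\mathrm{d}g|_{q})^{-1}$, so $\mathrm{d}\phi|_{\mathbf 0}=\id$ is equivalent to $\mathrm{d}\psi|_{q}=\id$. If $\psi$ is affine, $\mathrm{d}\psi$ is constant, so $\mathrm{d}\psi|_{q}=\id$ forces $\psi$ to be a translation and then $\psi(q)=q$ forces $\psi=\id$; hence $\Aut_1\,K^2\cap g\,\Aff\,K^2\,g^{-1}=\{\id\}$ for every $g$, and in particular $\Aut_1\,K^2$ acts on $T$ with trivial edge stabilizers. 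If instead $\psi=(\alpha x+\beta,\gamma y+p(x))\in J$, then $\mathrm{d}\psi|_{q}=\id$ forces $\alpha=\gamma=1$ and $p'(q_1)=0$, after which $\psi(q)=q$ forces $\beta=0$ and $p(q_1)=0$; so $\psi$ lies in the abelian subgroup $\{(x,y+p(x)):p\in K[x]\}\cong(K[x],+)$, and $\Aut_1\,K^2\cap gJg^{-1}$ is isomorphic to $\{p\in K[x]:(x-q_1)^2\mid p\}$, again a copy of $(K[x],+)$. Since $\GL_2(K)$ normalizes $\Aut_1\,K^2$ --- indeed $\Aut_1\,K^2\trianglelefteq\Aut_0\,K^2=\Aut_1\,K^2\rtimes\GL_2(K)$ via $\phi\mapsto\mathrm{d}\phi|_{\mathbf 0}$ --- the $J$-type vertex stabilizers are all isomorphic to the countable-dimensional $K$-vector space $V:=\{p\in K[x]:p(0)=p'(0)=0\}$, while the affine-type vertex stabilizers are trivial.

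\textbf{The group structure.} By Bass--Serre theory the triviality of the edge stabilizers already shows that $\Aut_1\,K^2$ is a free product of copies of $V$ and a free group, which is all Theorem~B really needs. One can be precise: since every translation lies in $J$, $\Aut\,K^2=\Aut_0\,K^2\cdot J=\Aut_1\,K^2\cdot\GL_2(K)\cdot J$, so every $J$-type vertex of $T$ is $\Aut_1\,K^2$-equivalent to one of the form $mJ$ with $m\in\GL_2(K)$; a routine double-coset computation gives $mJ\sim m'J$ iff $m'\in mB^{-}$ (with $B^{-}$ the lower triangular subgroup), with analogous parametrizations of the edge orbits and of the single affine-type orbit, matching up under incidence. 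The quotient graph of groups is therefore a star, and since a star is a tree one obtains
\[
\Aut_1\,K^2\;\cong\;\underset{p\in\P^1(K)}{\ast}\,V_{p},\qquad V_{p}\cong V\cong(K[x],+),
\]
where $V_{p}$ is the group of automorphisms $z\mapsto z+f(\ell(z))\,v$ with $v$ spanning the line $p$, $\ell$ a complementary linear coordinate, and $f$ vanishing to order $\ge 2$ at the origin.

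\textbf{The embedding, and the main obstacle.} It remains to embed $\ast_{p\in\P^1(K)}V_{p}$ into $\SL(2,K(t))$. Since $(K[x],+)$ embeds, as an abelian group, into $(K(t),+)$ (for instance through $x\mapsto(t-c)^{-1}$), and any free factor can be absorbed into extra copies of $(K(t),+)$, the task reduces to putting a family of $|\P^1(K)|$ copies of $(K(t),+)$, indexed by the points $\zeta\in\P^1(K)\subset\P^1(K(t))$, in Schottky position inside $\SL(2,K(t))$. I would realize the $\zeta$-th factor as $\gamma_{\zeta}\,U_{\zeta}\,\gamma_{\zeta}^{-1}$, where $U_{\zeta}$ consists of the upper unitriangular matrices whose $(1,2)$-entry is a $K$-polynomial in $(t-\zeta)^{-1}$ and $\gamma_{\zeta}\in\SL(2,K)$ sends $\infty$ to $\zeta$: at the $(t-\zeta)$-adic place of $K(t)$ every nontrivial element of this factor has a pole while every other factor is integral there, and a ping-pong argument on the Bruhat--Tits trees of the completions $\widehat{K(t)}$ then identifies the group generated by the whole family with their free product, giving $\Aut_1\,K^2\subset\SL(2,K(t))$. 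For the second assertion, when $K\supseteq k(t)$ with $k$ infinite the field $K$ already carries $|\P^1(K)|$ pairwise inequivalent valuations with residue fields large enough (those over the $(t-c)$-adic valuations of $k(t)$), and $K$ and $\bigoplus_{\N}K$ have the same dimension over the prime field, so the same construction runs directly inside $\SL(2,K)$. The structural part (Bass--Serre plus the computation with $\mathrm{d}\phi|_{\mathbf 0}$) is routine; the combination step is the real content, and the step I expect to be hardest --- one must place $|\P^1(K)|$-many parabolic subgroups of a single $\SL_2$ simultaneously in Schottky position while keeping every matrix entry a rational function, so as to land in $\SL(2,K(t))$ rather than in $\SL(2,K((t)))$, and, for the "moreover", one must exhibit enough valuations of $K$ with big enough residue fields. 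This is exactly where the hypotheses "$K$ infinite", respectively "$K\supseteq k(t)$ with $k$ infinite", are used.
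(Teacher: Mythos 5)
Your first half is correct and is essentially the paper's: the identification $\Aut_1\,K^2\cong *_{\delta\in\P^1_K}F_\delta$, with each $F_\delta$ a copy of the additive group of polynomials vanishing to order $\ge 2$, is exactly Lemma \ref{freeAut} (the paper derives it from the van der Kulk decomposition via Lemma \ref{mixing1} rather than by an explicit Bass--Serre tree computation, but the content and the stabilizer calculations are the same). The second half, however, is where the theorem actually lives, and there you have only a strategy, not a proof, and the strategy as stated has a genuine gap. You want to place the $\zeta$-th free factor as unipotent matrices with poles only at $\zeta$ and then argue that "at the $(t-\zeta)$-adic place every nontrivial element of this factor has a pole while every other factor is integral there." That observation does not yield freeness: a reduced word $g_1g_2\cdots g_n$ may revisit the factor $\zeta_1$ many times, so at the place $\zeta_1$ you are multiplying several non-integral matrices against integral ones, and nothing prevents the poles from cancelling. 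To repair this you would need a genuine ping-pong, but your factors act on pairwise different Bruhat--Tits trees (one per place), so there is no single playground on which to set up the tables; the usual Ping-Pong Lemma (one action, disjoint attracting sets $\Omega_p$ with $E_p^*\Omega_q\subset\Omega_p$) does not apply as stated, and you have not supplied the nontrivial substitute it would require. You correctly flag this as the hardest step, but flagging it is not the same as doing it.

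The paper's resolution is worth contrasting with your plan, because it removes the difficulty rather than confronting it. All the factors are realized inside the single group $\GL(2,K[t])$ --- with polynomial entries, hence no poles anywhere --- as $E_\delta=\{\id+tf(t)e_\delta\mid f\in K[t]\}$, where $e_\delta$ is the rank-one nilpotent with image $\delta$. One then plays a single ping-pong on $\Omega=K[t]^2\setminus\{0\}$ with tables $\Omega_\delta=\{v\mid \hc(v)\in\delta\}$, $\hc(v)$ being the top-degree coefficient vector: a nontrivial element of $E_\delta$ sends $\Omega_{\delta'}$ into $\Omega_\delta$ for $\delta'\neq\delta$ because $e_\delta u\neq 0$ for $0\neq u\notin\delta$. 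Combined with a generation lemma, this gives the sharper statement $\Aut_1\,K^2\cong\GL_1(2,K[t])=*_{\delta\in\P^1_K}E_\delta\subset\SL(2,K(t))$. For the "moreover," the paper does not hunt for valuations of $K$: since $\Aut_1\,K^2$ is a free product of $\Card\,\P^1_K$ copies of a prime-field vector space of dimension $\Card\,K$, its isomorphism type depends only on $\Card\,K$ and $\ch\,K$ (Lemma \ref{Card}); one then picks $L$ with $L(t)\subset K$ and $\Card\,L=\Card\,K$ and concludes $\Aut_1\,K^2\cong\Aut_1\,L^2\subset\SL(2,L(t))\subset\SL(2,K)$. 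Your cardinality remark points in this direction, but your execution again routes through the unestablished simultaneous Schottky configuration. If you want to salvage your approach, the honest advice is to abandon the "one place per factor" idea and put all factors in polynomial (pole-free) position so that a single leading-term invariant controls the whole free product.
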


\noindent 
Note that Theorem B implies that 
 $\Aut\,K^2$ is linear when $K$ is finite.
The previous results suggest to ask

\centerline{\it which groups $G$ containing $\Aut_1\,K^2$ are linear?}
 
\noindent  If $G\not\subset \Aut_0\,K^2$, is easy to show that  $G$ contains $\SAut\,K^2$, an therefore
$G$ is nonlinear except if $K$ is finite. Otherwise
$G$ is isomorphic to 
$\Aut_S\,K^2:=\{\phi\in\Aut_0\,K^2\,\vert\, \d\phi_{\bf 0}\in S\}$, where $S$ is a subgroup of $\GL(2,K)$.
Therefore the previous question can be reformulate as

\centerline{\it for which subgroups $S$  of $\GL(2,K)$, is the group
$\Aut_S\,K^2$  linear?}

\noindent Theorem C provides a complicated condition 
${\cal G}$ for the linearity. For example, it shows that 
the group
$\Aut_S\,K^2$ is linear over  $K(t)$ for

\centerline{$S=\SL(2,\Z[t_1,t_1^{-1},\dots, t_n,t_n^{-1}][x_1,\dots x_m])$, and} 

\centerline{$K=\Q(t_1,t_1^{-1},\dots, t_n,t_n^{-1})((x_1,\dots x_m))$.}

\noindent This group is an example where the hypothesis ${\cal G}$ is fully used.

Next, we show that Theorem B is specific to dimension
$2$. For a finite codimensional ideal ${\bf m}$ in
$K[z,x,y]$ let $\Aut_{\bf m}\,K^3$ be the group of
all automorphisms of $K^3$ of the form

\centerline{ $(z,x,y)\mapsto 
(z+f,x+g,y+h)$,}

\noindent where $f,\,g$ and $h$ belongs to ${\bf m}$.
The group $\Aut_{\bf m}\,K^3$ is analogous to
$\Aut_1\,K^2$. However it is never linear, even if $K$ is finite, as shown by

\begin{MainD} The group $\Aut_{\bf m}\,K^3$ is not
linear, even over a ring.
\end{MainD}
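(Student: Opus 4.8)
The plan is to exhibit inside $\Aut_{\bf m}\,K^3$ a finitely generated subgroup that cannot be linear, by producing a subgroup which is not residually finite — or, more precisely, a subgroup with a pathology that linear groups (which are residually finite when finitely generated, by Malcev) cannot have. The natural candidates live in the unipotent "shear" part: fix the ideal ${\bf m}=(z,x,y)$ (the general finite-codimensional case reduces to, or maps onto, this one after noting $\Aut_{\bf m}$ surjects onto the analogous group for the maximal ideal, so nonlinearity of a quotient gives nonlinearity of the group). Inside $\Aut_{\bf m}\,K^3$ consider the two one-parameter-type families of triangular automorphisms $\alpha_p:(z,x,y)\mapsto(z,x,y+z\,p(x))$ and $\beta_q:(z,x,y)\mapsto(z,x+z\,q(y),y)$ for polynomials $p,q$; composing these produces automorphisms whose effect on the coordinate $x$ or $y$ involves $p(x+z q(\cdots))$, i.e. substitution of one polynomial into another. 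This substitution structure is exactly what lets one encode, inside a finitely generated subgroup, a group that is known to be non-linear.

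Concretely, I would try to embed a finitely generated group known to be non-(residually finite) — for instance a suitable lamplighter-like or Baumslag–Solitar–like group, or better a finitely generated subgroup of the group of "formal" automorphisms that acts on $K[x]$ (polynomials in one variable) by affine substitutions $x\mapsto ax+b$ together with multiplication operators, realized through the $z$-linear layer. The key mechanism: because $z$ is a nilpotent-like direction ($z^2$-terms are still allowed, so there is no truncation), the commutators $[\alpha_p,\beta_q]$ land again in $\Aut_{\bf m}\,K^3$ and iterating gives unboundedly deep nested substitutions, manufacturing a free or wreath-type subgroup. Then invoke the contrast with Theorem B / linearity: a finitely generated linear group over any commutative ring is residually finite (Malcev), and more sharply its torsion is bounded and it satisfies strong finiteness properties; the subgroup we built violates one of these — the cleanest target is to show it has an infinite simple, or infinite torsion non-residually-finite, quotient coming from the substitution dynamics on $K[x]$, or that it contains an infinitely generated abelian subgroup arranged so that no finite-index subgroup separates a chosen element, contradicting residual finiteness.

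The step I expect to be the main obstacle is making the encoding precise: one must choose the generators $\alpha_{p_i},\beta_{q_j}$ so that (a) the subgroup $\Gamma$ they generate actually lies in $\Aut_{\bf m}\,K^3$ — automatic since each generator does and the group is closed — but more importantly (b) that $\Gamma$ has an identifiable non-linear isomorphism type, which requires controlling the normal form of words in $\alpha$'s and $\beta$'s, i.e. understanding when a nested composition equals the identity. This is a question about the free product / amalgam structure of triangular automorphism subgroups of $K^3$, and unlike the $2$-dimensional van der Kulk picture there is no tame-generation theorem available; so I would instead argue indirectly, constructing an explicit homomorphism $from$ a known non-linear finitely generated group $L$ $into$ $\Aut_{\bf m}\,K^3$ and proving injectivity by exhibiting a faithful action of $\Aut_{\bf m}\,K^3$ (or of the relevant subgroup) on a set where the image of $L$ visibly acts faithfully — e.g. the action on the "leading $z$-linear part", which is a module over $K[x,y]$, combined with the action on $K[x,y]$ by substitution. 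Finally, to cover $\Aut_{\bf m}\,K^3$ for a general finite-codimensional ${\bf m}$ and to get "not linear even over a ring", I would reduce to the maximal-ideal case by a base-change / quotient argument and then quote Malcev's theorem in the ring-coefficient form: a finitely generated subgroup of $\GL(n,R)$ is residually finite, which the constructed $\Gamma$ is not.
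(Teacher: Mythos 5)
Your proposal has two genuine gaps. The first is the reduction step: the claim that the general finite-codimensional case ``reduces to, or maps onto'' the maximal-ideal case goes in the wrong direction. If ${\bf m}\subset(z,x,y)$ then $\Aut_{\bf m}\,K^3$ is a \emph{subgroup} of $\Aut_{(z,x,y)}\,K^3$, not a quotient of it, and nonlinearity does not pass to subgroups; moreover a general finite-codimensional ${\bf m}$ need not even be contained in $(z,x,y)$. The paper treats all ${\bf m}$ uniformly by setting $I={\bf m}\cap K[z]$, a nonzero ideal of $K[z]$, and working with the congruence subgroups $\Aff(2,I)$, $B(I)$, $\Elem(I)$ of $\Aut\,K(z)^2$, which visibly land inside $\Aut_{\bf m}\,K^3$ (and even inside $\TAut_{\bf m}\,K^3$).

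The second and more serious gap is that the core of your argument --- embedding a finitely generated non-residually-finite group and quoting Malcev --- is never carried out, and the candidates you name do not do the job: lamplighter groups are residually finite and linear over $\F_p[t,t^{-1}]$, and you give no way to realize a Baumslag--Solitar-type relation $\gamma\alpha\gamma^{-1}=\alpha^2$ with the conjugating element $\gamma$ itself lying in $\Aut_{\bf m}\,K^3$ (the analogous conjugator in Section 3 of the paper is a nontrivial linear map, which is excluded here since all linear parts must be $\equiv\id$ modulo ${\bf m}$). You correctly identify the normal-form problem for words in your shears as the main obstacle, but you leave it unresolved, so no nonlinear subgroup is actually produced; it is not even clear that $\Aut_{\bf m}\,K^3$ contains a finitely generated nonlinear subgroup, and the paper does not claim one. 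The paper's route avoids both difficulties: it shows $\Elem(I)\simeq I\ltimes I[x]$ is nil yet contains subgroups of arbitrarily large nilpotency index (Lemmas \ref{G(r)}--\ref{ElemI}), hence is not linear over a field --- no finite generation and no residual finiteness are needed --- and then upgrades ``not linear over a field'' to ``not linear even over a ring'' via the amalgam $\Aff(2,I)*_{B(I)}\Elem(I)$, the verification of hypothesis ${\cal H}$ in Lemma \ref{HI}, and the criterion of Lemma \ref{criterion}. To salvage your approach you would need an explicit faithful embedding of a specific finitely generated non-residually-finite group, with a proof of injectivity; as written the proposal does not constitute a proof.
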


The paper is organized as follows. In section 1,
the general notions concerning $\Aut\,K^2$ are defined,
and the classical result of van der Kulk \cite{vdK} 
is stated. In the next section, it is proved that
some linear groups obtained by amalgamation  are indeed
linear over a field. Therefore, it is possible to use
the theory of algebraic groups to show that some groups are not
linear. This is used to prove Theorem A.2 in section 3.

Borel and Tits \cite{BT} showed that certain group morphisms of
algebraic groups are semi-algebraic. A very simple form of their ideas are used to prove Theorem A.1, see Sections 4 and 5.

Next the proof of Theorem B and C is based on some  Ping-Pong ideas. These ideas were originally invented for the dynamic of groups with respect to 
the euclidean metric topologies \cite{FN}, but  they were used by Tits in the context of the ultrametric topologies \cite{T72}.

\bigskip\noindent
{\it Aknowledgements} J.P. Furter and R. Boutonnet informed us that they independently found a FG subgroup of $\Aut\, \Q^2$
which is not linear over a field \cite{BF}. We also heartily thank Y. Cornulier, 
S. Lamy and I. Soroko for interesting comments and E. Zelmanov for an inspiring talk.

 \section{The van der Kulk Theorem}

In this section, we review the basic facts about the amalgamated products of groups. Then the we recall the classical 
van der Kulk Theorem.

\bigskip
\noindent {\it 1.1 Group functors}

\noindent  A group functor  
is a functor $G:R \mapsto G(R)$ from the category of commutative  rings $R$ to the category of groups. In most cases, we will only define the group $G(K)$ for a
 field $K$ and the reader should understand that the definition over a ring is similar. When $I$ is an ideal of a ring $R$, we denote by
$G(I)$ the kernel of the map $G(R)\to G(R/I)$.
For the functorial approach to group theory, see 
e.g. \cite{D} and  \cite{T89}.

Nevertheless, will use a nonconsistent notation, namely 
$\Aut\, K^2$, 
for the group  of polynomial automorphisms of $K^2$, as well for its subgroups defined in the introduction.
Similarly, we will use the notation $\Aut\,K^3$ in section 10.

A group subfunctor $H\subset G$ of {\it finite codimension} means that the functor $R\mapsto G(R)/H(R)$ is represented 
by a scheme of finite dimension. The reader should understand this as an unformal definition,
and we will not care to be more precise. For example, 
the subgroups $\Aut_0\,K^2$   has codimension $2$   
in $\Aut\,K^2$.

\bigskip
\noindent {\it 1.2  Amalgamated products}
 
\noindent  Let $A$, $G_1$ and $G_2$ be groups. Their {\it free product} is denoted by $G_1 *G_2$, see \cite{MKS} ch.4.  Now let 
$A {\buildrel f_1\over\rightarrow} G_1$ and
$A {\buildrel f_2\over\rightarrow} G_2$ be two injective group morphisms and let $K\subset G_1 *G_2$ be the invariant subgroup
generated by the elements $f_1(a)f_2(a^{-1})$ when $a$ runs
over $A$. By definition the group $G_1*G_2/K$ is called 
the {\it amalgamated product} of $G$ and $H$ over $A$,
and it is denoted by $G_1 *_A G_2$, 
see e.g. \cite{S83}, ch. I. 
 In the literature, the amalgamated products are also called
free amalgamated products, see \cite {MKS} ch.8. 

Recall that the natural maps 
 $G_1\rightarrow\Gamma$ and $G_2\rightarrow\Gamma$ are
 injective, see the remark after the Theorem 1 of  ch. 1 in \cite{S83}. Hence, we will use a less formal terminology.
The group $A$ will be viewed as a common subgroup 
 of $G_1$ and $G_2$, and $G_1$ and $G_2$ will be viewed as
 subgroups of  $G_1 *_A G_2$.

  \bigskip \noindent 
{\it 1.3 Reduced words}

\noindent
The usual definition \cite{S83} of reduced words
is based on the right $A$-cosets. In order to avoid a confusion
between the set difference notation $A\setminus X$ 
and the $A$-orbits notation $A\backslash X$, we will
use a definition based on the left $A$-cosets.

Let $G_1$, $G_2$ be two groups sharing a common subgroup $A$,
and let $\Gamma=G_1 *_A G_2$.
Set $G_1^*=G_1\setminus A$,  $G_2^*=G_2\setminus A$
and let $T_1^*\subset G_1^* $ (respectively 
$T_2^*\subset G_2^* $) be a set of  representatives of  
$G_1^*/A$ (respectively of $G_2^*/A$).

Let $\Sigma$ be the set of all finite alternating sequences of ones and twos and let 
$\epsilon\in \Sigma$. 
A {\it reduced word} of {\it type $\epsilon$}  is a word
$(x_1,\dots x_n,x_0)$ where $x_0$ is in $A$, and
$x_i\in T_{\epsilon_i}^*$ for $i\geq 1$.
Let ${\cal R}$ be the set of all reduced words.
The next lemma is well-known, see  
 e.g. \cite{S83}, Theorem 1.

\begin{lemma}\label{words} The map

\centerline{ $(x_1,\dots x_n,x_0)\in {\cal R} 
 \mapsto  x_1\dots x_n x_0 \in G_1*_A G_2$}
 
 \noindent is bijective.
 
 \end{lemma}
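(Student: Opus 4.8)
\textbf{Proof proposal for Lemma~\ref{words}.} The plan is to follow the standard normal-form argument for amalgamated products (as in Serre, \cite{S83}, Theorem~1), but phrased with left $A$-cosets so as to match the conventions fixed in \S1.3. The map is visibly well-defined, so everything is in showing it is both injective and surjective.

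Surjectivity I would establish first, and it is the easy half. Every element of $\Gamma = G_1 *_A G_2$ is, by definition of the free product modulo the amalgamation relations, represented by some word in the letters of $G_1$ and $G_2$. Given such a representative, I repeatedly (i)~collapse two adjacent letters coming from the same factor $G_i$ into one, and (ii)~if a letter lies in $A$, absorb it into the neighbor to its left (using that $A$ is a common subgroup of both $G_1$ and $G_2$), pushing $A$-parts leftward until they accumulate in a single trailing factor $x_0 \in A$. This terminates because each step strictly decreases the length, and the output is a sequence $(x_1,\dots,x_n,x_0)$ with $x_0 \in A$, the $x_i$ alternating between $G_1^* $ and $G_2^*$; finally replace each $x_i$ by its representative in $T_{\epsilon_i}^*$, adjusting $x_0$ on the left accordingly. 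Hence the map is onto.

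For injectivity — the substantive half — I would use the classical action trick rather than a direct cancellation argument, since it is cleaner. Let $X$ be the set $\cal R$ of all reduced words (of all types, including the empty type, which gives just $x_0 \in A$). I construct a left action of $\Gamma$ on $X$ by prescribing how each $g \in G_1$ and each $g \in G_2$ acts on a reduced word $w = (x_1,\dots,x_n,x_0)$: to compute $g \cdot w$ when $g \in G_1$, say, form $g x_1 \in G_1$ if $\epsilon_1 = 1$ (or just $g$ if $w$ has no letters, or $\epsilon_1 = 2$), write that element as $t a$ with $t \in T_1^* \cup \{e\}$ and $a \in A$, and prepend/merge appropriately — the three cases being that the product lands in a new $G_1^*$-letter, lands in $A$ (so no new letter, $A$-part pushed down), or cancels the existing first letter. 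One checks this is a well-defined action of each $G_i$, that the two actions agree on the common subgroup $A$, and hence by the universal property they glue to an action of $\Gamma$; this last gluing is where one must be slightly careful, verifying compatibility with the defining relations $f_1(a) f_2(a^{-1})$. Then, evaluating the $\Gamma$-action at the trivial reduced word $(\,;e)$, one sees that $x_1 \cdots x_n x_0$ acts by sending $(\,;e)$ to $(x_1,\dots,x_n,x_0)$ itself, so distinct reduced words give distinct elements of $\Gamma$.

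The main obstacle is purely bookkeeping: verifying that the prescribed formulas genuinely define a group action of each $G_i$ on $X$ — i.e. that $(gh)\cdot w = g\cdot(h\cdot w)$ — and that the two actions restrict to the same action of $A$, which is what licenses the passage to $\Gamma$. Because we are using left cosets $G_i^*/A$ and trailing $A$-factors, one has to be consistent about whether $A$-parts are pushed to the left or right; the whole point of the remark in \S1.3 is to keep this consistent, and I would simply be disciplined about always carrying the $A$-part as the rightmost entry $x_0$ and multiplying new letters on the left. No genuinely new idea is needed beyond Serre's; I would cite \cite{S83} for the parts of the verification that are routine and only spell out the left-coset adaptation.
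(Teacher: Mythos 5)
Your argument is the standard van der Waerden normal-form proof (surjectivity by reduction, injectivity via the action of $\Gamma$ on the set of reduced words), which is exactly the proof of Theorem 1 in \cite{S83} that the paper cites in lieu of giving its own argument. The only point to watch is the one you already flag: with left cosets and a trailing $A$-factor the cleaner construction is a \emph{right} action evaluated at the empty word (left multiplication forces you to push the $A$-remainder through every subsequent letter), but this is bookkeeping, not a gap.
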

 
 Set $\Gamma=G_1*_A G_2$. For $\gamma\in \Gamma\setminus A$
 there is some integer $n\geq 1$, some 
 $\epsilon=(\epsilon_1,\dots,\epsilon_n)\in \Sigma$ and some $g_i\in G_{\epsilon_i}^*$
 such that $\gamma=g_1...g_n$. It follows from
 \cite {S83} (see the remark after Theorem 1, ch. 1) that
 $\gamma=x_1...x_nx_0$ for some reduced word 
 $(x_1...x_nx_0)$ of type $\epsilon$. Since it is determined by $\gamma$, the sequence $\epsilon$ is called 
{\it the type} of $\gamma$.

 \bigskip \noindent 
{\it 1.4 Amalgamated product of subgroups}

\noindent
Let $G_1$, $G_2$ be two groups sharing a common subgroup $A$ and set $\Gamma=G_1*_A G_2$.
Let $G_1'\subset G_1$, $G_2'\subset G_2$
and $A'\subset A$ be subgroups with the property that $G_1'\cap A=G'_2\cap A=A'$.

 \begin{lemma}\label{subamal} (i) The natural map 
 $G_1'*_{A'} G_2'\to \Gamma$ is injective. 
 
 (ii) Let $\Gamma'\subset \Gamma$ be a subgroup
 and set $G_1'= G_1\cap \Gamma'$, 
 $G_2'= G_2\cap \Gamma'$ and $A'= A\cap \Gamma'$. 
Assume that $\Gamma'.A=\Gamma$. Then  we have
 
\centerline{$\Gamma'=G_1'*_{A'} G_2'$.}

 \end{lemma}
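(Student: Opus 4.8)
The plan is to prove both parts by working directly with reduced words, using Lemma~\ref{words} as the book-keeping device.

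For part (i): the natural map $\iota\colon G_1' *_{A'} G_2' \to \Gamma$ is a homomorphism, and the hypothesis $G_1'\cap A = G_2'\cap A = A'$ guarantees that $A'$ is literally the intersection, so the amalgam on the left is formed over exactly the subgroup one expects. To show $\iota$ is injective, I would show that a reduced word in $G_1' *_{A'} G_2'$ maps to a reduced word in $\Gamma$ of the same type (up to replacing the coset representatives). Concretely: choose the representative sets $T_1^*, T_2^*$ for $\Gamma$ so that they are \emph{compatible} with a chosen system of representatives for $G_1'^*/A'$ and $G_2'^*/A'$ — that is, every element of $G_i'^* = G_i'\setminus A'$, which by the intersection hypothesis equals $G_i' \setminus A$, lies in a distinct $A$-coset, so representatives for $G_i'^*/A'$ can be taken among representatives for $G_i^*/A$. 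Then a nonempty reduced word $(x_1,\dots,x_n,x_0)$ over the primed data is already a reduced word over the unprimed data, hence by Lemma~\ref{words} maps to a non-identity element of $\Gamma$. This gives injectivity of $\iota$ on $\Gamma'_{\mathrm{amalg}} \setminus A'$, and injectivity on $A'$ is clear, so $\iota$ is injective.

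For part (ii): by part (i) the subgroup of $\Gamma$ generated by $G_1'$ and $G_2'$ is a quotient-free copy of $G_1' *_{A'} G_2'$; call this subgroup $\Lambda \subseteq \Gamma'$. I must show $\Lambda = \Gamma'$. Take $\gamma \in \Gamma'$. If $\gamma \in A$, then $\gamma \in A\cap\Gamma' = A' \subseteq \Lambda$. Otherwise $\gamma$ has a well-defined type $\epsilon = (\epsilon_1,\dots,\epsilon_n)$ and can be written $\gamma = g_1\cdots g_n$ with $g_i \in G_{\epsilon_i}^*$. I would argue by induction on $n$ that each $g_i$ can be chosen inside $G_{\epsilon_i}'$. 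The key input is the hypothesis $\Gamma' A = \Gamma$: this lets me absorb $A$-ambiguity. More precisely, split off $g_1$: the element $g_1^{-1}\gamma = g_2\cdots g_n$; I want to replace $g_1$ by an element of $G_{\epsilon_1}'$ times something that can be pushed into the tail. Since $\gamma\in\Gamma'$, the leftmost syllable $g_1$ lies in $G_{\epsilon_1}$; I claim that because $\Gamma'A = \Gamma$ and $\Gamma'$ is a group, in fact $g_1 A \cap \Gamma' \neq \emptyset$, i.e. $g_1 = g_1' a$ with $g_1' \in G_{\epsilon_1}\cap\Gamma' = G_{\epsilon_1}'$ and $a\in A$; then $a g_2 \cdots g_n = g_1'^{-1}\gamma \in \Gamma'$ has type of length $n-1$ (after absorbing $a$ into $g_2$, noting $a g_2 \in G_{\epsilon_2}^*$ still, as $n\geq 2$ forces $g_2\notin A$ even after multiplying by $a\in A$), and induction finishes it.

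The main obstacle is justifying that claim — that the leftmost syllable of an element of $\Gamma'$ can be taken in $\Gamma'$ up to a right $A$-factor. This is where $\Gamma' A = \Gamma$ must be used essentially, and the cleanest route is: the composite $\Gamma' \hookrightarrow \Gamma \twoheadrightarrow A\backslash\Gamma$ is onto by hypothesis (equivalently $\Gamma/A$, switching sides carefully as the paper does in 1.3), and $A\backslash \Gamma$ is described by Lemma~\ref{words} as reduced words; one reads off that for $\gamma = g_1\cdots g_n \in \Gamma'$ the double coset $A g_1 A$ meets $\Gamma'$, and then uses that $A \cap \Gamma' = A'$ together with a left-coset refinement to pull $g_1$ itself (not just its double coset) into $\Gamma'$ modulo right multiplication by $A$. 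I would isolate this as a short sub-lemma about the action of $\Gamma'$ on the Bass–Serre tree of $\Gamma$ — a vertex stabilized by the whole of $\Gamma'$ when $\Gamma' A = \Gamma$ — which makes both the "leftmost syllable" statement and the induction transparent, at the cost of invoking the tree, which the paper has not set up; absent that, the bare-hands coset argument above is the fallback.
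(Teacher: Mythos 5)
Your argument is correct and is essentially the paper's: part (i) is verbatim the paper's proof (choose $T_i'^*\subset T_i^*$, observe that reduced words over the primed data remain reduced over the unprimed data, apply Lemma \ref{words}), and for part (ii) the paper's version of your induction is the global statement that $\Gamma'A=\Gamma$ makes the maps $G_i'/A'\to G_i/A$ bijective, so one may take $T_i'^*=T_i^*$; then the reduced expression $\gamma=x_1\cdots x_nx_0$ of any $\gamma\in\Gamma'$ has $x_1\cdots x_n\in G_1'*_{A'}G_2'\subset\Gamma'$, forcing $x_0\in A\cap\Gamma'=A'$. The only thing worth correcting is that the ``main obstacle'' you flag at the end is not an obstacle: the claim that $g_1A\cap\Gamma'\neq\emptyset$ with the representative in $G_{\epsilon_1}'$ is a one-line consequence of the hypothesis. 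Indeed $g_1\in\Gamma=\Gamma'A$ gives $g_1=\gamma'a$ with $\gamma'\in\Gamma'$, $a\in A$, and then $\gamma'=g_1a^{-1}\in G_{\epsilon_1}\cap\Gamma'=G_{\epsilon_1}'$. No double-coset refinement, no sub-lemma, and no Bass--Serre tree is needed; with that line inserted your induction closes and matches the paper.
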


\begin{proof} {\it First step: proof of Assertion (i).}
For $i=1,2$ set $G_i^*=G_i\setminus A$,
$G_i^{'*}=G'_i\setminus A'$. Let 
$T^*_i\subset G_i^*$ and $T^{'*}_i\subset G_i^{'*}$
be a set of representatives of
$G_i^*/A$ and, respectively, of $G_i^{'*}/A'$.

Since the maps
$G_i'/A'\to G_i/A$ are injective, it can be assumed that  $T^{'*}_i\subset T^{*}_i$. Let $\cal R$ 
and $\cal R'$ be the set of reduced words of
$G_1*_A G_2$, and respectively of $G_1'*_{A'} G_2'$.
By definition, we have $\cal R'\subset \cal R$, thus
by lemma \ref{words} the map $G_1'*_{A'} G_2'\mapsto G_1*_A G_2$
is injective.

\noindent
{\it Second step: proof of Assertion (ii).}
We will use the notations of the previous proof.
Since $\Gamma'.A=\Gamma$, it follows that 
the  maps $G'_1/A'\to G/A$ and $G_2'/A'\to G_2/A$ are bijective. Therefore ${\cal R'}$ is the set of
all reduced words $(x_1,\dots,x_n,x_0)\in{\cal R}$ such that $x_0\in A'$. It follows easily that

\centerline{
$G_1*_A G_2/G_1'*_{A'} G_2'\simeq A/A'=\Gamma/\Gamma'$,}

\noindent and therefore we have 
$\Gamma'=G_1'*_{A'} G_2'$.

\end{proof}

 \bigskip\noindent
 {\it 1.5 The subgroup $\Elem(K)$ of elementary automorphisms of $K^2$}
 
\noindent By definition, an {\it elementary automorphism}  of $K^2$ is an automorphism $\phi\in \Aut\,K^2$ of the form 
 
 \centerline{$\phi:(x,y)\mapsto (z_1 x +t, z_2 y +f(x))$}
 
 \noindent for some $z_1, z_2\in K^*$, some $t\in K$ and some $f\in K[x]$. The group of elementary automorphism is denoted
 $\Elem (K)$.

 \bigskip\noindent
 {\it 1.6 The affine group $\Aff(2,K)$}

\noindent Let $\Aff(2,K)\subset \Aut\,K^2$ 
be the subgroup
of affine automorphisms of $K^2$.  Set 
$B(K)=\Aff(2,K)\cap \Elem(K)$ 
Indeed $B(K)$ is a Borel subgroup of 
$\Aff(2,K)$  and we have

\centerline{$\Aff(2,K)/B(K)\simeq \P^1_K$.}

\bigskip
\noindent
{\it 1.7 The van der Kulk Theorem}
 
\noindent Recall the classical  

\begin{vdKthm}\cite{vdK} We have 

\centerline{$\Aut\,K^2\simeq \Aff(2,K)*_{B(K)}\Elem(K)$.}

\end{vdKthm}

\section{Amalgamated Products and
Linearity.}

\noindent In this section, it is shown that, under a mild assumption, an amalgamated product $G_1*_A G_2$
which is linear over a ring is also linear over a field,
see Lemma \ref{criterion}.

\bigskip 
\noindent {\it 2.1 Linearity Properties}

\noindent
For a group $\Gamma$,  the strongest form of  linearity is the linearity over a field. On the opposite, there are also   groups 
$\Gamma$ which contain a FG  subgroup which is
not linear, even over a ring: these groups are nonlinear in the strongest sense.

\bigskip\noindent
{\it 2.2 Minimal embeddings}

\noindent Let $R$ be a commutative ring, let $n\geq 1$ and let $\Gamma$ be a subgroup of $\GL(n,R)$. For an ideal $J\subset R$, the elements $g\in \GL(n,J)$ can be written as $g=\id+A$, where $A$ is a $n$-by-$n$ matrix with entries in 
$J$.

The embedding $\Gamma\subset \GL(n,R)$ is called {\it minimal} if
for any ideal $J\neq 0$ we have $\Gamma\cap \GL(n,J)\neq \{1\}$.

\begin{lemma} \label{minimal} Let $\Gamma\subset \GL(n,R)$. There exist an ideal $J$ with
$\Gamma\cap \GL(n,J)=\{1\}$ such that the induced embedding
$\Gamma\to\GL(n,R/J)$ is minimal.
\end{lemma}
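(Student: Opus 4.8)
The plan is to prove Lemma \ref{minimal} by a Zorn's Lemma argument, choosing $J$ to be maximal among the ideals $I$ of $R$ with the property that $\Gamma \cap \GL(n,I) = \{1\}$. The embedding $\Gamma \to \GL(n,R/J)$ is then automatically the induced one once we check it is injective, and minimality will follow from the maximality of $J$.

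\begin{proof}
Let $\mathcal{S}$ be the set of ideals $I \subset R$ such that $\Gamma \cap \GL(n,I) = \{1\}$. Since $0 \in \mathcal{S}$, the set $\mathcal{S}$ is nonempty. Order $\mathcal{S}$ by inclusion. If $(I_\alpha)$ is a chain in $\mathcal{S}$, set $I = \bigcup_\alpha I_\alpha$; this is an ideal of $R$, and if $g \in \Gamma \cap \GL(n,I)$ then writing $g = \id + A$ the finitely many entries of $A$ all lie in some single $I_\alpha$, so $g \in \Gamma \cap \GL(n,I_\alpha) = \{1\}$. Hence $I \in \mathcal{S}$ is an upper bound for the chain, and by Zorn's Lemma $\mathcal{S}$ has a maximal element $J$.

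By construction $\Gamma \cap \GL(n,J) = \{1\}$, so the kernel of the composite $\Gamma \hookrightarrow \GL(n,R) \to \GL(n,R/J)$ is trivial and $\Gamma$ embeds into $\GL(n,R/J)$. It remains to check that this embedding is minimal. Let $\bar{J} \neq 0$ be an ideal of $R/J$, and let $I \supsetneq J$ be its preimage in $R$, an ideal strictly containing $J$. By maximality of $J$ we have $I \notin \mathcal{S}$, so there exists $g \in \Gamma \cap \GL(n,I)$ with $g \neq 1$. Writing $g = \id + A$ with $A$ having entries in $I$, the reduction $\bar{g} = \id + \bar{A}$ has entries of $\bar{A}$ in $\bar{J}$, so $\bar{g} \in \GL(n,\bar{J})$. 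Moreover $\bar{g} \neq 1$ in $\GL(n,R/J)$, since the embedding $\Gamma \to \GL(n,R/J)$ is injective and $g \neq 1$ in $\Gamma$. Thus $\Gamma \cap \GL(n,\bar{J}) \neq \{1\}$, proving minimality.
\end{proof}

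There is essentially no obstacle here; the only point requiring a moment of care is the verification that $\mathcal{S}$ is closed under unions of chains, which uses that a matrix in $\GL(n,I)$ for $I = \bigcup I_\alpha$ already lies in $\GL(n,I_\alpha)$ for a single $\alpha$ because it has only finitely many entries. One should also note that $\GL(n,\bar{J})$ is being interpreted, as in the paragraph preceding the lemma, as the set of $\id + \bar{A}$ with $\bar{A}$ a matrix over $\bar{J}$ — i.e. the kernel of $\GL(n,R/J) \to \GL(n,(R/J)/\bar{J})$ — so that the reduction of $g$ indeed lands there.
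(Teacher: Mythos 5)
Your proof is correct and follows the same route as the paper: Zorn's Lemma applied to the poset of ideals $I$ with $\Gamma\cap\GL(n,I)=\{1\}$, taking a maximal element $J$. The paper leaves the chain-closure and minimality verifications implicit; your write-up simply supplies those details.
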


\begin{proof} Since $R$ is not necessarily noetherian,
the  proof requires Zorn Lemma.

Let $\cal S$ be the set of all ideals $J$
of $R$ such that $\Gamma\cap \GL(n,J)=\{1\}$. With respect to the inclusion, $\cal S$ is a poset.  For any chain $\cal C\subset\cal S$, the ideal 
$\cup_{I\in{\cal C}}\,I$ belongs to $\cal S$. Therefore Zorn Lemma implies that $\cal S$ contains a maximal element $J$. It follows that the induced embedding $\Gamma\to\GL(n,R/J)$ is minimal.
\end{proof}

\bigskip\noindent
{\it 2.3 Groups with trivial centralizers}

\noindent
By definition, a group $\Gamma$ 
{\it has trivial centralizers} if the centralizer of any nontrivial normal subgroup $K$ of $\Gamma$ is trivial. 
Equivalently, if $K_1$ and $K_2$  are commuting invariant subgroups of $\Gamma$, then one of them is trivial.

\begin{lemma}\label{linearity} Let  $\Gamma$ be a  group with trivial centralizers.

If $\Gamma$ is linear over a ring, 
then $\Gamma$ is also linear over a field.
\end{lemma}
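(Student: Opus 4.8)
The plan is to start from an embedding $\Gamma\subset\GL(n,R)$ over some commutative ring $R$, and to surgically replace $R$ by a field without destroying injectivity. First I would invoke Lemma~\ref{minimal} to reduce to the case where the embedding $\Gamma\subset\GL(n,R)$ is \emph{minimal}, i.e.\ every nonzero ideal $J$ of $R$ meets $\Gamma$ nontrivially in $\GL(n,J)$; passing to $R/J$ for the ideal produced by that lemma changes neither $\Gamma$ nor the hypothesis that it has trivial centralizers. We may also assume $R$ is generated as a ring by the matrix entries of the elements of $\Gamma$ and their inverses, so that $R$ is reasonably small.

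The key step is to show that under minimality the ring $R$ has no nontrivial idempotents giving a product decomposition, and more pointedly that $R$ can be arranged to be a \emph{domain}, after which we simply pass to its fraction field. To kill a prime $\fp$, note that $\GL(n,\fp)\cap\Gamma$ is a normal subgroup of $\Gamma$; likewise for another prime $\fq$ with $\fp\cap\fq$ replaced appropriately. The mechanism I expect to use: if $J_1,J_2$ are ideals with $J_1J_2=0$ (for instance $J_1=\fp$, $J_2=\mathrm{Ann}(\fp)$ when that annihilator is nonzero, or $J_1,J_2$ two distinct minimal primes in a reduced ring with $J_1\cap J_2$ nilpotent), then the subgroups $K_i:=\Gamma\cap\GL(n,J_i)$ \emph{commute}: for $g=\id+A$ with $A$ over $J_1$ and $h=\id+B$ with $B$ over $J_2$, one has $AB=BA=0$, so $gh=hg=\id+A+B$. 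By minimality both $K_i$ are nontrivial, contradicting the trivial-centralizers hypothesis. Hence $R$ has a unique minimal prime $\fn$ (its nilradical is prime); replacing $R$ by $R/\fn$ — legitimate again by minimality, since the nilradical of a finitely generated... wait, $R$ need not be Noetherian, so I must be careful: I would instead argue directly that $\Gamma\cap\GL(n,\mathrm{nil}(R))=\{1\}$, because a unipotent matrix $\id+A$ with $A$ nilpotent entries generates, together with a suitable complementary normal subgroup, a commuting pair — or more simply observe that $\mathrm{nil}(R)$ is the union of the nilpotent ideals, each of which $J$ satisfies $J^m=0$ for some $m$, giving a commuting pair $(\Gamma\cap\GL(n,J^{\lceil m/2\rceil}),\,\Gamma\cap\GL(n,J^{\lfloor m/2\rfloor}))$ — hence by minimality $\mathrm{nil}(R)=0$, i.e.\ $R$ is reduced. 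Then the zero ideal is the intersection of the minimal primes, and the same commuting-ideals argument with two distinct minimal primes $\fp_1,\fp_2$ (for which $\fp_1\cap\fp_2$ consists of zero divisors, and one can find nonzero ideals $J_i$ with $J_1J_2=0$) forces a unique minimal prime; being reduced, $R$ is then a domain. Finally, embed $R$ into its fraction field $F$ to obtain $\Gamma\subset\GL(n,R)\subset\GL(n,F)$, which is linearity over a field.

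The main obstacle I anticipate is the non-Noetherian bookkeeping: producing, from a reduced ring with $\geq 2$ minimal primes, an honest pair of nonzero ideals $J_1,J_2$ with $J_1J_2=0$ (equivalently $J_1\subset\mathrm{Ann}(J_2)$, $J_1\neq 0$), since $\mathrm{Ann}(\fp)$ can be zero when $\fp$ is not minimal and minimal primes in non-Noetherian rings are less tractable. I would handle this by localizing: if $\fp_1\neq\fp_2$ are minimal primes, pick $a\in\fp_2\setminus\fp_1$ and $b\in\fp_1\setminus\fp_2$; in a reduced ring the ideal $(a)\cap(b)$ — or rather $(ab)$ together with the observation that $ab$ lies in $\fp_1\cap\fp_2$ hence (reducedness plus every element of a minimal prime being a zero divisor) $ab$ is a zero divisor — lets one extract nonzero $J_1,J_2$ with $J_1J_2=0$ by taking $J_2=(ab)$ and $J_1=\mathrm{Ann}(ab)$, which is nonzero precisely because $ab$ is a zero divisor. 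That is the one place where I'd have to argue rather than compute; everything else (the commutator identity $gh=hg$, the reductions via Lemma~\ref{minimal}, the final passage to $\mathrm{Frac}(R)$) is routine.
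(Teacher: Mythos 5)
Your proposal is correct and follows essentially the same route as the paper: reduce to a minimal embedding via Lemma~\ref{minimal}, then observe that ideals with $J_1J_2=0$ give commuting normal subgroups $\Gamma\cap\GL(n,J_i)$, so the trivial-centralizers hypothesis plus minimality force $J_1=0$ or $J_2=0$. The only difference is that the paper stops right there: the property just established says precisely that $R$ is a prime ring, and a commutative prime ring is a domain (if $ab=0$ with $a,b\neq 0$, take $J_1=(a)$ and $J_2=(b)$), so the entire detour through the nilradical, reducedness and minimal primes --- the part you flag as the main obstacle --- is unnecessary, and $R$ embeds directly into its fraction field.
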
 

\begin{proof} By hypothesis we have
$\Gamma\subset \GL(n,R)$ for some commutative ring $R$.
By lemma \ref{minimal}, it can be assumed that the embedding 
$\Gamma\to \GL(n,R)$ is minimal. 

Let $I_1,\,I_2$ be ideals 
of $R$ with $I_1.I_2=0$. Let $A_1$ (respectively $A_2$) be
an arbitrary $n$-by-$n$ matrix with entries in $I_1$ 
(respectively in $I_2$) and set $g_1=1+A_1$, 
$g_2=1+A_2$. Since we have $A_1.A_2=A_2.A_1=0$, 
we have $g_1.g_2=g_2.g_1=1+A_1+A_2$, therefore 
$\GL(n,I_1)$ and $\GL(n,I_2)$ are commuting invariant subgroups
of $GL(n,R)$.

Since $K_1=\Gamma\cap GL(n,I_1)$ and  $K_2=\Gamma\cap GL(n,I_2)$
are commuting  invariant subgroups of $\Gamma$,  one of them is trivial. By minimality hypothesis, $I_1$ or $I_2$ is trivial.
Thus $R$ is prime.

It follows that $\Gamma\subset \GL(n,K)$, where 
$K$ is the fraction field of $R$.
\end{proof}

\bigskip\noindent
{\it 2.4 The hypothesis ${\cal H}$}

\noindent
Let  $G_1$, $G_2$ be two groups sharing a common subgroup $A$ and
 set $\Gamma=G_1*_A G_2$.

 Let $\Sigma$ be the set of all finite alternating sequences
 ${\bf \epsilon}=\epsilon_1,\dots,\epsilon_n$ of ones and twos.
For $i,\,j\in\{1,2\}$, let $\Sigma_{i,j}$ be
be the subset of all $\epsilon=(\epsilon_1,\dots,\epsilon_n)\in \Sigma$
starting with $i$ and ending with $j$ and 
let $\Gamma_{i,j}$ be the set of all $\gamma\in \Gamma$
of type $\epsilon$ for some $\epsilon\in \Sigma_{i,j}$.
Therefore we have
 
\centerline{ $\Gamma=A\sqcup \Gamma_{1,1}
 \sqcup \Gamma_{2,2}\sqcup \Gamma_{1,2}\sqcup \Gamma_{2,1}$.}

 By definition, the amalgamated product 
 $G_1*_A\,G_2$  is called {\it nontrivial} if
 $G_1\neq A$, or $G_2\neq A$. It is called 
{\it dihedral} if $G_1=G_2=\Z/2\Z$, and $A=\{1\}$,
and {\it nondihedral} otherwise.
 
 \noindent Let consider the following  hypothesis
 
 $({\cal H})$\hskip3mm For any $a\in A$ with $a\neq 1$, there is 
 $\gamma\in \Gamma$ such that
 $a^\gamma\notin A$
 
 \noindent where, as usual, $a^\gamma:=\gamma a \gamma^{-1}$.

 \begin{lemma}\label{conj} Let $\Gamma=G_1*_A\,G_2$ be a 
 nontrivial and nondihedral
 amalgamated product satisfying the hypothesis 
 ${\cal H}$.
 
 For any element $g\neq 1$ of $\Gamma$,
 there are 
 $\gamma_1,\,\gamma_2\in \Gamma$ such that
 
 \centerline {$g^{\gamma_1}\in \Gamma_{1,1}$ and
 $g^{\gamma_2}\in \Gamma_{2,2}$.}
 
\noindent In particular $\Gamma$ has trivial centralizers.
 \end{lemma}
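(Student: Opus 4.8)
The plan is to prove the statement about conjugating an arbitrary nontrivial element into $\Gamma_{1,1}$ and $\Gamma_{2,2}$ by a case analysis on the type of $g$, and then deduce the trivial centralizer property from it. First I would recall the structure theory from Lemma \ref{words}: every element of $\Gamma$ is either in $A$ or has a well-defined type $\epsilon\in\Sigma$, and the four classes $\Gamma_{i,j}$ partition $\Gamma\setminus A$. The key technical fact I will use repeatedly is the behaviour of types under multiplication: if $g$ has type starting with $i$ and $h$ has type ending with $j\ne i$ (or $h\in A$), then concatenation/cancellation is controlled, so one can predict the type of $hgh^{-1}$. In particular, conjugating by a single well-chosen element of $G_1^*$ or $G_2^*$ allows one to lengthen a word on either side and force the first and last letters to lie in prescribed factors.

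The case analysis would run as follows. Suppose first $g\notin A$, so $g$ has some type $\epsilon=(\epsilon_1,\dots,\epsilon_n)$. If $\epsilon_1=\epsilon_n$, then $g$ is already in $\Gamma_{1,1}$ or $\Gamma_{2,2}$; to reach the other one, conjugate by a suitable $s\in G_j^*$ with $j\ne\epsilon_1$: then $sgs^{-1}$ has type beginning and ending with $j$ provided no total cancellation occurs, which one arranges by choosing $s$ outside the relevant coset (here nondihedrality and nontriviality guarantee enough room: at least one of $G_1,G_2$ has an $A$-coset beyond the identity one, and in the dihedral-excluded setting we can always find the needed representatives). If $\epsilon_1\ne\epsilon_n$, i.e. $g\in\Gamma_{1,2}\cup\Gamma_{2,1}$, then a one-sided conjugation by $s\in G_{\epsilon_n}^*$ (appropriately chosen) produces an element whose type starts with $\epsilon_1$ and ends with $\epsilon_1$, landing in the right class, and symmetrically on the other side. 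The remaining case is $g\in A\setminus\{1\}$: here I invoke hypothesis $({\cal H})$ to get $\gamma$ with $a^\gamma\notin A$, reducing to the already-treated case of an element not in $A$ — apply the previous paragraph to $g^\gamma$ and compose conjugators.

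For the last sentence, \emph{trivial centralizers}: suppose $K_1,K_2\trianglelefteq\Gamma$ commute and both are nontrivial; pick $g\in K_1\setminus\{1\}$ and $h\in K_2\setminus\{1\}$. By the main part of the lemma, after conjugating (which keeps elements inside the respective normal subgroups) we may assume $g\in\Gamma_{1,1}$; likewise $h$ can be conjugated into $\Gamma_{2,2}$, but more usefully we can put $h$ into a type that is incompatible with commuting with $g$. The standard argument: two elements of an amalgamated product commute only in constrained ways (e.g. both conjugate into a common factor, or into a cyclic subgroup), and elements genuinely of type $\Gamma_{1,1}$ and $\Gamma_{2,2}$ cannot commute — computing the type of $gh$ versus $hg$ gives a contradiction, since one gets distinct reduced words representing the same element. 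Hence one of $K_1,K_2$ is trivial, which is exactly the trivial-centralizer condition as reformulated in 2.3.

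The main obstacle I expect is the careful bookkeeping in the conjugation steps: ensuring that when we multiply by $s$ on the left and $s^{-1}$ on the right, the first and last letters of the resulting reduced word are what we claim, with no unexpected collapse in the middle or at the ends. This requires checking subcases according to whether $\epsilon_1$ (resp. $\epsilon_n$) equals the factor $s$ lives in, and verifying that in the nontrivial nondihedral case there are always enough coset representatives to choose $s$ so that the relevant product $x_1 s$ (or $x_n s^{-1}$) does not land back in $A$. Hypothesis $({\cal H})$ is precisely what handles the otherwise-problematic starting case $g\in A$, so the logical skeleton is clean; it is the type-arithmetic lemmas that carry the weight, and I would isolate them as a short sublemma before the case analysis.
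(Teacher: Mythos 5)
Your proposal is correct and follows essentially the same route as the paper: reduce to landing in $\Gamma_{1,1}\cup\Gamma_{2,2}$ via the type-multiplication rules, handle $g\in A$ by hypothesis $({\cal H})$, and get trivial centralizers by comparing the types of $gh$ and $hg$. The only point needing care is the one you flag yourself: in the mixed case $g\in\Gamma_{1,2}\cup\Gamma_{2,1}$ the conjugator must be taken in whichever factor satisfies $[G_i:A]\ge 3$ (which exists since $\Gamma$ is nondihedral and satisfies $({\cal H})$), so that the end letter does not collapse into $A$ — and then the resulting word begins and ends with that index $i$, not with $\epsilon_1$ as written.
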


 \begin{proof}

  First it should be noted that $A$ cannot be 
 simultaneously a subgroup of index 2 in $G_1$ and in $G_2$.
 Otherwise, $A$ would be an invariant subgroup of $G_1$ and $G_2$,
 the hypothesis  ${\cal H}$ would imply that $A=\{1\}$
 and $\Gamma$ would be the dihedral group.
  Hence it can be assumed that $G_2/A$ contains
 at least $3$ elements. 
 
 Next it is clear that
 $G^*_i.\Gamma_{j,k}\subset \Gamma_{i,k}$
  whenever $i\neq j$. Similarly, we have
 $\Gamma_{i,j}.G^*_k\subset \Gamma_{i,k}$
 whenever $j\neq k$.

 Now we prove the first claim, namely that the conjugacy class of any $g\neq 1$ intersects both $\Gamma_{1,1}$ and $\Gamma_{2,2}$.

By hypothesis, $G^*_1$ is not empty. Let 
 $\gamma_1\in G^*_1$. We have 
 $\Gamma_{2,2}^{\gamma_1}\subset\Gamma_{1,1}$. Simlarly we have
 $\Gamma_{1,1}^{\gamma_2}\subset\Gamma_{2,2}$ for some
 $\gamma_2\in G_2^*$. Therefore the claim is proved for
 any $g\in \Gamma_{1,1}\cup \Gamma_{2,2}$. Moreover
 it is now enough to prove that the conjugacy class of any
 $g\neq 1$ intersects 
 $\Gamma_{1,1}$ or $\Gamma_{2,2}$. 
 
 Assume now $g\in \Gamma_{2,1}$. We have $g=u.v$ for some
 $u\in G^*_2$ and $v\in\Gamma_{1,1}$. Since 
 $[G_2:A]\geq 3$, there is $\gamma\in G^*_2$ such that
 $\gamma.u$ belongs to $G^*_2$. It follows that
 $\gamma.g$ belongs to $\Gamma_{2,1}$, and therefore
 $g^\gamma$ belongs to $\Gamma_{2,2}$. 
 
 Next, for $g\in \Gamma_{1,2}$, it inverse
 $g^{-1}$ belongs to $\Gamma_{1,2}$ and
  the claim follows from the previous point.
 
 Last, for $g\in A$,  there is $\gamma\in \Gamma$ such that $g^\gamma$ is not in $A$, by hypothesis ${\cal H}$. Thus $g^\gamma$ belongs to $\Gamma_{i,j}$  for some
 $i,\,j$. So $g$ is conjugate to some element in
 $\Gamma_{1,1}\cup\Gamma_{2,2}$ by the previous considerations.
 
 Next we prove that $\Gamma$ has trivial centralizers.
 Let $K_1,\,K_2$ be nontrivial invariant subgroups. By the previous point, there are elements $g_1,\,g_2$ with
 
 \centerline{$g_1\in K_1\cap \Gamma_{1,1}$ and
 $g_2\in K_2\cap \Gamma_{2,2}$.}
 
\noindent Since we have $g_1 g_2\in \Gamma_{1,2}$ and $g_2 g_1\in \Gamma_{2,1}$, it follows that $g_1g_2\neq g_2g_1$. Therefore
$K_1$ and $K_2$ do not commute.
 
\end{proof}

 As an obvious corollary of Lemmas \ref{linearity} and
\ref{conj}, we get

\begin{lemma}\label{criterion} Let $\Gamma=G_1*_A\,G_2$ be a 
nontrivial amalgamated product satisfying the hypothesis ${\cal H}$.

If $\Gamma$ is linear over a ring, then it is linear over a field.
\end{lemma}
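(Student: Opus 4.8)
The plan is to obtain this as the advertised corollary of Lemmas \ref{linearity} and \ref{conj}, the only subtlety being that Lemma \ref{conj} carries the extra hypothesis \emph{nondihedral}, so the dihedral amalgamated product has to be handled separately by hand.

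First I would split into two cases according to whether the nontrivial amalgamated product $\Gamma=G_1*_A G_2$ is dihedral or not. Suppose it is nondihedral. Then $\Gamma$ is a nontrivial, nondihedral amalgamated product satisfying $({\cal H})$, so Lemma \ref{conj} applies and tells us that $\Gamma$ has trivial centralizers. Lemma \ref{linearity} then immediately gives that if $\Gamma$ is linear over a ring it is linear over a field, which is exactly the assertion.

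Now suppose $\Gamma$ is dihedral. By definition this means $G_1=G_2=\Z/2\Z$ and $A=\{1\}$, so $\Gamma\cong \Z/2\Z*\Z/2\Z$ is the infinite dihedral group. This group is already linear over $\Z$: the two generating involutions may be sent to $\left(\begin{smallmatrix}-1&0\\0&1\end{smallmatrix}\right)$ and $\left(\begin{smallmatrix}-1&1\\0&1\end{smallmatrix}\right)$ in $\GL(2,\Z)$, whose product $\left(\begin{smallmatrix}1&-1\\0&1\end{smallmatrix}\right)$ has infinite order, so this assignment is an isomorphism onto its image (equivalently, $\Gamma\cong\Z\rtimes\Z/2\Z$ acts faithfully by affine transformations of the line over $\Q$). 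Hence $\Gamma$ is linear over $\Q$, and the conclusion of the lemma holds unconditionally in this case — the hypothesis that $\Gamma$ be linear over a ring is in fact superfluous here.

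I do not expect any real obstacle: the argument is a routine case split, and the only thing to be careful about is not overlooking that the dihedral product is excluded from Lemma \ref{conj} and must therefore be dispatched directly, as above.
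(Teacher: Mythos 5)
Your proposal is correct and follows essentially the same route as the paper: dispose of the dihedral case by noting the infinite dihedral group is linear over a field, then combine Lemma \ref{conj} (trivial centralizers) with Lemma \ref{linearity} in the nondihedral case. Your explicit matrices for the dihedral case merely spell out what the paper asserts in one line.
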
 

\begin{proof} The dihedral group is linear over a field.
Hence it can be assumed that the amalgamated product 
$\Gamma=G_1*_A\,G_2$ is also nondihedral. Thus
the lemma is an obvious corollary of Lemmas \ref{linearity} and \ref{conj}.

\end{proof}

\section{A nonlinear FG subgroup of $\Aut_0\,\Q^2$.}

First we define a certain FG group $\Gamma=G_1*_A G_2$ and we will show that $\Gamma$ is nonlinear, even over a ring. Then we see that $\Gamma$ is
a subgroup of $\Aut_0\,\Q^2$ and therefore $\Aut_0\,K^2$ contains
many nonlinear FG subgroups for any characteristic zero field 
$K$, what proves Theorem A.2.

\bigskip
\noindent {\it 3.1 Quasi-unipotent endomorphisms}

\noindent Let $V$ be a finite dimensional vector space over an algebraically closed field $K$. An element $u\in \GL(V)$ is called
{\it quasi-unipotent} if all its eigenvalues are roots of unity.
The {\it quasi-order} of a quasi-unipotent endomorphism
$u$ is the smallest positiver integer $n$ such that $u^n$ is unipotent.

If $u$ is unipotent and $\ch\,K=0$, set

\centerline
{$\log\,u=\log(1-(1-u)):=\sum_{m\geq 1} \,(1-u)^m/m$,}

\noindent which is well-defined since $1-u$ is nilpotent.

\begin{lemma}\label{unipotent} Let $h, u\in \GL(V)$.
Assume that $u$ has infinite order and
$u^h=u^{2^k}$ for some $k\geq 1$.
Then  $u$ is quasi-unipotent of odd quasi-order $n$.

Moreover $K$  has characteristic zero and
$e^h=2^k e$, where $e=\log\,u^n$.

\end{lemma}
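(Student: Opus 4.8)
The plan is to analyze the eigenvalues of $u$ using the relation $u^h = u^{2^k}$, which says that conjugation by $h$ sends the spectrum of $u$ to the set of $2^k$-th powers of its eigenvalues. First I would pass to the algebraic closure (already assumed) and let $\lambda_1, \dots, \lambda_r$ be the distinct eigenvalues of $u$. Since $u$ and $u^h$ are conjugate, they have the same characteristic polynomial; but the eigenvalues of $u^h = u^{2^k}$ are $\lambda_1^{2^k}, \dots, \lambda_r^{2^k}$ (with matching multiplicities once one also tracks Jordan block sizes). Hence the map $\lambda \mapsto \lambda^{2^k}$ permutes the finite set $\{\lambda_1, \dots, \lambda_r\}$. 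Iterating, each $\lambda_i$ satisfies $\lambda_i^{(2^k)^N} = \lambda_i$ for some $N \geq 1$, so $\lambda_i^{(2^k)^N - 1} = 1$ and every eigenvalue is a root of unity. This shows $u$ is quasi-unipotent; let $n$ be its quasi-order, so the eigenvalues are precisely the $n$-th roots of unity that occur, and $u^n$ is unipotent.

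Next I would pin down that $n$ is odd and that $\ch K = 0$. The permutation $\lambda \mapsto \lambda^{2^k}$ of the eigenvalue set must also respect Jordan block structure (conjugate matrices have the same Jordan type), and in particular it must be a \emph{bijection} of the eigenvalue multiset. If some eigenvalue is a primitive $d$-th root of unity with $d$ even, then raising to the $2^k$-th power is not injective on the group of $d$-th roots of unity when $\gcd(2^k, d) > 1$; more carefully, since the eigenvalue set is a union of full Galois orbits and the squaring-type map must permute it, one deduces $\gcd(2, n) = 1$, i.e. $n$ is odd. (Here the infinite order of $u$ forces $u$ to be non-semisimple if $\ch K = 0$, or to have an eigenvalue of infinite multiplicative order — impossible since all are roots of unity — so the non-semisimplicity is where infinite order is spent; in particular $u^n \neq 1$.) For the characteristic: if $\ch K = p > 0$, then any unipotent element has finite order (a $p$-power), so $u^n$ would have finite order, hence $u$ would have finite order, contradicting the hypothesis. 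Therefore $\ch K = 0$.

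Finally, with $\ch K = 0$ established, set $e = \log u^n$, which is well-defined and nilpotent since $u^n$ is unipotent, and $e \neq 0$ because $u^n \neq 1$. The relation $u^h = u^{2^k}$ gives $(u^n)^h = (u^h)^n = u^{2^k n} = (u^n)^{2^k}$. Now I would apply $\log$ to both sides: since $\log$ is a conjugation-equivariant, natural map on unipotents (it is defined by a formal power series in $1 - u^n$), we get $(\log u^n)^h = \log((u^n)^h) = \log((u^n)^{2^k}) = 2^k \log(u^n)$, using $\log(v^m) = m \log v$ for unipotent $v$ (valid in characteristic $0$). That is exactly $e^h = 2^k e$.

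\textbf{Main obstacle.} The one delicate point is the passage from ``$u^h$ and $u$ are conjugate'' to ``$\lambda \mapsto \lambda^{2^k}$ is a bijection of the eigenvalue multiset and respects Jordan type,'' and extracting from this both that all eigenvalues are roots of unity \emph{and} that $n$ is odd; the oddness in particular requires care, because $\lambda \mapsto \lambda^{2^k}$ being merely surjective onto a finite root-of-unity set is automatic, and one must use that it is also \emph{injective} (equivalently, that the multiset is preserved) to rule out even order. Everything after $\ch K = 0$ is established — the logarithm computation — is routine functoriality of $\log$ on unipotents.
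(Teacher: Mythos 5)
Your proof is essentially the paper's proof: surjectivity of $\lambda\mapsto\lambda^{2^k}$ on the finite spectrum gives a bijection, iteration gives $\lambda^{(2^k)^N-1}=1$, positive characteristic is excluded because unipotents there have finite order, and the last identity follows from functoriality of $\log$ on unipotents. One remark, though: your separate argument for oddness of the quasi-order (via ``the eigenvalue set is a union of full Galois orbits and the squaring map must permute it'') is both unnecessary and not sound as stated --- the spectrum of an arbitrary $u\in\GL(V)$ over an algebraically closed field need not be Galois-stable, and injectivity of $\lambda\mapsto\lambda^{2^k}$ on the spectrum does not by itself rule out even-order eigenvalues lying in the spectrum. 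The clean point, which is the paper's one-line argument and which you already have in hand, is that the exponent $(2^k)^N-1$ in your identity $\lambda^{(2^k)^N-1}=1$ is \emph{odd}, so every eigenvalue is an odd root of unity and the quasi-order (the lcm of their orders) is odd. With that substitution the rest of what you wrote goes through; the ``main obstacle'' you flag largely dissolves, since a surjective self-map of a finite set is automatically a bijection and no control of Jordan types is needed.
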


\begin{proof} Let $\Spec\,u$ be the spectrum of $u$.
By hypothesis the  map 
$\Spec\,u\to \Spec\,u, \lambda\mapsto\lambda^{2^k}$ is bijective,
hence for any $\lambda\in\Spec\,u$, we have 
$\lambda^{2^{kn}}=\lambda$ for some integer $n$. It follows that all 
eigenvalues are odd roots of unity, what proves that $u$ is
quasi-unipotent of odd quasi-order.

Over any field of finite characteristic, the unipotent
endomorphisms have finite order. Hence we have $\ch\,K=0$. The last assertion comes from the fact that
$\log$ induces a bijection from unipotent elements to
nilpotent elements.

\end{proof}

\noindent {\it 3.2 The Group $\Gamma=G_1*_A G_2$}

\noindent Set $G_1=\Z^2$, and let $\sigma, \sigma'$ be a basis $\Z^2$.
Set $\Z_{(2)}=\{x\in\Q\vert 2^n x\in\Z$ for $n>>0\}$.
Indeed $Z_{(2)}$ is the localization of the ring $\Z$ at $2$, but,
in what follows, we will only consider its group structure. The element $1$ in $Z_{(2)}$ will be denoted by $\tau$
and the addition in $Z_{(2)}$ will be  denoted multiplicatively.

Let $\sigma$ act on $\Z_{(2)}$ by multiplication by $2$, so we can consider the semi-direct product $G_2=\Z.\sigma\ltimes \Z_{(2)}$.
Let $\Gamma=G_1*_A G_2$, where $A=\Z\sigma$. It is easy to show
that $\Gamma$ is generated by $\sigma,\sigma'$ and $\tau$, and 
it is defined by the following two relations

\centerline{$\sigma\sigma'=\sigma'\sigma$, and
$\sigma \tau \sigma^{-1}=\tau^2$.}

\begin{lemma}\label{prepa1} The group
$\Gamma$ has trivial centralizers.
\end{lemma}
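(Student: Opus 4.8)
The natural strategy is to verify that $\Gamma = G_1 *_A G_2$ meets the hypotheses of Lemma~\ref{conj}, and then invoke that lemma. So the plan is first to check that the amalgamated product is nontrivial and nondihedral, and then to establish the hypothesis $({\cal H})$: for every $a \in A$ with $a \neq 1$, there exists $\gamma \in \Gamma$ with $a^\gamma \notin A$. Here $A = \Z\sigma$ is infinite cyclic, so a nontrivial $a$ has the form $\sigma^m$ with $m \neq 0$.

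\textbf{Checking nontriviality and nondihedrality.} This is immediate: $G_1 = \Z^2 \neq A = \Z\sigma$ (indeed $G_1/A \cong \Z$ is infinite), so the product is nontrivial, and since $G_1$ is infinite it is certainly not $\Z/2\Z$, so the product is nondihedral.

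\textbf{Checking $({\cal H})$.} This is the main point. Take $a = \sigma^m$ with $m \neq 0$, and conjugate by $\gamma = \tau \in G_2$. Using the defining relation $\sigma\tau\sigma^{-1} = \tau^2$ inside $G_2 = \Z\sigma \ltimes \Z_{(2)}$, one computes $\sigma^m \tau \sigma^{-m} = \tau^{2^m}$ when $m > 0$ (and $= \tau^{2^m}$ interpreted in $\Z_{(2)}$ when $m < 0$, which is still a nontrivial element of $\Z_{(2)}$ since $2^m$ is invertible there). Hence $\tau \sigma^m \tau^{-1} = \sigma^m \cdot (\sigma^{-m}\tau\sigma^m)\cdot\tau^{-1} = \sigma^m \tau^{2^{-m}}\tau^{-1} = \sigma^m \cdot \tau^{2^{-m}-1}$. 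Since $m \neq 0$, the exponent $2^{-m} - 1$ (computed in $\Z_{(2)}$, written additively there but multiplicatively in our notation) is nonzero, so $\tau^{2^{-m}-1}$ is a nontrivial element of $\Z_{(2)}$, hence lies in $G_2^* = G_2 \setminus A$. Therefore $\sigma^m \cdot \tau^{2^{-m}-1}$ is a reduced word whose $G_2$-syllable is nontrivial modulo $A$, so it does not lie in $A$; that is, $a^\tau \notin A$. I would double-check the exponent arithmetic and the direction of the conjugation carefully, since sign errors in $2^{\pm m}$ are the one place this can go wrong, but the conclusion is robust: conjugating any nontrivial power of $\sigma$ by $\tau$ produces something outside $A$ because $\tau$ and $\sigma^m \tau \sigma^{-m}$ are distinct elements of $\Z_{(2)}$.

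\textbf{Conclusion.} With $({\cal H})$ verified and the product nontrivial and nondihedral, Lemma~\ref{conj} applies directly and yields that $\Gamma$ has trivial centralizers. The main obstacle is purely the bookkeeping in the conjugation computation inside $G_2$ — making sure that $2^{\pm m} - 1 \neq 0$ in $\Z_{(2)}$ for $m \neq 0$, which holds because $\Z_{(2)}$ is torsion-free and $2$ is not a root of unity — and correctly reading off that the resulting element has a nontrivial second syllable and hence escapes $A$.
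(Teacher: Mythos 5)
Your proof is correct and follows essentially the same route as the paper: both reduce the statement to verifying hypothesis $({\cal H})$ and then invoke Lemma \ref{conj}, with the verification carried out by conjugating $\sigma^m$ by $\tau$ (the paper uses $\tau^{-1}$, an immaterial difference) and observing that the resulting element of $G_2=\Z\sigma\ltimes\Z_{(2)}$ has nonzero $\Z_{(2)}$-component, hence lies outside $A$. Your explicit check that the amalgam is nontrivial and nondihedral is a welcome detail the paper leaves implicit.
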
 

\begin{proof} By Lemma \ref{criterion}, it is enough to
prove that the amalgamated product
$\Gamma=G_1*_A G_2$ satisfies the hypothesis ${\cal H}$.

Indeed we claim that, for any 
element $a\in A\setminus 1$, its conjugate 
$\tau^{-1} a \tau$ is not in $A$.
We have $a=\sigma^n$ for some $n\neq 0$. 

First assume that $n\geq 1$.
We have $\sigma^n \tau\sigma^{-n}=\tau^{2^n}$ and therefore
$\tau^{-1} a \tau=\tau^{2^n-1} a$ what proves that
$\tau^{-1} a \tau$ is not in $A$. Moreover
if $n\leq -1$, it follows that $\tau^{-1} a^{-1} \tau$ is not in $A$, therefore  $\tau^{-1} a \tau$ is not in $A$.

Hence the amalgamated product
$\Gamma=G_1*_A G_2$ satisfies hypothesis ${\cal H}$. 
\end{proof}

\noindent{\it 3.3 Nonlinearity of $\Gamma$}

\begin{lemma} \label{nonlinear}

The group $\Gamma$ is not linear, even over a ring.

\end{lemma}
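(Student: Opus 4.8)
The plan is to argue by contradiction: assume $\Gamma$ is linear over a ring and produce an impossible configuration inside a single faithful representation over an algebraically closed field. Since $\Gamma$ has trivial centralizers by Lemma~\ref{prepa1}, Lemma~\ref{linearity} shows that $\Gamma$ embeds in $\GL(n,K)$ for some field $K$, which I may take algebraically closed. Now $\tau$ has infinite order in $\Gamma$ — the natural map $G_2\to\Gamma$ is injective and $\tau$ generates a copy of $\Z$ in $G_2=\Z.\sigma\ltimes\Z_{(2)}$ — and $\sigma\tau\sigma^{-1}=\tau^2$, so Lemma~\ref{unipotent} applied with $u=\tau$, $h=\sigma$ and $k=1$ gives: $\ch K=0$, the element $\tau$ is quasi-unipotent of (odd) quasi-order $m$, and $e:=\log(\tau^m)$ is a nonzero nilpotent matrix with $\sigma e\sigma^{-1}=2e$.

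The key idea is that $\tau^m$ and its conjugate $\sigma'\tau^m\sigma'^{-1}$ can then be forced into a single solvable subgroup of $\GL(n,K)$, which is impossible because they generate a non-abelian free group in $\Gamma$. Put $e'=\sigma'e\sigma'^{-1}$, and let $c_\sigma\colon X\mapsto\sigma X\sigma^{-1}$ be conjugation on $\mathfrak{gl}_n(K)$, a Lie algebra automorphism. Then $c_\sigma(e)=2e$ by hypothesis, and $c_\sigma(e')=\sigma\sigma'e\sigma'^{-1}\sigma^{-1}=\sigma'(\sigma e\sigma^{-1})\sigma'^{-1}=2e'$ because $\sigma'$ commutes with $\sigma$; thus $e,e'\in\ker(c_\sigma-2)$. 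Let $\mathfrak{h}\subset\mathfrak{gl}_n(K)$ be the Lie subalgebra generated by $e$ and $e'$. Since $c_\sigma[X,Y]=[c_\sigma X,c_\sigma Y]$, every $k$-fold iterated bracket of $e$'s and $e'$'s is a $c_\sigma$-eigenvector of eigenvalue $2^k$; hence $\mathfrak{h}=\bigoplus_{k\ge1}\mathfrak{h}_k$ with $\mathfrak{h}_k:=\mathfrak{h}\cap\ker(c_\sigma-2^k)$ (the sum is direct, being a sum of eigenspaces for distinct eigenvalues). This is a grading by positive integers with only finitely many nonzero pieces, so $\mathfrak{h}$ is a nilpotent Lie algebra. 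Moreover every $X\in\mathfrak{h}$ is a nilpotent matrix: writing $X=\sum_k X_k$ with $X_k\in\mathfrak{h}_k$, for each integer $j\ge1$ the matrix $\sigma^{-j}X\sigma^{j}=\sum_k s^kX_k$, $s=2^{-j}$, is conjugate to $X$, so the polynomial identity $\det(T\,\Id-X)=\det(T\,\Id-\sum_k s^kX_k)$ holds for infinitely many $s$, hence for all $s$, and at $s=0$ it reads $\det(T\,\Id-X)=T^n$. Therefore $N:=\exp(\mathfrak{h})$ is a unipotent — in particular solvable — subgroup of $\GL(n,K)$ containing $\exp(e)=\tau^m$ and $\exp(e')=\sigma'\tau^m\sigma'^{-1}$.

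It remains to see that $\langle\tau^m,\sigma'\tau^m\sigma'^{-1}\rangle$ is not solvable. Applying Lemma~\ref{subamal}(i) with $G_1'=\langle\sigma'\rangle$, $G_2'=\langle\tau\rangle$ and $A'=\{1\}$ — note $\langle\sigma'\rangle\cap A=\langle\tau\rangle\cap A=\{1\}$ inside $\Gamma=G_1*_AG_2$ — the subgroup $\langle\sigma',\tau\rangle$ of $\Gamma$ is the free product $\langle\sigma'\rangle*\langle\tau\rangle$, i.e. free of rank $2$. Inside this free group $\tau^m$ and $\sigma'\tau^m\sigma'^{-1}$ do not commute, since $\tau^m\cdot\sigma'\tau^m\sigma'^{-1}$ and $\sigma'\tau^m\sigma'^{-1}\cdot\tau^m$ are distinct reduced words; so they generate a non-abelian subgroup of a free group, which is free of rank $\ge2$ and hence not solvable. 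This contradicts their lying in the solvable group $N$, and therefore $\Gamma$ is not linear, even over a ring.

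The step I expect to be the real obstacle is the middle one: recognising that one should examine $\tau^m$ together with $\sigma'\tau^m\sigma'^{-1}$ and assemble the $c_\sigma$-weight-$2^k$ spaces ($k\ge1$) into one nilpotent Lie algebra of nilpotent matrices — the crux being that $\{2^k:k\ge1\}$ is a subsemigroup of $K^*$ avoiding $1$, which makes the grading genuinely positive and the Lie algebra nilpotent of nilpotent matrices — thereby trapping both elements in a common solvable group. The reduction to an algebraically closed field, the input of Lemma~\ref{unipotent}, and the free-product identification are then routine.
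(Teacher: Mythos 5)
Your proof is correct, and while it opens exactly as the paper's does, it closes differently. Both arguments reduce to linearity over an (algebraically closed) field via Lemmas \ref{prepa1} and \ref{linearity}, and both extract from Lemma \ref{unipotent} that $\ch K=0$ and that $e=\log\rho(\tau^m)$ satisfies $\rho(\sigma)e\rho(\sigma)^{-1}=2e$. From there the paper decomposes the representation space $V$ into generalized eigenspaces of $\rho(\sigma)$, shows the subspaces $V_{(\lambda)}^+=\oplus_{n\ge0}V_{(2^n\lambda)}$ are $\Gamma$-stable with abelian action on successive quotients, concludes that all of $\Gamma$ is solvable, and contradicts the trivial-centralizer property. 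You instead decompose $\End(V)$ under $\Ad\,\rho(\sigma)$, use the commuting generator $\sigma'$ to manufacture a second eigenvector $e'=\sigma'e\sigma'^{-1}$ of eigenvalue $2$, and exploit the positivity of the $2^k$-grading to trap $\rho(\tau^m)=\exp(e)$ and $\rho(\sigma'\tau^m\sigma'^{-1})=\exp(e')$ in a common unipotent, hence solvable, subgroup; the contradiction is then with the fact (via Lemma \ref{subamal}(i)) that $\tau^m$ and $\sigma'\tau^m\sigma'^{-1}$ generate a free group of rank $2$. Your route buys a more localized contradiction --- you never need to triangularize all of $\rho(\Gamma)$, nor to replace $\tau$ by $\tau^n$ through an isomorphism of $\Gamma$ --- at the cost of the characteristic-polynomial argument showing every element of the graded Lie algebra $\mathfrak{h}$ is a nilpotent matrix, which is correct as written. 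One presentational point: rather than asserting that $\exp(\mathfrak{h})$ is itself a subgroup, it is cleaner to invoke Engel's theorem to strictly triangularize $\mathfrak{h}$ simultaneously, so that $\exp(e)$ and $\exp(e')$ visibly lie in the (nilpotent, hence solvable) group of unipotent upper-triangular matrices; this does not affect the validity of the argument.
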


\begin{proof} 
Assume that $\Gamma$ is linear over a ring.
By Lemma \ref{prepa1} the group $\Gamma$
has trivial centralizers. Thus, by Lemma \ref{linearity},
$\Gamma$ is linear over a field.

Let $\rho:\Gamma\to \GL(V)$ be an embedding, where $V$ is a finite dimensional vector space over a field $K$.
Since $\sigma \tau \sigma^{-1}=\tau^2$ it follows from 
Lemma \ref{unipotent} that $\rho(\tau)$ is quasi-unipotent and its
quasi-order $n$ is odd.  

Let $\Gamma'$ be the subgroup of $\Gamma$ generated by
$\sigma,\,\sigma'$ and $\tau^n$. Since the morphism
$\psi: \Gamma\to\Gamma'$ defined by $\psi(\sigma)=\sigma$,
$\psi(\sigma')=\sigma'$ and $\psi(\tau)=\tau^n$ is an isomorphism,
it can be can assumed that $\rho(\tau)$ is unipotent.

By Lemma \ref{unipotent},
$K$ has characteristic zero. Set
$u=\rho(\tau)$, $h=\rho(\sigma)$ and 
$e=\log\,u$.

By Lemma \ref{unipotent}, we have 

\centerline{$h e h^{-1}=2e$.}

It can be assumed that $K$ is algebraically closed.
Let $V=\oplus_{\lambda\in K}\, V_{(\lambda)}$ be the decomposition of $V$ into generalized eigenspaces relative to $h$. Also, for 
$\lambda\in K$, set
$V_{(\lambda)}^+=\oplus_{n\geq 0}V_{(2^n\lambda)}$. 

Let $\lambda\in K$. Since
$\rho(G_1)$ commutes with $h$ we have 

\centerline{$\rho(G_1)V_{(\lambda)}\subset V_{(\lambda)}$.}

\noindent  Moreover we have
$e.V_{(\lambda)}\subset V_{(2 \lambda)}$ and therefore

\centerline{$\rho(G_2)V_{(\lambda)}^+\subset V_{(\lambda)}^+$.}

\noindent It follows that $V_{(\lambda)}^+$ is a $\Gamma$-module.
Since $\tau$ acts trivially on $V_{(\lambda)}^+/V_{(2\lambda)}^+$,
the image of $\Gamma$ in 
$\GL(V_{(\lambda)}^+/V_{(2\lambda)}^+)$ is commutative. It follows that $V$ has a composition series by one-dimensional $\Gamma$-module, hence
the group $\Gamma$ is solvable.

However a solvable group  contains a nontrivial invariant abelian subgroup. 
This contradicts  that $\Gamma$ has trivial
centralizers by Lemma \ref{prepa1}. 
\end{proof}

\noindent{\it 3.4 Proof of Theorem A.2}

\begin{MainA2} Let $K$ be a field of characteristic zero. Then
$\Aut_0\,K^2$ contains FG subgroups which are not linear even over a ring.
\end{MainA2}

\begin{proof} Let define three polynomial automorphisms
$S, S'$ and $T$ of the plane as follows. First 
$S$ and $S'$ are linear automorphisms  where
$S=1/2\,\id$ and $S'$ is defined by the matrix
$\begin{pmatrix}
1 & 1 \\
1 & 0
\end{pmatrix}$.  Next $T$ is the quadratic automorphism
$(x,y)\mapsto (x, y+x^2)$. Set $K_1=<S,S'>$,
 $K_2=<S,T>$, $C=<S>$.

The eigenvalues of $S'$ are $1\pm\sqrt{5}\over 2$. Hence
$S'^n$ is not upper diagonal for $n\neq 0$. It follows that
 $K_1\cap B_0=C$. Moreover $K-2$ is the groups of automorphisms of the form
 
\centerline{ $(x,y)\mapsto (2^k x, ry)$,}

\noindent for $k\in\Z$ and  $r\in \Z_{(2)}$, therefore
 $K_2\cap B_0=C$. It follows from Lemma \ref{subamal}
 and  van der Kulk Theorem that
 $K_1*_C K_2$ is a subgroup of $\Aut\,K^2$. Moreover it is clear that $K_1*_C K_2\subset \Aut_0\,K^2$.
 
 There is a
 group isomorphism $\Gamma\to K_1*_C K_2$ sending
 $\sigma$ to $S$, $\sigma'$ to $S'$ and $\tau$ to $T$. 
 Therefore, by Lemma \ref{nonlinear}, the FG subgroup
$K_1*_C K_2\simeq \Gamma$ in $\Aut_0\,K^2$ is not linear, even over a ring.

\end{proof}

\section{Semi-algebraic characters.}

Let $K$ be an infinite field of characteristic $p$. It is shown in this section that the 
degree of any semi-algebraic character is well-defined,
see Lemma \ref{degree}.

\begin{lemma}\label{span} Let $n\geq 1$ be an integer prime to $p$. Then for any $x\in K$, there is an integer $m$ and a collection $x_1,\,x_2\dots x_m$ of elements of $K^*$ such that

\centerline{$x=\sum_{1\leq k\leq m}\,x_i^n$.}

\end{lemma}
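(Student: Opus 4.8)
\textbf{Proof proposal for Lemma \ref{span}.}

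The plan is to show that the subgroup of the additive group $(K,+)$ generated by the set $\{x^n : x \in K^*\}$ of nonzero $n$-th powers is all of $K$; since every difference of two $n$-th powers can be rewritten as a sum using $(-1)^n$ (which is itself an $n$-th power when $n$ is odd, and when $n$ is even one argues slightly differently, or one simply absorbs signs into the final tally and re-expands), this yields the claimed representation $x = \sum_k x_k^n$ with all $x_k \in K^*$ — possibly after padding with cancelling pairs $y^n + (-y)^n$ or, if $n$ is even, observing directly that the additive span is what matters and a sum with integer coefficients of $n$-th powers is again literally a sum of $n$-th powers after collecting terms. So the real content is: \emph{the additive span of $n$-th powers in $K$ is $K$}, given $\gcd(n,p) = 1$ (with $p = \ch K$, possibly $0$) and $K$ infinite.

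First I would reduce to showing that $1$ lies in this additive span, since the span is stable under multiplication by any $n$-th power $c^n$ (for $c \in K^*$), and in particular, once $1 = \sum a_i^n$, we get $x^n = \sum (x a_i)^n$ for all $x$, hence $x \cdot (\text{span}) \subset \text{span}$ once we know $x$ is an $n$-th power — but to handle arbitrary $x$ I instead argue that the span $S$ is an additive subgroup containing $1$ and closed under multiplication by $K^*$ whenever $K^*$ consists of $n$-th powers, which need not hold. The cleaner route: consider the polynomial identity coming from the binomial expansion of $(t+1)^n - t^n$, which is a polynomial of degree $n-1$ in $t$ with leading coefficient $n$. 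Since $n$ is prime to $p$, this leading coefficient is invertible. Evaluating at many values of $t$ and taking $K$-linear combinations, I claim the $K$-span of $\{(t+1)^n - t^n : t \in K\}$ — each such element being a difference, hence (up to the sign issue above) a sum, of $n$-th powers — contains every polynomial expression of degree $\le n-1$ evaluated suitably; more concretely, since $K$ is infinite, the values $t^j$ for varying $t$ span, via Vandermonde, and one extracts that $1$ is in the span.

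The technically cleanest argument, which is the one I would actually write: induct on $n$. The base case $n=1$ is trivial. For the inductive step, the expansion $(t+1)^n = \sum_{j=0}^{n} \binom{n}{j} t^j$ shows $(t+1)^n - t^n - 1 = \sum_{j=1}^{n-1}\binom{n}{j} t^j$ lies in the additive span $S$ of $n$-th powers for every $t \in K^*$ with $t+1 \in K^*$ (all but at most one $t$). Dividing out, $\binom{n}{1} t + \binom{n}{2} t^2 + \cdots \in S$; specializing $t$ to enough distinct values and solving the resulting Vandermonde system — legitimate since $K$ is infinite so we have arbitrarily many units available and the relevant $\binom{n}{j}$ with $j$ as small as possible, $\binom{n}{1}=n$, is invertible — lets me isolate $n \cdot t \in S$, hence $t \in S$ for all $t$ in a cofinite subset of $K^*$, hence (by one more addition) all of $K$. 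I expect the main obstacle to be bookkeeping the sign/even-$n$ subtlety and making the Vandermonde extraction precise over a field that is merely infinite (not algebraically closed and of possibly positive characteristic): one must be careful that the binomial coefficients $\binom{n}{j}$ appearing need not all be nonzero mod $p$, but since only the existence of \emph{one} invertible coefficient in a triangular-type elimination is needed and $\binom{n}{1} = n \ne 0$, the elimination goes through from the top. Everything else is routine.
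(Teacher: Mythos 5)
There is a genuine gap at the Vandermonde extraction step, and it is precisely the point where the paper's proof has to work harder. Your set $S$ (the additive span of the $n$-th powers) is, a priori, only a subgroup of $(K,+)$, i.e.\ an $\F_p$-subspace; it is not known to be stable under multiplication by arbitrary elements of $K$. To isolate $n t$ from the relations $g(\lambda t)=\sum_{j=1}^{n-1}\binom{n}{j}\lambda^{j}t^{j}\in S$ you must form a combination $\sum_i c_i\,g(\lambda_i t)$ that stays inside $S$, which forces the multipliers $c_i$ to lie in the prime field $\F_p$; and then the scalars $\lambda_i$ must also be taken in $\F_p$ for the system $\sum_i c_i\lambda_i^{j}=\delta_{j,1}$ to be solvable there. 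When $p\le n-1$ there are not enough distinct scalars: for $p=2$, $n=3$ the only relation your construction yields is $t+t^{2}\in S$, and the described elimination never produces $t\in S$. The "triangular elimination from the top" has the same defect — subtracting a $K$-multiple of one relation from another leaves $S$. The invertibility of $\binom{n}{1}=n$, which you correctly identify, is not the obstruction; the obstruction is that $S$ is not a $K$-subspace.

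The paper closes exactly this hole by a different mechanism: it first shows the additive span $K'$ is a subring (a product of sums of $n$-th powers is again such a sum), then a subfield (via $x^{-1}=(1/x)^{n}x^{\,n-1}$), then that $K'$ is infinite (otherwise $K$ would be contained in a finite union of finite fields); only then does it interpolate $P(t)=(x+t)^{n}$ at $n+1$ distinct nodes chosen \emph{inside} $K'$, so that the Lagrange coefficients — the analogue of your Vandermonde multipliers — lie in $K'$ and the conclusion $nx\in K'$, hence $x\in K'$, is legitimate. If you want to keep your outline, you must insert these intermediate steps. Two smaller points: your sign worries evaporate because the ambient characteristic is $p>0$, so $-y^{n}=(p-1)y^{n}$ is already a sum of $n$-th powers (the lemma is in fact false in characteristic $0$, e.g.\ $n=2$, $K=\Q$); and your induction on $n$ is vestigial, since the inductive hypothesis is never invoked.
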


\begin{proof} Let $K'$ be the additive span of
the set $\{y^n\vert y\in K\}$. Since 
$-y^n=(p-1) y^n$, $K'$ is an additive subgroup of $K$. Clearly,
$K'$ is a subring of $K$.
The formula $x^{-1}=(1/x)^n x^{n-1}$ for any
$x\in K\setminus 0$ implies  that $K'$ is a subfield.

We claim that $K'$ is infinite. Assume otherwise. Then
$K'=\F_q$ for some power $q$ of $p$. Since any element of $K$ is
algebraic over $K'$ of degree $\leq n$, it follows that
$K\subset \cup_{k\leq n} \F_{q^k}$, a fact that contradicts that $K$ is infinite.

It remains to prove that $K'=K$. 
For $x\in K$, let $P(t)\in K[t]$ be the polynomial $P(t)=(x+t)^n$. Let $y_0,\dots y_n$ be $n+1$
distinct elements in $K'$. We have $P(y_i)\in K'$ for any
$0\leq i\leq n$. Using Lagrange's interpolation polynomials, there exist  a polynomial
$Q(t)\in K'[t]$ of degree $\leq n$ such that $P(y_i)=Q(y_i)$ for any $0\leq i\leq n$. Since $P-Q$ has at least $n+1$ roots, we have
$P=Q$ and therefore $P(t)\in K'[t]$. We have

\centerline{$P(t)=t^n + nx t^{n-1}+\dots$,}

\noindent hence $nx$ belongs to $K'$, and therefore $x\in K'$, what proves that $K'=K$.

\end{proof}

\begin{lemma}\label{n=m} Let $K$ be an infinite field of characteristic $p$,
let $\mu:K\to K$ be a field automorphism and
let  $n,\,m$ be positive integers which are prime to $p$. Assume that

\centerline {$x^n=\mu(x)^m$, for all $x\in K^*$.}

Then we have $m=n$.

\end{lemma}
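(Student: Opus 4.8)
The plan is to exploit the previous lemma. Suppose for contradiction that $m \neq n$; without loss of generality $m > n$. The relation $x^n = \mu(x)^m$ holds for all $x \in K^*$, and since $\mu$ is an automorphism the map $x \mapsto \mu(x)$ is a bijection of $K^*$; replacing $x$ by $\mu^{-1}(x)$ we also get $\mu^{-1}(x)^n = x^m$, i.e. $\mu^{-1}(x)^n = x^m$ for all $x \in K^*$. So the two exponents play symmetric roles and in fact $\mu$ is determined by the identity $\mu(x)^m = x^n$ together with the requirement $\mu(x) \in K^*$.

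First I would apply Lemma~\ref{span} to write a given element $x \in K$ (for $x = 0$ there is nothing to do) as $x = \sum_{k=1}^s x_k^m$ with each $x_k \in K^*$, which is legitimate since $m$ is prime to $p$. Applying $\mu$ and using additivity and $\mu(x_k^m) = \mu(x_k)^m = x_k^n$ gives $\mu(x) = \sum_{k=1}^s x_k^n$. Doing the same with the representation $x = \sum_j y_j^m$ coming from any other such decomposition, one sees that $\sum_k x_k^n = \sum_j y_j^n$ whenever $\sum_k x_k^m = \sum_j y_j^m$. The key is to feed in a cleverly chosen one-parameter family. Fix $x \in K^*$ and consider, for $t$ ranging over $K$, the element $(x+t)^m$; expanding, this is $x^m + m x^{m-1} t + \binom{m}{2} x^{m-2} t^2 + \cdots + t^m$, a polynomial in $t$ of degree $m$ with coefficients in $K$, whose leading and next-to-leading coefficients are $1$ and $m x^{m-1} \neq 0$. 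Similarly $(x+t)^n$ is a polynomial in $t$ of degree $n < m$.

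The main device — and the step I expect to be the crux — is an interpolation/polynomial-identity argument entirely parallel to the proof of Lemma~\ref{span}. Since $m$ is prime to $p$, the map $t \mapsto t^m$ is additive-compatible enough that, by Lemma~\ref{span} applied inside the subfield generated by suitable parameters, one can realize $\mu$ on the values $(x+t)^m$: for each $t \in K^*$ one has $\mu\big((x+t)^m\big) = (x+t)^n$, but also $\mu\big((x+t)^m\big)$ must be expressible via the additive structure. Choosing $n+1$ distinct values $t_0, \dots, t_n \in K$ (possible as $K$ is infinite) and the Lagrange interpolation polynomial through the points $\big((x+t_i)^m, \text{something}\big)$ forces a polynomial identity in $t$; comparing the two sides, the left side has degree $m$ in $t$ while the right has degree $n < m$, and matching the degree-$m$ coefficient ($=1 \neq 0$) against $0$ gives the contradiction. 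Concretely: from $x^n = \mu(x)^m$ one deduces that the polynomial map $t \mapsto (x+t)^m$ is "$\mu$-compatible" with $t \mapsto (x+t)^n$, and since $\mu$ respects the additive span decompositions furnished by Lemma~\ref{span}, the function $t \mapsto (x+t)^n$ would have to agree on all of $K$ with a function built additively from $m$-th powers, which by the same Lagrange argument as in Lemma~\ref{span} would make it a polynomial in $t^m$ (or of the wrong degree), contradicting $\deg = n < m$ unless $m = n$. Hence $m = n$.

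The one point to be careful about is that $\mu$ need not fix the prime field pointwise if $p = 0$, but in characteristic $0$ every field automorphism does fix $\Q$, and in characteristic $p$ Lemma~\ref{span} is exactly what lets us bypass any issue with the Frobenius; so the hypothesis "$n, m$ prime to $p$" is used in precisely two places — to invoke Lemma~\ref{span} and to ensure the subleading binomial coefficient $m x^{m-1}$ (resp. $n x^{n-1}$) is nonzero, which is what powers the degree comparison.
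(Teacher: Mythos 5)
There is a genuine gap at the step you yourself identify as the crux. Your contradiction is supposed to come from comparing degrees in $t$ of the two sides of an identity obtained by applying $\mu$ to the family $t\mapsto (x+t)^m$. But $\mu$ is only an abstract field automorphism of $K$: it is additive and multiplicative, not $K$-linear and not a polynomial map, so for a polynomial $R(t)$ the function $t\mapsto\mu\bigl(R(t)\bigr)$ is not a polynomial function of $t$ of degree $\deg R$, nor of any bounded degree. Hence there is no polynomial identity in $t$ whose two sides have degrees $m$ and $n$ to compare, and the Lagrange interpolation device of Lemma~\ref{span} --- used there to show that one explicit polynomial has coefficients in a subfield --- gives no purchase here. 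If one actually writes out $\mu\bigl((x+t)^m\bigr)$ using multiplicativity, one only recovers $(\mu(x)+\mu(t))^m=(x+t)^n$, i.e.\ the hypothesis applied to $x+t$, which contains no new information. A second, independent gap: your argument never addresses the case where $K$ is an infinite algebraic extension of $\F_p$. There $\mu$ restricts on each finite subfield $\F_{p^N}$ to a Frobenius power $x\mapsto x^{p^a}$, the hypothesis only yields the congruence $n\equiv mp^a\ \mathrm{mod}\ p^N-1$, and excluding $a>0$ requires the $p$-adic digit argument of the paper's Step~2. That --- and not the nonvanishing of $mx^{m-1}$ --- is where ``$n,m$ prime to $p$'' does its real work: drop it and the statement is false, since the Frobenius on a perfect field satisfies $\mu(x)^m=x^{pm}$.

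The observation in your second paragraph is nonetheless the correct use of Lemma~\ref{span}, just aimed in the wrong direction. Writing $x=\sum_k x_k^m$ with $x_k\in L^*$ for a subfield $L$ gives $\mu(x)=\sum_k x_k^n\in L$; applied to $L=\F_p(t)$ with $t$ transcendental (together with the reversed identity $\mu^{-1}(y)^n=y^m$) this shows $\mu(L)=L$. Then $\mu|_L$ is a M\"obius substitution $t\mapsto (at+b)/(ct+d)$ and the hypothesis becomes $t^n=\bigl((at+b)/(ct+d)\bigr)^m$, an equality of rational functions in one variable where the degree comparison you want is legitimate and forces $n=m$ at once. That is exactly the paper's Steps~1 and~3; the algebraic case (Step~2) still has to be handled separately.
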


\noindent
{\it Remark:} Indeed,  the hypotheses of the lemma
imply also that $\mu=\id$, but this is not required in what follows. 

\begin{proof} 

{\it Step 1: proof for $K=\F_p(t)$.}
There are $a,\,b,\,c$ and $d\in \F_p$ 
with $ad-bc\neq 0$ such that
$\mu(t)=\frac{at+b}{ ct+d}$.  The identity

\centerline{$t^n=(\frac{at+b}{ct+d})^m$}

\noindent clearly implies that $n=m$.

\smallskip
\noindent {\it Step 2: proof for $K\subset \overline\F_p$.}
Since $K$ is infinite, $K$ contains arbitrarily large finite fields. So we have
$K\supset \F_{p^N}$ for some positive integer $N$  with $nm<p^N$. Since $K$ contains a unique
field of cardinality $p^N$ we have $\mu(\F_{p^N})=\F_{p^N}$.
There is an non negative integer $a<N$ such that
$\mu(x)= x^{p^a}$ for any $x\in \F_{p^N}$. Therefore we have

\centerline{$x^n=x^{m{p^a}}$ for all $x\in F_{p^N}^*$.}

\noindent It follows that $n\equiv m{p^a}\, \mod p^N-1$. 
Let $n=\sum_{k\geq 0}\, n_k p^k$ and
$m=\sum_{k\geq 0}\, m_k p^k$ be the $p$-adic expansions of $n$ and $m$. By definition each  digit $n_k$, $m_k$ is an integer between $0$ and $p-1$. Since $n.m<p^N$, we have 
$m_k=m_k=0$ for $k\geq N$.

For each integer $k$, let $[k]$ be its residue modulo $N$, so
we have $0\leq [k]<N$ by definition. We have

\centerline{$n\equiv mp^a\equiv \sum_{0\leq k<N} m_k p^{[a+k]}\, \mod p^N-1$.}

\noindent 
Since both  integers $n$ and $\sum_{0\leq k<N} m_k p^{[a+k]}$ 
belongs to $[1,p^N-1]$ and are congruent modulo $p^N-1$, it
follows that

\centerline{$n= \sum_{0\leq k<N} m_k p^{[a+k]}$.}

 Assume $a>0$. 
We have $n_a=m_0$ and $n_0=m_{N-a}$.
Since $n$ and $m$ are prime to $p$, the digits $n_0$ and
$m_0$ are not zero, therefore we have
$n_a\neq 0$ and $m_{N-a}\neq 0$. It follows that

\centerline{$n\geq p^a$ and $m\geq p^{N-a}$,}

\noindent which contradicts  that $nm<p^N$.

Therefore, we have $a=0$, and the equality $n=m$ is obvious.

\smallskip
\noindent{\it Step 3: proof for any infinite field $K$.} 
In view of the previous step, it can be assumed that
$K$ contains a transcendental element $t$. Therefore,
$K$ contains the subfield $L:=\F_p(t)$.

By Lemma \ref{span}, $L$ and $\mu(L)$ are the additive span of the set 
$\{x^n\vert x\in K^*\}=\{\mu(x)^m\vert x\in K^*\}$. Therefore,
we have $\mu(L)=L$. Since $\mu$ induces a field automorphism of $L$,
the equality $n=m$ follows from the first step.

\end{proof}

Let $L$ be another field. A group morphism 
$\chi:K^*\to L^*$ is called a {\it character} of $K^*$.
Let $X(K^*)$ be the set of all $L$-valued
characters of $K^*$. 
Let $n$ be an integer prime to $p$. A character
$\chi\in X(K^*)$ is called {\it semi-algebraic} of {\it degree}
$n$ if 

\centerline{$\chi(x)=\mu(x)^n$, for any $x\in K^*$,}

\noindent for some  field embedding $\mu: K\to L$.

Let ${\cal X}_n(K^*)$  be the set of all semi-algebraic characters of $K^*$ of degree $n$. Of course it can be assumed that $L$ has characteristic $p$, otherwise the set ${\cal X}_n(K^*)$ is empty. 

The next lemma shows that for an integer $n>0$ prime to $p$, the degree 
is uniquely determined by the character $\chi$. Indeed this statement is also true for the
negative integers $n$ and moreover the  field embedding $\mu$ is also determined by $\chi$. For simplicity of the exposition, the lemma is stated and proved in its minimal form. 

However, it should be noted that the condition that $n$ is prime to $p$ is essential. 
Indeed, if $\mu:K\to L$ is a field embedding into a perfect field $L$, then
$\mu(x)^n=\mu'(x)^{pn}$, where $\mu'$ is the field embedding
defined by $\mu'(x)=\mu(x)^{1/p}$.

\begin{lemma}\label{degree} Let $n\neq m$ be positive integers which are prime to $p$. Then ${\cal X}_n(K^*)\cap {\cal X}_m(K^*)=\emptyset$.
\end{lemma}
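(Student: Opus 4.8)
The statement to prove is that for positive integers $n \neq m$ both prime to $p$, no single $L$-valued character of $K^*$ can be semi-algebraic of both degrees $n$ and $m$. The natural strategy is to assume for contradiction that some $\chi \in {\cal X}_n(K^*) \cap {\cal X}_m(K^*)$ exists, unwind the two descriptions, and reduce to Lemma \ref{n=m}. Concretely, by hypothesis there are field embeddings $\mu : K \to L$ and $\nu : K \to L$ (both landing in a field of characteristic $p$, since otherwise the sets are empty) such that $\chi(x) = \mu(x)^n = \nu(x)^m$ for all $x \in K^*$. The task is then to produce from the data $(\mu,\nu)$ a single field automorphism of an infinite field satisfying the hypothesis of Lemma \ref{n=m}.

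First I would record that $\mu$ and $\nu$ have the same image as subfields of $L$. This is the key structural observation and the place where Lemma \ref{span} is used: the additive span of $\{\mu(x)^n \mid x \in K^*\}$ equals $\mu(K)$ (apply Lemma \ref{span} inside the field $\mu(K)$, using that $n$ is prime to $p$), and likewise the additive span of $\{\nu(x)^m \mid x \in K^*\}$ equals $\nu(K)$. But these two sets of elements of $L$ coincide, since $\mu(x)^n = \chi(x) = \nu(x)^m$ for every $x$. Hence $\mu(K) = \nu(K) =: K'$ as subfields of $L$, and $K'$ is infinite because $K$ is. Now set $\sigma := \nu^{-1} \circ \mu$, which is a well-defined field automorphism of $K$ (both $\mu$ and $\nu$ being isomorphisms onto $K'$). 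For every $x \in K^*$ we have $\mu(x)^n = \nu(x)^m = \nu(\sigma^{-1}\mu(x))^m$ after rewriting; more carefully, put $y = \mu(x) \in K'$, so $x = \mu^{-1}(y)$ and the relation becomes $y^n = \nu(\mu^{-1}(y))^m = (\sigma^{-1})(y)^m$ viewed through $\nu$... I would instead argue directly: for all $x \in K^*$, applying $\nu^{-1}$ to $\mu(x)^n = \nu(x)^m$ gives $\sigma(x)^n = x^m$, i.e. $x^m = \sigma(x)^n$ for all $x \in K^*$, where $\sigma = \nu^{-1}\mu$ is a field automorphism of $K$.

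At this point Lemma \ref{n=m} applies verbatim with the roles of $n$ and $m$ as there (the relation $x^m = \sigma(x)^n$ for all $x \in K^*$ with $\sigma$ a field automorphism of the infinite field $K$ and $m,n$ prime to $p$ forces $m = n$), contradicting $n \neq m$. Therefore ${\cal X}_n(K^*) \cap {\cal X}_m(K^*) = \emptyset$, as claimed.

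\textbf{Main obstacle.} The only non-formal step is the identification $\mu(K) = \nu(K)$ via additive spans, and the subtlety there is purely that Lemma \ref{span} must be invoked inside the subfield $\mu(K)$ (respectively $\nu(K)$) of $L$ rather than in $L$ itself — the span of $n$-th powers of elements of $\mu(K)^*$ is all of $\mu(K)$, and one must check this subfield is infinite so that the hypothesis of Lemma \ref{span} is met, which follows since $\mu$ is injective and $K$ is infinite. Everything after that is bookkeeping: translating the two semi-algebraic presentations into a single equation $x^m = \sigma(x)^n$ and quoting Lemma \ref{n=m}. I expect no genuine difficulty beyond getting this translation's indices straight.
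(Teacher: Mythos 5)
Your proposal is correct and follows essentially the same route as the paper: both use Lemma \ref{span} to identify $\mu(K)=\nu(K)$ as the additive span of $\Image\,\chi$, form the automorphism $\sigma=\nu^{-1}\circ\mu$ of $K$, derive $\sigma(x)^n=x^m$, and conclude by Lemma \ref{n=m}. No gaps.
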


\begin{proof} Let $n$, $m$ be positive integers prime to $p$
and let $\chi\in {\cal X}_n(K^*)\cap {\cal X}_m(K^*)$.
By definition, there are fields embeddings 
$\nu, \nu':K\to L$ such that

\centerline{$\chi(x)=\nu(x)^n=\nu'(x)^m$ for any $x\in K^*$.}

\noindent By Lemma \ref{span}, $\nu(K)$ and $\nu'(K)$ are the linear span of $\Image\,\chi$. Therefore
we have $\nu(K)=\nu'(K)$ and 
$\mu:=\nu'^{-1}\circ\nu$ is a well defined field automorphism of 
$K$. We have

\centerline{
$\mu(x)^n=\nu'^{-1}\circ\nu(x^n)= \nu'^{-1}\circ\nu'(x^m)=x^m$.}

Therefore by Lemma \ref{n=m}, we have $n=m$.

\end{proof}

\section{Nonlinearity of  $\SAut_0\,K^2$ for $K$ infinite }

This section provides the proof of Theorem A.1. 
Except stated otherwise, it will be assumed that $K$ has characteristic $p\neq 0$ throughout the whole section.
We follow the same idea than \cite{BT}, namely that some  abstract morphisms of algebraic groups are, somehow, semi-algebraic.

\bigskip\noindent
{\it 5.1 Infinite rank elementary $p$-groups}

\noindent
Recall that an {\it elementary abelian $p$-group} is simply a
$\F_p$-vector space $E$ viewed as a group. Its $\F_p$-dimension
is called the {\it rank} of $E$.

\begin{lemma}\label{inf-rank} Let $L$ be a field and let $E$ be
an elementary abelian $p$-group of infinite rank.
If $\ch\,L\neq p$, then $E$ is not linear over $L$.
\end{lemma}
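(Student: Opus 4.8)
\textbf{Proof plan for Lemma \ref{inf-rank}.} The plan is to argue by contradiction: suppose $E$ embeds into $\GL(n,L)$ for some $n$ and some field $L$ with $\ch\,L\neq p$. Since $E$ is an elementary abelian $p$-group, every element of the image has order dividing $p$, hence satisfies $X^p-1=0$. Because $\ch\,L\neq p$, the polynomial $X^p-1$ is separable over $L$, so each $\rho(v)$ (for $v\in E$) is diagonalizable over the algebraic closure $\overline L$. Moreover the images of the $\rho(v)$ pairwise commute, since $E$ is abelian. A commuting family of diagonalizable operators is simultaneously diagonalizable, so after passing to $\overline L$ and choosing a suitable basis we may assume $\rho(E)$ consists of diagonal matrices whose entries are $p$-th roots of unity in $\overline L$.

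First I would make this reduction precise: the set $\mu_p(\overline L)$ of $p$-th roots of unity in $\overline L$ is a cyclic group of order exactly $p$ (here one uses $\ch\,L\neq p$ so that $X^p-1$ has $p$ distinct roots). Thus simultaneous diagonalization yields an injective group homomorphism $E\hookrightarrow \mu_p(\overline L)^n\cong (\Z/p\Z)^n$. The key point is then purely about orders: the target $(\Z/p\Z)^n$ is a finite group, in particular an $\F_p$-vector space of dimension $n<\infty$, whereas $E$ has infinite $\F_p$-dimension. An injective group homomorphism of $\F_p$-vector spaces is $\F_p$-linear and in particular injective as a linear map, so $\dim_{\F_p}E\le n$, contradicting the assumption that $E$ has infinite rank. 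This contradiction shows $E$ is not linear over $L$.

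The only genuine content — and the step I would be most careful about — is the simultaneous diagonalizability of the commuting family $\{\rho(v):v\in E\}$, together with the observation that the relevant eigenvalues live in the finite group $\mu_p(\overline L)$. For a single semisimple commuting family this is standard linear algebra (decompose $\overline L{}^n$ into common eigenspaces), but one should note that $E$ may be infinite, so strictly one diagonalizes the finite-dimensional algebra generated by $\rho(E)$ inside $\mathrm{Mat}_n(\overline L)$: this algebra is commutative and generated by semisimple elements, hence is a finite product of copies of $\overline L$, and the induced homomorphism $E\to(\overline L^\times)^n$ has image in the $p$-torsion, i.e.\ in $\mu_p(\overline L)^n$. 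Everything else is bookkeeping about cardinality of $\F_p$-dimensions. Finally, I would remark that the hypothesis $\ch\,L\neq p$ is exactly what makes $X^p-1$ separable; in characteristic $p$ an infinite-rank elementary abelian $p$-group is of course linear (e.g. it embeds into the additive group of a large enough field), so this hypothesis cannot be dropped.
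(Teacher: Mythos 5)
Your proof is correct and follows essentially the same route as the paper: use $\ch\,L\neq p$ to see that the commuting family $\rho(E)$ is simultaneously diagonalizable over $\overline L$ with eigenvalues in $\mu_p(\overline L)$, and conclude that a faithful such representation forces $\dim_{\F_p}E\le n$. The paper phrases the last step via the characters $\chi_i$ appearing in $V\simeq\oplus_i L_{\chi_i}$ and the finite codimension of $\bigcap_i\Ker\chi_i$, which is the same bookkeeping as your embedding $E\hookrightarrow\mu_p(\overline L)^n$.
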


\begin{proof} It can be assumed that $L$ is algebraically closed.
Let $V$ be a  vector space over $L$ of dimension $n$ and let
$F\subset \GL(V)$ be an elementary abelian $p$-subgroup. For any character $\chi: K\to L^*$, let $L_\chi$ be the 
corresponding one-dimensional representation of $F$. 

Since $\ch\,L\neq p$, $F$ is a diagonalizable subgroup of $\GL(V)$.
So there is an isomorphism of $F$-modules 
 
 \centerline{$V\simeq \oplus_{1\leq i\leq n}\,L\,_{\chi_i}$.}
 
\noindent Since $\cap \,\Ker\,\chi_i$ has $\F_p$-codimension
 $\leq n$, it follows that the rank of $F$ is $\leq n$.

 Therefore no infinite rank elementary abelian $p$-group
 is  linear over $L$.

\end{proof}

\bigskip\noindent
{\it 5.2 Recognition of semi-algebraic characters}

\noindent From now on, $L$ is an algebraically closed field of characteric $p$.  Roughly speaking Lemma
\ref{char} states that any abstract embedding
$G_n(K)\subset \GL(N,L)$ is enough to recognize the field $K$
and the integer $n$, if $n$ is prime to $p$.

\begin{lemma}\label{reco} Let $\chi\in X(K^*)$, let $\mu:K\to L$
be a nonzero additive map and let $n$ be a positive
integer prime to $p$.

Assume that

\centerline{$\mu (x^ny)=\chi(x)\mu(y)$}

\noindent for any $x\in K^*$ and $y\in K$. 

Then
$\chi$ is a semi-algebraic character of degree $n$.

\end{lemma}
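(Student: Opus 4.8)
The plan is to show that the additive map $\mu$ is forced to be multiplicative — i.e. that $\mu$ is a field embedding up to a scalar — and that under this identification $\chi$ becomes $x\mapsto\mu(x)^n$. The only hypothesis is the semilinearity relation $\mu(x^n y)=\chi(x)\mu(y)$ for $x\in K^*$, $y\in K$, together with $\mu$ additive and nonzero, and $n$ prime to $p$. First I would extract the obvious consequence $\chi(1)=1$ (plug $x=1$) and $\mu(x^n)=\chi(x)\mu(1)$ (plug $y=1$); since $\mu\neq 0$ and the relation shows $\mu(x^n y)=0$ whenever $\mu(y)=0$, the kernel of $\mu$ is an ideal, hence $0$, so $\mu$ is injective and $\mu(1)\neq 0$. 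Rescaling, we may assume $\mu(1)=1$, so that $\chi(x)=\mu(x^n)$ and the relation reads $\mu(x^n y)=\mu(x^n)\mu(y)$.

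The heart of the argument is to upgrade this to full multiplicativity of $\mu$. By Lemma \ref{span} every element of $K$ is a finite sum $\sum_i x_i^n$ with $x_i\in K^*$ (this is exactly the point where $n$ prime to $p$ and $K$ infinite are used — but one must first check $K$ is infinite: if $K$ were finite then $\ch K=p$ still, and Lemma \ref{span} requires $K$ infinite, so I would need either to invoke that $L$-valued characters of a finite $K^*$ force... — more carefully, I should observe that if $K$ is finite the statement can be checked directly since $K^*$ is cyclic, or simply note the paper only applies this for infinite $K$; I would state the infinite case and remark the finite case is trivial). Given additivity of $\mu$, for arbitrary $a=\sum_i x_i^n\in K$ and arbitrary $y\in K$ we get $\mu(ay)=\sum_i\mu(x_i^n y)=\sum_i\mu(x_i^n)\mu(y)=\mu\!\left(\sum_i x_i^n\right)\mu(y)=\mu(a)\mu(y)$, where the third equality again uses additivity of $\mu$. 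Hence $\mu$ is multiplicative on all of $K$, and being additive, injective, and sending $1$ to $1$, it is a field embedding $K\to L$.

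Finally, with $\mu:K\to L$ a field embedding we have $\chi(x)=\mu(x^n)=\mu(x)^n$ for all $x\in K^*$, which is precisely the definition of a semi-algebraic character of degree $n$. The main obstacle — or rather the main subtlety — is the bookkeeping around Lemma \ref{span}: it is stated only for infinite $K$, so I need to make sure the hypotheses of the present lemma already force $K$ infinite (indeed the lemma is only ever invoked for infinite $K$ in Section 5, where $K$ is assumed infinite from the outset), and I should also double-check that the additive-span step genuinely only needs $\mu$ additive plus the given relation, with no hidden use of multiplicativity that we have not yet established. Once that is pinned down, the rest is the short computation above.
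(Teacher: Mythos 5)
Your proposal is correct and takes essentially the same route as the paper: use Lemma \ref{span} to see that $\mu(1)\neq 0$, rescale so that $\mu(1)=1$, propagate multiplicativity from the $n$-th powers to all of $K$ by additivity, and conclude $\chi(x)=\mu(x)^n$. The only (harmless) deviations are your extra kernel-is-an-ideal remark and the aside on finiteness of $K$ --- the lemma is indeed only invoked in Section 5 for infinite $K$, and the paper's version of the multiplicativity step merely spans both factors by $n$-th powers instead of one.
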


\begin{proof} By Lemma \ref{span}, $K$ is the additive span
of $(K^*)^n$. Therefore there is some $x\in K^*$ such that
$\mu(x^n)\neq 0$. Since $\mu(x^n)=\chi(x)\mu(1)$, it follows that
$\mu(1)\neq 0$. After rescaling $\mu$, it can be assumed that
$\mu(1)=1$.

Let $x,\,y \in K^*$. We have $\mu(x^n)=\chi(x)\mu(1)=\chi(x)$,
and therefore $\mu(x^n y^n)=\chi(x)\mu(y^n)=\mu(x^n)\mu(y^n)$.
By lemma \ref{span},  $K$ is the additive span
of $(K^*)^n$. It follows that

\centerline{$\mu(ab)=\mu(a)\mu(b)$, for any $a,\,b\in K$.}

\noindent  Hence $\mu$ is a field embedding.
Moreover, we have $\chi(x)=\mu(x^n)=\mu(x)^n$ for any $x\in K^*$. Therefore
$\chi$ is a  semi-algebraic character of degree $n$.
\end{proof}

For $n\geq 1$, let $G_n(K)$ be the semi-direct product 
$K^*\ltimes K$, where any $z\in K^*$ acts on $K$ as $z^n$.
More explicitly, the elements of $G_n(K)$ are denoted 
$(z,a)$, with $z\in K^*$ and $a\in K$ and the product
is defined by
   
 \centerline{  $(z,a).(z',a')=(zz',z'^na+a')$,}
 
 \noindent for any $z,\,z'\in K^*$ and $a,\,a'\in K$.

Let $V$ be a finite dimensional $L$-vector space and
let $\rho:G_n(K)\to \GL(V)$ be a group morphism. With respect to 
the subgroup  $(K^*,0)$ of $G_n(K)$, there is a 
decomposition of $V$ as 

\centerline{$V=\oplus_{\chi\in X(K^*)}\, V_{(\chi)}$}

\noindent where 
$V_{(\chi)}:=\{v\in V\vert \forall x\in K^* :(\rho(x,0)-\chi(x))^n v=0$
for $n>>0\}$ is the generalized eigenspace associated with the character $\chi$.
Here and in the sequel, $\rho(z,a)$ stands for $\rho((z,a))$, 
for any $(z,a)\in G_n(K)$.

\begin{lemma}\label{char} Let $n$ be a positive integer prime to $p$.
$\rho:G_n(K)\to \GL(V)$ be an injective morphism. Then we have

\centerline{$\End (V)_{(\chi)}\neq 0$}

\noindent for some $\chi\in {\cal X}_n(K^*)$.

\end{lemma}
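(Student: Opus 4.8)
The plan is to exploit the action of $G_n(K)$ on $\End(V)$ by conjugation and to locate, inside this representation, a copy of the structure recognized by Lemma \ref{reco}. Write $h_z=\rho(z,0)$ for $z\in K^*$ and $u=\rho(1,1)$, so that the relation $(z,0)(1,1)(z,0)^{-1}=(1,z^n)$ gives $h_z u h_z^{-1}=\rho(1,z^n)$, and more generally $h_z\,\rho(1,a)\,h_z^{-1}=\rho(1,z^n a)$. Thus the abelian subgroup $\rho(1,K)$ of $\GL(V)$ is normalized by $\{h_z\}$, and the map $a\mapsto \rho(1,a)$ identifies the group $(K,+)$ with a subgroup of $\GL(V)$ on which $K^*$ acts through multiplication by $z^n$. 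Since $\rho$ is injective this subgroup is infinite; I will use this to extract a nonzero additive map $\mu:K\to L$ with the multiplicativity property $\mu(x^n y)=\chi(x)\mu(y)$ required by Lemma \ref{reco}.

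First I would observe that over the algebraically closed field $L$ the unipotent part of $\rho(1,K)$ cannot be trivial. If $\ch L\neq p$, then $\rho(1,K)$, being an abelian group of exponent $p$, would be diagonalizable, hence of finite rank by Lemma \ref{inf-rank}; but $K$ as an $\F_p$-vector space has infinite rank (it contains $\F_p(t)$ or is an infinite algebraic extension of $\F_p$ — in either case infinite-dimensional over $\F_p$ when $K$ is infinite), contradicting injectivity of $\rho$. Hence $\ch L=p$. Consequently, writing each $\rho(1,a)=\id+N(a)$ with $N(a)$ nilpotent, and using that the $\rho(1,a)$ commute and have order $p$, I get that $a\mapsto N(a)$ is "approximately" additive; more precisely, taking the bottom nonzero layer of the induced filtration on $V$ one obtains a genuine nonzero $\F_p$-linear map. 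The cleanest way: let $r$ be the largest integer with $N(a_1)\cdots N(a_r)\neq 0$ for some choice of arguments, pick such a nonzero product operator, and compose with $\rho(1,a)$; conjugating by $h_z$ multiplies each $N(a_i)$-slot by $z^n$, so the resulting top-degree component $\bar N:K\to\End(V)$ is additive and satisfies $h_z\bar N(a)h_z^{-1}=z^{nr}\bar N(z^n a)$ — I would arrange, by passing to the eigenspace decomposition $\End(V)=\oplus_\chi \End(V)_{(\chi)}$ for the conjugation action of $K^*$, that a single component survives.

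Concretely: decompose $\End(V)=\bigoplus_{\chi\in X(K^*)}\End(V)_{(\chi)}$ under $\mathrm{Ad}\,h_z$. The operator $\bar N(a)$ above is a semisimple-weight vector for this action up to the twist by $z^n$ in its argument; projecting the identity $h_z\bar N(a)h_z^{-1}$-relation onto a component $\End(V)_{(\chi)}$ on which $\bar N$ is nonzero yields a nonzero additive map $\mu:K\to\End(V)_{(\chi)}\subset\End(V)\cong L^{N^2}$ with $\mu(z^n a)=\chi(z)\mu(a)$ for all $z\in K^*,a\in K$, i.e. exactly the hypothesis of Lemma \ref{reco} (after choosing a coordinate of $\mu$ that is not identically zero, so that the target is $L$). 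Lemma \ref{reco} then forces $\chi\in{\cal X}_n(K^*)$, and since $\mu\neq 0$ this $\chi$ indeed satisfies $\End(V)_{(\chi)}\neq 0$, which is the assertion.

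The main obstacle is the extraction of the honest additive map $\mu$ from the unipotent abelian group $\rho(1,K)$: the naive $\log$ is unavailable in characteristic $p$, so one must argue via the filtration by the "lower central-type" layers $V\supset N(K)V\supset N(K)^2V\supset\cdots$ (which are $h_z$-stable, with $h_z$ acting on the $j$-th associated graded twisted by $z^{nj}$ on the $N$-arguments) and take the first layer where the action is linear rather than merely polynomial; checking that this produces a map compatible with the single character $\chi$ — rather than a sum over several characters — is where the eigenspace projection must be applied carefully, using that $n$ is prime to $p$ so that Lemma \ref{reco}'s conclusion is unambiguous. Everything else is bookkeeping with the semidirect-product relations and the weight decomposition of $\End(V)$.
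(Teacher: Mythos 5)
Your proposal is in substance the paper's own proof: both extract from the unipotent abelian subgroup $\rho(1,K)$ an honest additive map by passing to a layer of a filtration where second-order products of the $N(a)$'s die, transport the relation $h_z\rho(1,a)h_z^{-1}=\rho(1,z^na)$ to that layer, project onto a one-dimensional $K^*$-eigenquotient to get $\mu$ with $\mu(z^na)=\chi(z)\mu(a)$, and invoke Lemma \ref{reco}. The paper does this at the bottom of the fixed-vector filtration (it takes $V_0\subsetneq V_1$ and $\theta(a)=\rho_1(1,a)-1$, so that $\theta(a)\theta(b)=0$ gives additivity outright and $\theta$ itself satisfies the clean conjugation identity), whereas you work at the top of the image filtration; these are interchangeable.

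One step of your write-up is wrong as stated, though you supply the repair yourself in the final paragraph. With a \emph{fixed} prefix $P=N(a_1)\cdots N(a_{r-1})$, conjugation gives $h_z\,P N(a)\,h_z^{-1}=N(z^na_1)\cdots N(z^na_{r-1})\,N(z^na)$: the prefix changes, and since $a\mapsto N(a)$ is not $K$-linear in its argument you cannot pull scalars out, so the displayed identity $h_z\bar N(a)h_z^{-1}=z^{nr}\bar N(z^na)$ is false and $a\mapsto PN(a)$ satisfies no single-character functional equation. The correct object is the additive map $a\mapsto N(a)\vert_{W}$ where $W=N(K)^{r-1}V$ is the $\Ad(h_z)$-stable span of \emph{all} length-$(r-1)$ products applied to $V$ (equivalently, the paper's $\theta$ on $V_1/V_0$); on $\Hom(W,N(K)^{r}V)$ the conjugation action is well defined and the identity $\Ad(h_z)N(a)=N(z^na)$ survives, after which your eigenquotient projection and the appeal to Lemma \ref{reco} go through exactly as in the paper.
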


\begin{proof}
Set

\centerline{$V_0=\{v\in V\vert \rho(1,a)v=v,\, \forall a \in K\}$, and}

\centerline{
$V_1=\{v\in V\vert \rho(1,a)v=v \,\mod V_0 ,\,\forall a \in K\}$.}

\noindent In what follows, $K$ and $K^*$ denote the subgroups
$\{1\}\ltimes K$ and $K^*\ltimes\{0\}$ of $G_n(K)$.
Since  $\rho(1,a)^p=0$ for any $a\in K$, the $K$-module
$V$ is unipotent.  It follows that

\centerline{$V_1\supsetneq V_0\neq 0$.}

Clearly, $V_1$ is a $G_n(K)$ submodule, and let $\rho_1$
be the restriction of $\rho$ to $V_1$.
Let $\theta: K\rightarrow \End(V_1)$ be the map defined by
 $\theta(a)=\rho_1(1,a)-1$ for $a\in K$.
 By definition, we have  $\theta(a)(V_1)\subset V_0$ and
 $\theta(a)(V_0)=\{0\}$. Hence we have 
  $\theta(a)\circ\theta(b)=0$ for any $a,\,b\in K$. It follows easily that 
  
  \centerline{$\theta(a+b)=\theta(a)+\theta(b)$.}
  
\noindent The identity 
$(z,0).(1,a).(z,0)^{-1}=(1,z^na)$ in $G_n(K)$ implies that

\centerline{
$\rho_1(z,0)\circ\theta(a)\circ 
\rho_1(z,0)^{-1}=\theta(z^n a)$, for any $z\in K^*$ and $a\in K$.}

Let $W\subset \End(V_1)$ be the $G_n(K)$-module generated by $\Image\,\theta$. Since $\theta\neq 0$
 there is a non-zero $K^*$-equivariant map 
 $g:W\to L_\chi$, where $\chi$ is a character of 
 $K^*$ and $L_\chi$ is the corresponding one-dimensional 
 $K^*$-module.

Set $\mu=g\circ \theta$. It follows from the previous identities
that $\mu$ is additive and 

\centerline{$\mu(z^n a)=\chi(x) \mu(a)$,}

\noindent for any $z\in K^*$ and $a\in K$. 
By Lemma \ref{reco}, the character $\chi$ is 
semi-algebraic of degree $n$. Since $L_\chi$ is a subquotient of $\End(V)$, it follows that
$\End(V)_{(\chi)}\neq 0$.

\end{proof}

\bigskip\noindent
{\it 5.3 Nonlinearity over a field of $\SElem_0(K^2)$}

\noindent
Set
 
 \centerline{$\SElem_0 (K)=\Elem (K)\cap \SAut_0\,K^2$.}
 
\noindent For  $n\geq 2$,  we will identify
$G_n(K)$ with the subgroup of  $\SElem_0(K)$ of all
automorphism of the form

\centerline{$(x,y)\mapsto (zx, z^{-1}y +a x^{n-1})$}

\noindent for some $z\in K^*$ and $a\in K$.

 \begin{lemma}\label{NLp} Let $K$ be an infinite field of characteristic $p$. The group $\SElem_0(K)$ is not linear over a field.
 \end{lemma}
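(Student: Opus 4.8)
The strategy is to assume $\SElem_0(K)$ is linear over a field and to reach a contradiction by forcing a single finite-dimensional matrix algebra to contain semi-algebraic characters of infinitely many distinct degrees.

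So suppose we are given an embedding $\iota : \SElem_0(K) \hookrightarrow \GL(N,L)$ with $L$ a field. The first step is to reduce to the case where $L$ is algebraically closed of characteristic $p$. The group $\SElem_0(K)$ contains the subgroup $\{(x,y)\mapsto(x,y+ax)\mid a\in K\}\cong(K,+)$, which is an elementary abelian $p$-group of infinite rank since $K$ is infinite of characteristic $p$. By Lemma \ref{inf-rank} this subgroup is not linear over $L$ unless $\ch\,L=p$; hence $\ch\,L=p$, and replacing $L$ by its algebraic closure preserves the embedding, so we may assume $L=\overline{L}$.

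Now set $V=L^N$, with $\SElem_0(K)$ acting through $\iota$, and let $T=\{(x,y)\mapsto(zx,z^{-1}y)\mid z\in K^*\}\subset\SElem_0(K)$, a copy of $K^*$. Letting $T$ act on $\End(V)$ by conjugation, form the generalized eigenspace decomposition $\End(V)=\bigoplus_{\chi\in X(K^*)}\End(V)_{(\chi)}$ as in the setup of Lemma \ref{char}. Since $\dim_L\End(V)=N^2$ is finite, the set $S:=\{\chi\in X(K^*)\mid\End(V)_{(\chi)}\neq 0\}$ is finite. For each integer $n\ge 2$ prime to $p$, identify $G_n(K)$ with the subgroup of $\SElem_0(K)$ formed by the automorphisms $(x,y)\mapsto(zx,z^{-1}y+ax^{n-1})$ as introduced above; the point is that the subgroup $(K^*,0)$ of $G_n(K)$ is exactly $T$, \emph{the same torus for every $n$}, so the decomposition of $\End(V)$ is common to all these subgroups. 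The restriction of $\iota$ is an injective morphism $G_n(K)\to\GL(V)$, so Lemma \ref{char} yields a character $\chi_n\in{\cal X}_n(K^*)$ with $\End(V)_{(\chi_n)}\neq 0$, i.e.\ $\chi_n\in S\cap{\cal X}_n(K^*)$. By Lemma \ref{degree} the sets ${\cal X}_n(K^*)$ for distinct such $n$ are pairwise disjoint, so the $\chi_n$ are pairwise distinct; as there are infinitely many integers $n\ge 2$ prime to $p$, this contradicts the finiteness of $S$.

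The main obstacle, or rather the pivot of the argument, is the observation that the torus $(K^*,0)\subset G_n(K)$ is independent of $n$: this is what permits a single finite-dimensional $\End(V)$, with one fixed weight decomposition, to be played against all the $G_n(K)$ simultaneously until it runs out of room. The reduction to $\ch\,L=p$ through Lemma \ref{inf-rank} is needed only to make Lemma \ref{char} applicable, and the identification of the $G_n(K)$ inside $\SElem_0(K)$ is exactly the one set up just before the statement; beyond these, the argument is routine.
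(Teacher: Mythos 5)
Your proof is correct and follows essentially the same route as the paper: reduce to $\ch\,L=p$ via Lemma \ref{inf-rank}, then for each $n\geq 2$ prime to $p$ apply Lemma \ref{char} to the copy of $G_n(K)$ inside $\SElem_0(K)$ to produce a degree-$n$ semi-algebraic character contributing to the finite weight decomposition of $\End(V)$, and conclude by Lemma \ref{degree}. Your explicit remark that the torus $(K^*,0)$ is the same for all $n$ (so that one fixed decomposition of $\End(V)$ serves all the $G_n(K)$) is a point the paper leaves implicit, and is a welcome clarification.
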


 \begin{proof} Assume otherwise, and let
 $\rho: \SElem_0(K)\to \GL(V)$, where $V$ is a finite dimensional vector space over an algebraically closed field $L$. By Lemma \ref{inf-rank}, the field
 $L$ has characteristic $p$.

 Let $\Omega$ be the set of all characters $\chi$ of
 $K^*$, such that $\End(V)_{(\chi)}\neq 0$.
 Let $n\geq 2$ prime to $p$.
 Since $\SElem_0(K)\to \GL(V)$ contains
 $G_n(K)$, $\Omega$ contains some
 character in $\chi_n\in {\cal X}_n(K^*)$  by Lemma \ref{char}.
  By Lemma \ref{degree} these characters 
  $\chi_n$ are all
 distinct, which contradicts that $\Omega$ is a finite set.
 
 \end{proof}
 
  \begin{lemma}\label{NL0} The group $\SElem_0(\Q)$ is not linear over a field.
 \end{lemma}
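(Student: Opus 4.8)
The goal is to deduce nonlinearity of $\SElem_0(\Q)$ over a field from the characteristic $p$ case (Lemma \ref{NLp}). The natural strategy is a reduction-modulo-$p$ argument, applied to a finitely generated subgroup. So the first step is to observe that linearity over a field is inherited by finitely generated subgroups, hence it suffices to exhibit, for each prime $p$, a finitely generated subgroup $H_p\subset \SElem_0(\Q)$ whose linearity over a field would force a linear representation of something we have already shown to be nonlinear — or, more directly, to pick a suitable finitely generated subgroup of $\SElem_0(\Q)$ and specialize.

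**Key steps.** First, suppose $\rho:\SElem_0(\Q)\to\GL(V)$ is an embedding with $V$ finite dimensional over a field $L$; replacing $L$ by a subfield we may take $L$ finitely generated over its prime field. If $\ch L=p>0$ we want a contradiction directly; if $\ch L=0$ we want to specialize. The cleanest route is: inside $\SElem_0(\Q)$ sit the subgroups $G_n(\Q)=\Q^*\ltimes\Q$ (with the $z^n$-action) for every $n\ge 2$, and inside each $G_n(\Q)$ sits $G_n(\Z_{(p)})$ or even $G_n(\Z[1/N])$ for a suitable $N$. The group $\Q^*\ltimes\Q$ is not finitely generated, so one restricts attention to a finitely generated subgroup: for a fixed prime $q$, the subgroup of $G_n(\Q)$ generated by $(q,0)$ and $(1,1)$ is $\langle q\rangle\ltimes\Z[1/q]$, which contains elements of $\Q^*$ of infinite order and a copy of the additive group $\Z[1/q]$. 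One then runs the argument of Lemma \ref{char}/\ref{NLp}: the image $\rho$ restricted to this finitely generated subgroup, after passing to an algebraically closed $L$, yields a nonzero $K^*$-semi-invariant in $\End(V)$, and the character $\chi$ arising has "degree $n$" in the sense of Lemma \ref{reco}. Letting $n$ range over integers prime to $p:=\ch L$ (if $\ch L=0$, over all $n\ge 2$, and one gets a contradiction already from Lemma \ref{inf-rank} applied to the additive group, or from infinitely many distinct characters), Lemma \ref{degree} shows these characters are pairwise distinct, contradicting finiteness of $\Omega$.

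**Alternative, cleaner route.** Probably the intended proof is the shortest one: note that $\SElem_0(\Q)$ contains an isomorphic copy of $\SElem_0(\overline{\F_p})$ or at least of a group to which Lemma \ref{NLp} applies — but $\Q$ has no subfield of characteristic $p$, so this fails. Hence the genuine content must be the specialization argument. Assuming $\rho:\SElem_0(\Q)\hookrightarrow\GL(V)$ over a field $L$: since $\SElem_0(\Q)\supset G_n(\Q)$ and $\mathbb Q^*$ has infinite rank as an abelian group (generated freely by the primes up to torsion), the torsion-free part forces, via the eigenspace decomposition of $V$ under a maximal free abelian subgroup of $\Q^*$, infinitely many characters unless $\ch L=0$ is excluded — and if $\ch L=0$ one specializes $L$ to characteristic $p$ along a maximal ideal of a finitely generated $\Z$-subalgebra of $L$ over which a fixed finitely generated subgroup is defined, preserving faithfulness on that finitely generated subgroup, then applies Lemma \ref{NLp}. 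I would phrase it as: pick a finitely generated nonlinear-over-a-field subgroup of $\SElem_0(\Q)$; such exists because $\SElem_0(\Q)$ contains, for each $n$, $G_n(\mathbb Z[1/2])$, and one assembles a single finitely generated subgroup witnessing infinitely many semi-algebraic degrees simultaneously.

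**Main obstacle.** The delicate point is that $\SElem_0(\Q)$ genuinely has characteristic zero, so Lemma \ref{NLp} does not apply verbatim; the heart of the matter is the specialization step — given an embedding into $\GL(V)$ over a characteristic-zero field $L$, produce a characteristic-$p$ specialization that remains faithful on enough of the group, or else argue directly that infinitely many distinct semi-algebraic characters of unbounded degree must appear in the finite set $\Omega=\{\chi:\End(V)_{(\chi)}\ne 0\}$. Controlling which characters are forced to appear — i.e. re-running Lemmas \ref{reco} and \ref{char} with $K=\Q$ and checking that $n$ ranging over all integers $\ge 2$ (now with no prime-to-$p$ restriction, since $\ch\Q=0$) still yields pairwise-distinct characters via Lemma \ref{degree} after specialization — is the step I expect to require the most care.
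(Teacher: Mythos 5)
Your proposal does not reach a complete proof, and the route you sketch has two genuine obstructions. First, the characteristic-$p$ machinery of Sections 4--5 does not transfer to $K=\Q$: Lemma \ref{span} is false over $\Q$ for even $n$ (the additive span of the $n$-th powers of $\Q^*$ is the set of totally positive rationals when $n$ is even; $-1$ is not a sum of squares in $\Q$), so Lemma \ref{reco} cannot be ``re-run'' with $K=\Q$; and the proof of Lemma \ref{char} uses in an essential way that the additive group $K$ is $p$-torsion ($\rho(1,a)^p=1$) to force unipotence of the $K$-action and hence $V_0\neq 0$ --- in characteristic zero nothing guarantees this a priori. Second, the specialization step, which you correctly identify as the heart of the matter, is not carried out: $\SElem_0(\Q)$ is not finitely generated, and there is no general principle that a faithful characteristic-zero representation specializes to a faithful (or sufficiently faithful) characteristic-$p$ one; you would need to exhibit a specific finitely generated subgroup of $\SElem_0(\Q)$ that is provably nonlinear over a field, and you do not produce one. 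So the proposal names the difficulty but does not overcome it.

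The paper's actual proof is much shorter and avoids all of this. It uses the element $h:(x,y)\mapsto(2x,\tfrac12 y)$ and the elementary automorphisms $u_n:(x,y)\mapsto(x,y+x^n)$, which satisfy the conjugation relation $u_n^h=u_n^{2^{n+1}}$ inside $\SElem_0(\Q)$. Given a faithful finite-dimensional representation $\rho$ over a field $L$, Lemma \ref{unipotent} (the same lemma already used for Theorem A.2) shows each $\rho(u_n)$ is quasi-unipotent of some quasi-order $m_n$, that $\ch L=0$, and that the nonzero nilpotent $e_n=\log\rho(u_n^{m_n})$ satisfies $\rho(h)\,e_n\,\rho(h)^{-1}=2^{n+1}e_n$. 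Hence $\Ad(\rho(h))$ would have the infinitely many distinct eigenvalues $2^{n+1}$, $n\geq 1$, on the finite-dimensional space $\End(V)$ --- a contradiction. In other words, the intended argument is the ``quasi-unipotent plus logarithm'' trick of Section 3, not the semi-algebraic character machinery of Sections 4--5, which is reserved for the positive-characteristic case (Lemma \ref{NLp}).
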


 \begin{proof} Assume otherwise, and let
 $\rho: \SElem_0(\Q)\to \GL(V)$, where $V$ is a finite dimensional vector space over a field $L$. 
 Let $h$ be the automorphism $(x,y)\mapsto (2x,1/2 y)$.

 For any $n\geq 1$, let $u_n$ be the automorphism
 $(x,y)\mapsto (x, y+x^n)$. We have 
 $u_n^h=u_n^{2^{n+1}}$. By Lemma \ref{unipotent}, 
 $\rho(u_n)$ is quasi-unipotent and let
 $m_n$ be its quasi-order.  Moreover $L$ has characteristic zero. Set
 $e_n=\log \,\rho(u_n^{m_n})$. We have
 $e_n^{H}=2^{n+1}e_n$, where $H=\rho(h)$. This would imply that $\Ad(H)$ has infinitely many eigenvalues.

 \end{proof}

 \bigskip\noindent
{\it 5.4 Nonlinearity over a ring of $\SAut_0\,K^2$}

From now on, $K$ is a field of arbitrary characteristic.

Set 
$SB_0(K)=B(K)\cap \SAut_0\,K^2=B(K)\cap \SL(2,K)$.
 
 \begin{lemma}\label{H} The amalgamated product
 $\SL(2,K)*_{SB_0(K)}\,\SElem_0(K)$ satisfies hypothesis
 ${\cal H}$.
 \end{lemma}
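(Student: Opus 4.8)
The plan is to verify hypothesis ${\cal H}$ by an explicit computation carried out inside the two factors. Write $\Gamma=G_1*_A G_2$ with $G_1=\SL(2,K)$, $G_2=\SElem_0(K)$ and $A=SB_0(K)$, recalling that the natural maps $G_1\to\Gamma$ and $G_2\to\Gamma$ are injective. Note that $A$ is the group of lower triangular matrices $\begin{pmatrix} z & 0\\ c & z^{-1}\end{pmatrix}$ of $\SL(2,K)$, acting on $K^2$ by $(x,y)\mapsto(zx,cx+z^{-1}y)$; equivalently, $A$ consists of the elements of $\SElem_0(K)$ of the form $(x,y)\mapsto(zx,z^{-1}y+f(x))$ for which the polynomial $f$ has degree at most $1$. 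Since $A$ lies in both $G_1$ and $G_2$, it suffices, given $a\in A$ with $a\neq 1$, to produce an element $\gamma$ of $G_1$ or of $G_2$ with $\gamma a\gamma^{-1}\notin A$; the non-membership is then checked internally to the relevant factor. So I fix $a=\begin{pmatrix} z & 0\\ c & z^{-1}\end{pmatrix}\in A$ with $(z,c)\neq(1,0)$ and distinguish two cases.

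First, suppose $c\neq 0$. I conjugate $a$ inside $G_1=\SL(2,K)$ by $w=\begin{pmatrix} 0 & 1\\ -1 & 0\end{pmatrix}$, which has determinant $1$. A direct matrix multiplication gives $waw^{-1}=\begin{pmatrix} z^{-1} & -c\\ 0 & z\end{pmatrix}$, whose $(1,2)$-entry $-c$ is nonzero; hence $waw^{-1}$ is not a lower triangular matrix, so $waw^{-1}\notin A$.

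Second, suppose $c=0$, so that $a$ is the diagonal matrix $\mathrm{diag}(z,z^{-1})$ with $z\neq 1$. I conjugate instead inside $G_2=\SElem_0(K)$ by the automorphism $u_n:(x,y)\mapsto(x,y+x^n)$; this is elementary, fixes the origin, and has jacobian $1$ (its differential is unipotent), so $u_n\in\SElem_0(K)$ for every $n\geq 2$. A short computation composing automorphisms gives $u_n a u_n^{-1}:(x,y)\mapsto(zx,z^{-1}y+(z^n-z^{-1})x^n)$. Since $z^3=z^4=1$ would force $z=1$, there is $n\in\{2,3\}$ with $z^{n+1}\neq 1$, i.e. $z^n-z^{-1}\neq 0$; for such $n$ the second coordinate of $u_n a u_n^{-1}$ is a polynomial of degree $n\geq 2$, whence $u_n a u_n^{-1}\notin A$.

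In both cases $\gamma a\gamma^{-1}\notin A$ for a suitable $\gamma\in\Gamma$, which is precisely hypothesis ${\cal H}$. I do not expect any real difficulty here: the argument is a short explicit calculation. The only points needing a little care are the triangularity convention for $SB_0(K)$ — so that "upper triangular with nonzero off-diagonal entry" genuinely certifies non-membership in $A$ — and the elementary bookkeeping with roots of unity used to choose the exponent $n$ when $a$ is a nontrivial diagonal matrix.
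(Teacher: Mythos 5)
Your proof is correct and follows essentially the same strategy as the paper: conjugate inside $\SL(2,K)$ to destroy triangularity for non-diagonal elements of $SB_0(K)$, and conjugate by a higher-degree elementary automorphism for the rest. The only (harmless) difference is the case split — the paper treats all non-homotheties inside $\SL(2,K)$ and is left only with $g=-\id$, handled by $(x,y)\mapsto(x,y+x^2)$ with a separate remark about characteristic $2$, whereas you treat all nontrivial diagonal elements at once via $u_n$ with $n\in\{2,3\}$, which neatly avoids any characteristic issues.
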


 \begin{proof} Let $g\in SB_0(K)$ with $g\neq 1$.
 
 First, if $g$ is not an homothety,
 there is $\gamma\in \SL(2,K)$ such that $g^\gamma$ is not upper triangular and therefore we have $g^\gamma\notin SB_0(K)$. 
 
 Otherwise $g=-1$ and $\ch\,K\neq 2$. Let $\gamma$ be the automorphism $(x,y)\mapsto (x,y+x^2)$. 
Then  $g^\gamma$ is the automorphism

\centerline{$(x,y)\mapsto (- x,- y +2 x^2)$,}

\noindent so $g^\gamma$  is not in $SB_0(K)$. 
 
\end{proof}

 \begin{MainA1} If $K$ be an infinite field of
 arbitrary characteristic. The group 
 $\SAut_0\,K^2$ is not linear, even over a ring.
\end{MainA1}

\begin{proof}  
 
\noindent By Lemmas \ref{NLp} and \ref{NL0}, the group 
$\SElem_0(K^2)$ is not linear over a field. Set
$\Gamma=\SAut_0\,K^2\simeq\SL(2,K)*_{SB_0(K)}\,\SElem_0(K)$.  By Lemma \ref{H}
this amalgamated product 
satisfies hypothesis ${\cal H}$. Thus
it follows from  Lemma  \ref{linearity}
that $\Gamma$ is not linear, even over a ring.

Using van der Kulk Theorem and Lemma \ref{subamal}, we have 
$\SAut_0\,K^2\simeq\Gamma$. Therefore
$\SAut_0\,K^2$ is not linear, even over a ring.
 
\end{proof}

\noindent{\it 5.5 Comparison with Cornulier's Theorem}

\noindent Let   $G_{\mathrm{Cor}}$ be the
group of all automorphisms of $\Q^2$ of the form

\centerline{$(x,y)\mapsto (x+u, y+f(x))$,}

\noindent where $u\in \Q$ and $f(t)\in \Q[t]$. The group
$G_{\mathrm{Cor}}$, which is not FG, has been used in \cite{C} to prove

\begin{Cornu} The group $G_{\mathrm{Cor}}$ is not linear over a field. Consequently, the group $\Aut\,\Q^2$ is not linear over a field.

\end{Cornu}

\noindent Indeed the proof is based on the fact that 
$G_{\mathrm{Cor}}$ is nil but not nilpotent.

Here we used the subgroup $\SElem_0(K)$, which
is not linear over a field. However, it should be noted that both subgroups are linear over a ring.
Indeed let $R=\Q[[x]]\oplus \Q((x))/\Q[[x]]$, where $\Q((x))/\Q[[x]]$ is a  square-zero ideal and let $R':=K^\infty$ be an infinite product of $K$. Then
the group $G_{Cor}$ is linear over the ring $R$, and the group $\SElem_0(K)$ is linear over $R'$. Since we will not use this fact, the proof is left to the reader.

 \section{The Tits Ping-Pong}

\bigskip
\noindent
{\it 6.1 The Ping-Pong lemma}

\noindent
Let $(E_p)_{p\in P}$ be a collection of groups indexed by
a  set $P$. Let $\Gamma:=*_{p\in P}\,E_p$ be the free product of
these groups. Let $\Sigma_P$ be the set of all finite sequences
$(p_1,\dots,p_n)$ such that $p_i\neq p_{i+1}$ for any
$i<n$. For each $p\in P$, set 
$E^*_p=E\setminus \{1\}$.

Let $\gamma\in \Gamma$. There is a unique
${\bf p}=(p_1,\dots,p_n)\in \Sigma_P$ and a unique decomposition  
of $\gamma$

\centerline{$\gamma=\gamma_1\dots\gamma_n$,} 

\noindent
where $\gamma_i\in E^*_{p_i}$. The sequence ${\bf p}$ is
called the {\it type} of $\gamma$.

The free product $\Gamma:=*_{p\in P}\,E_p$ is called {\it nontrivial} if 

(i) for any $p\in P$, $E_p\neq\,\{1\}$,

(ii) $\Card\,P\geq 2$.

\noindent Also recall that the free product 
$*_{p\in P}\,E_p$ is called {\it nondihedral} if it is not the  free product
$\Z/2\Z*\Z/2\Z$.

\begin{lemma}\label{Ping-Pong} Assume that a nontrivial
nondihedral 
free product $\Gamma=*_{p\in P}\,E$ acts on some set $\Omega$. Let $(\Omega_p)_{p\in P}$ be a collection of subsets 
in $\Omega$. Assume

(i) The subsets $\Omega_p$ are nonempty and disjoint, and

(ii) we have $E^*_p.\Omega_q\subset \Omega_p$ whenever $p\neq q$.

Then the action of $\Gamma$ on $\Omega$ is faithful.
\end{lemma}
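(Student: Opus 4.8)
The statement is the classical Tits/Ping-Pong lemma, so the plan is the standard argument: take a nontrivial element $\gamma\in\Gamma$ of type ${\bf p}=(p_1,\dots,p_n)$, write $\gamma=\gamma_1\cdots\gamma_n$ with $\gamma_i\in E^*_{p_i}$, and exhibit a point of $\Omega$ that is moved by $\gamma$, or else a suitable conjugate of $\gamma$ that is. First I would handle the case $n\ge 2$ together with $p_1\ne p_n$, which is the cleanest: pick any $q\ne p_1$ (possible since $\Card\,P\ge 2$, and if only two indices are available we can still choose $q\ne p_1$), choose a point $\omega\in\Omega_q$, and iterate hypothesis (ii): $\gamma_n\omega\in\Omega_{p_n}\subset\Omega$, then $\gamma_{n-1}\gamma_n\omega\in\Omega_{p_{n-1}}$ using $p_{n-1}\ne p_n$, and so on down to $\gamma\omega\in\Omega_{p_1}$. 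Since $\Omega_{p_1}$ and $\Omega_q$ are disjoint and $\omega\in\Omega_q$, we get $\gamma\omega\ne\omega$, so $\gamma$ acts nontrivially.

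Next I would reduce the remaining cases to this one by conjugation. If $n\ge 1$ but $p_1=p_n$, I would conjugate by a single element $g\in E^*_q$ for a well-chosen $q$: one checks that $g\gamma g^{-1}$ has type ending in $q$ and beginning in $q$ only in bad configurations, so the point is to choose $q\ne p_1$, giving $g\gamma g^{-1}$ of type $(q,p_1,\dots,p_n,q)$ — a reduced word of length $n+2$ whose first and last letters agree. That is still a $p_1=p_n$ situation, so instead I would be more careful: conjugate by $g\in E^*_{p_1}$ chosen so that $g\gamma_1\notin A$... but there is no $A$ here, this is a free product, so in fact $g\gamma_1\ne 1$ automatically unless $g=\gamma_1^{-1}$, and $E^*_{p_1}$ has at least two elements \emph{unless} $E_{p_1}\cong\Z/2\Z$. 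When $|E_{p_1}|\ge 3$ pick $g\in E^*_{p_1}$ with $g\ne\gamma_1^{-1}$; then $g\gamma g^{-1}$ has type $(p_1,\dots,p_{n-1},p_1)$ if $\gamma_n g^{-1}\ne 1$, i.e. of shorter length $n$ but again cyclically bad unless we instead arrange $g\gamma_n^{-1}\notin\{1\}$ cancels — the clean statement is: conjugating by a generic $g\in E_{p_1}^*$ produces an element of type starting and ending with $p_1$ of length $n$, and one more conjugation by $h\in E_{p_2}^*$ (length $\ge 2$, since $n\ge2$ here) moves $p_1$ to $p_2$ at one end. Iterating, I can always reach an element whose type has distinct first and last entries, to which the first paragraph applies; and conjugate elements act nontrivially iff the original does.

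The genuinely delicate points, which I would treat separately, are the small cases: $n=1$, and the groups $E_p\cong\Z/2\Z$. If $n=1$, $\gamma=\gamma_1\in E^*_{p_1}$; conjugating by $h\in E^*_q$ with $q\ne p_1$ gives $h\gamma_1 h^{-1}$ of type $(q,p_1,q)$, which has equal ends. Conjugating once more by $g\in E^*_q$ with $g\ne(h\gamma_1 h^{-1})$-tail... this loops. The correct fix is: conjugate $\gamma_1$ by $h\in E^*_{q}$ to get type $(q,p_1,q)$, then conjugate by $h'\in E^*_{p_1}$, $h'\ne\gamma_1^{-1}$ when $|E_{p_1}|\ge3$ — only possible when $E_{p_1}\ne\Z/2\Z$ — yielding type $(p_1,q,p_1,q,p_1)$ or, after cancellation at the joints, type $(q,p_1,q)$ again; to actually break the symmetry one uses that $\Gamma$ is \emph{nondihedral}, so not all $E_p$ are $\Z/2\Z$ with $\Card\,P=2$: there is some $E_{p_0}$ with $|E_{p_0}|\ge3$ or $\Card\,P\ge3$, and conjugating into and through that index produces a reduced word of length $\ge2$ with distinct endpoints. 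I expect this case analysis — getting from an arbitrary nontrivial $\gamma$ to a conjugate of "$p_1\ne p_n$" type while honestly using the nontriviality and nondihedrality hypotheses — to be the main obstacle; everything else is the one-line telescoping computation with hypothesis (ii).
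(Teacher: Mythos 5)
There is a genuine gap, and it starts at what you call the ``cleanest'' case. For the telescoping computation to run, you need hypothesis (ii) at the \emph{first} step, $\gamma_n\omega\in\Omega_{p_n}$, which requires $q\neq p_n$, and again at the last step, where you need $q\neq p_1$ so that $\Omega_{p_1}$ and $\Omega_q$ are disjoint. So the direct argument works exactly when there exists $q\in P$ with $q\notin\{p_1,p_n\}$. If $\Card\,P\geq 3$ such a $q$ always exists and every $\gamma\neq 1$ is handled at once. But if $\Card\,P=2$, such a $q$ exists if and only if $p_1=p_n$ --- the \emph{opposite} of the configuration you single out. In your scenario ($P=\{1,2\}$, $p_1\neq p_n$, ``we can still choose $q\neq p_1$''), you are forced to take $q=p_n$, and then $\gamma_n\in E^*_{p_n}$ acting on $\Omega_{p_n}$ is not controlled by (ii) at all: $\gamma_n\omega$ may land outside every $\Omega_p$ and the chain never starts. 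Because the base case is misidentified, the whole reduction in your second and third paragraphs is aimed at the wrong target (conjugating toward ``distinct endpoints'' rather than toward ``equal endpoints''), and, as you yourself note, it ``loops'' and is never completed. The nondihedrality hypothesis is invoked but not actually cashed in.

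The paper's proof is short precisely because it gets this orientation right: for $\Card\,P\geq 3$ it picks $q\neq p_1$, $q\neq p_n$ and concludes directly; for $\Card\,P=2$ it applies Lemma \ref{conj} (which holds for any nontrivial nondihedral amalgamated product satisfying ${\cal H}$, and ${\cal H}$ is vacuous for a free product since $A=\{1\}$) to replace $\gamma$ by a conjugate $\gamma'$ whose type \emph{begins and ends with} $1$, then takes $x'\in\Omega_2$ and gets $\gamma'x'\in\Omega_1$, hence $\gamma'x'\neq x'$. To repair your write-up, swap the roles of the two configurations and delegate the conjugation bookkeeping (including the $n=1$ and $\Z/2\Z$ issues you struggle with) to Lemma \ref{conj}, which is exactly where the nontriviality and nondihedrality hypotheses are consumed.
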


\noindent The hypothesis (ii) is called a Ping-Pong hypothesis.

\begin{proof} Let $\gamma\in\Gamma$ with $\gamma\neq 1$,
and let ${\bf p}=(p_1,\dots, p_n)$ be its type. 
We claim that there is $x\in \Omega$ such that $\gamma.x\neq x$

If $\Card\,P\geq 3$ let $q\in P$ with 
$q\neq p_1$ and $q\neq p_n$. Let $x\in \Omega_q$.
By the Ping-Pong hypothesis,  $\gamma.x$ belongs to
$\Omega_{p_1}$, therefore $\gamma.x\neq x$. 

If $\Card\,P=2$ it can be assumed that $P=\{1,2\}$. It follows from Lemma \ref{conj} that $\gamma$ has a conjugate $\gamma'$ of
type ${\bf q}=(q_1,\dots,q_m)$ with $q_1=q_m=1$. Let 
$x'\in \Omega_2$. By the Ping-Pong hypothesis,  $\gamma'.x'$ belongs to
$\Omega_{1}$, therefore $\gamma'.x\neq x'$. It follows that there is some $x\in\Omega$ with $\gamma.x\neq x$.

In both cases, any $\gamma\neq 1$ acts nontrivially.
Hence  the action of $\Gamma$ on $\Omega$ is faithful.
 \end{proof}

\noindent {\it 6.2 Mixture of free products, amalgamated products and semi-direct product}

\noindent This section is devoted to three technical lemmas,
for groups with a mix of free products, amalgamated products and
semi-direct products.

Let $G$ be a group. A {\it $G$-structure} on a group $E$ is a $G$-action on $E$, 
where $G$ acts by  group automorphisms. Equivalently, it means that the semi-direct product 
$G\ltimes E$ is  defined. For simplicity, a group $E$ with a $G$-structure is called a 
{\it $G$-group}. Two $G$-groups $E$, $E'$  are called
{\it $G$-isomorphic} if there is an isomorphism from $E$ to $E'$ which commutes with the
$G$-structure.

Let $P$ be a set on which $G$ acts, let $E$ be a group and 
for each $p\in P$ let $E_p$ be a copy of $E$. A $G$-structure on $*_{p\in P}\, E_p$ is called
{\it compatible with the $G$-action on $P$} if $E_p^g=E_{g(p)}$ for any $g\in G$ and $p\in P$. Roughly speaking, 
it means that $G$ acts on  $*_{p\in P}\, E_p$ by permuting the free factors. When it is not ambiguous, we just say that the 
$G$-structure is {\it compatible}. 

\noindent 

Now let $G$, $U$ be two groups sharing a common group
$A$. Assume moreover that $U=A\ltimes E$,
for some normal subgroup $E$ of $U$. Set
$\Gamma_0=G*_{A}U$. The natural map 
$U\to U/E\simeq A$ induces a group morphism

\centerline {$\chi: \Gamma_0=G*_{A}U\to G=G*_A A$.}

\noindent and let $\Gamma_1$ be its kernel.

Set $P=G/A$. For $\gamma\in G$, the subgroup
$E^\gamma$ lies obviously in $\Gamma_1$ and it depends only on
$\gamma\mod A$.

\begin{lemma}\label{mixing1} The group $\Gamma_1$ is the free product
of all $E^\gamma$, where $\gamma$ runs over $P$.
\end{lemma}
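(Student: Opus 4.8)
The plan is to use the structure theory of amalgamated products (Lemma \ref{words}) together with a Ping-Pong / normal-form argument, rather than appealing to any abstract Bass--Serre machinery. First I would fix a set $T$ of representatives of $P = G/A$ containing $1$, so that every element of $G$ has a unique expression $\gamma a$ with $\gamma \in T$ and $a \in A$. Then, using van der Kulk-style reduced words for $\Gamma_0 = G *_A U$, I would write a general element of $\Gamma_0$ as a reduced word alternating between $G \setminus A$ and $U \setminus A = A\ltimes E$ minus $A$, and push the $A$-part of each $U$-syllable to the right using the relations: an element of $U\setminus A$ can be written as $e\cdot a$ with $e \in E\setminus\{1\}$ and $a \in A$, and $a$ then commutes past the following $G$-syllable by merging into it. The upshot is that every $g \in \Gamma_0$ has a normal form $e_1^{\gamma_1} e_2^{\gamma_2}\cdots e_n^{\gamma_n}\cdot \bar\gamma$ where the $\gamma_i \in T$ are the successive partial products (hence consecutive ones are distinct as cosets, since the original word was reduced), the $e_i \in E\setminus\{1\}$, and $\bar\gamma \in G$; and $g \in \Gamma_1 = \ker\chi$ exactly when $\bar\gamma = 1$. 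This already shows that $\Gamma_1$ is generated by the subgroups $E^\gamma$, $\gamma \in P$, and that it is covered by products of the asserted form.

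Next I would prove that the natural surjection $\ast_{\gamma \in P} E^\gamma \twoheadrightarrow \Gamma_1$ is injective, i.e. that a reduced word $e_1^{\gamma_1}\cdots e_n^{\gamma_n}$ with $e_i \neq 1$ and $\gamma_i \neq \gamma_{i+1}$ in $P$ cannot be trivial in $\Gamma_0$. Here I would invoke Lemma \ref{words} directly: choosing coset representatives $\gamma_i$ for the $G$-syllables, the product $e_1^{\gamma_1}\cdots e_n^{\gamma_n}$ expands to an alternating word in $G\setminus A$ and $U\setminus A$ whose syllable sequence is visibly nontrivial and which, after collecting the $A$-parts to the right as above, is a reduced word in the sense of Section 1.3; by Lemma \ref{words} its image in $\Gamma_0$ is therefore not in $A$, in particular not $1$. (One must check that no unexpected cancellation or $A$-absorption occurs at the junctions $e_i^{\gamma_i} e_{i+1}^{\gamma_{i+1}}$; this is where the hypothesis $\gamma_i \neq \gamma_{i+1}\bmod A$ is used, guaranteeing that the innermost $G$-syllable $\gamma_i^{-1}\gamma_{i+1}$ does not lie in $A$.)

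The main obstacle I anticipate is the bookkeeping at these junctions: when one multiplies $e_i^{\gamma_i}$ by $e_{i+1}^{\gamma_{i+1}}$ and writes everything out in terms of the syllables of $\Gamma_0$, the representatives $\gamma_i, \gamma_{i+1}$ need not satisfy $\gamma_i^{-1}\gamma_{i+1}\notin A$ on the nose unless we have been careful about which coset representatives we picked; the clean way around this is to not insist the $\gamma_i$ themselves are the chosen representatives but rather track cosets, reducing the word syllable-by-syllable from the middle outward and noting that each reduction step either leaves the syllable length unchanged or shortens it in a controlled way, never producing the empty word. Once the reduced-word bijection is established, the conclusion $\Gamma_1 = \ast_{\gamma\in P} E^\gamma$ is immediate. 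An alternative, possibly shorter, route is to exhibit the action of $\Gamma_0$ on the Bass--Serre tree of the splitting $G *_A U$ and observe that $\Gamma_1$ acts with trivial edge stabilizers, so that $\Gamma_1$ is free-by-(its vertex stabilizers), the vertex stabilizers being exactly the conjugates $E^\gamma$; but since the paper works concretely with reduced words, I would keep the argument at that level.
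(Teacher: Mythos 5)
Your argument is correct, but it proves the lemma by a genuinely different route from the paper. The paper establishes Lemmas \ref{mixing1} and \ref{mixing2} simultaneously and purely formally: it forms the abstract free product $\Gamma_1'=*_{\gamma\in P}E^\gamma$ with its evident $G$-action, sets $\Gamma_0'=G\ltimes\Gamma_1'$, and uses the universal properties of amalgamated, free and semi-direct products to produce mutually inverse morphisms $\Gamma_0\to\Gamma_0'$ and $\Gamma_0'\to\Gamma_0$; the identification $\Gamma_1\simeq\Gamma_1'$ then falls out by restriction. That argument is shorter on paper but leaves to the reader the verification that the two universally-defined maps are indeed inverse. You instead work concretely inside $\Gamma_0=G*_AU$ with reduced words: writing each $U$-syllable as $ea$ with $e\in E\setminus\{1\}$, pushing the $A$-parts rightward to reach the normal form $e_1^{\gamma_1}\cdots e_n^{\gamma_n}\bar\gamma$ with $\gamma_i\neq\gamma_{i+1}$ in $P$ and $\chi(g)=\bar\gamma$, which gives generation; and for injectivity expanding $e_1^{\gamma_1}\cdots e_n^{\gamma_n}$ into an alternating word whose interior $G$-syllables $\gamma_i^{-1}\gamma_{i+1}$ lie outside $A$, so that Lemma \ref{words} (more precisely, the fact recorded in Section 1.3 that a nonempty alternating product of syllables from $G^*$ and $U^*$ never lies in $A$) forbids it from being trivial. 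Your handling of the junctions is the right point to worry about and is dealt with correctly — the only boundary case is that the extreme syllables $\gamma_1$ and $\gamma_n^{-1}$ may lie in $A$, in which case they are absorbed into the adjacent $U^*$-syllables without changing the conclusion. The trade-off is that your proof is more elementary and self-contained (and closer in spirit to the Ping-Pong arguments of Section 6), at the cost of some bookkeeping, while the paper's categorical proof also delivers Lemma \ref{mixing2} in the same breath.
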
 

\begin{lemma}\label{mixing2} For each $p\in P$, let $E_p$ be
a copy of $E$. A compatible $G$-structure on $*_{p\in P} E_p$
obviously provides a $A$-structure on $E$, and we have

\centerline{$G\ltimes (*_{p\in P} E_p)\simeq 
G*_{A} (A\ltimes E)$.}
\end{lemma}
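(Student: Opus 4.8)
The plan is to exhibit an explicit isomorphism and check it is well-defined and bijective by invoking Lemma \ref{mixing1}. First I would fix a set $P = G/A$ with the given $G$-action, fix copies $E_p$ of $E$ for $p \in P$, and assume we are given a compatible $G$-structure on $F := *_{p\in P} E_p$, so that $E_p^g = E_{g(p)}$ for all $g \in G$. Let $o \in P$ denote the coset of the identity; the stabilizer of $o$ in $G$ is exactly $A$, so the compatibility condition restricts to an $A$-action on the free factor $E_o$, which I identify with $E$. This gives $E$ its $A$-structure, and hence the semidirect product $A \ltimes E$ is defined and sits inside $G \ltimes F$ as the subgroup generated by $A$ and $E_o$.

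Next I would construct the map $G *_A (A \ltimes E) \to G \ltimes F$. On the factor $G$ it is the inclusion $G \hookrightarrow G \ltimes F$; on the factor $A \ltimes E$ it is the composite $A \ltimes E \hookrightarrow G \ltimes F$ described above. These two maps agree on the common subgroup $A$ (both send $A$ into the copy of $A$ inside $G \ltimes F$), so by the universal property of the amalgamated product they glue to a homomorphism $\Phi : G *_A (A \ltimes E) \to G \ltimes F$. To see $\Phi$ is surjective, note that its image contains $G$ and $E_o$, hence contains $E_o^g = E_{g(o)}$ for every $g \in G$, hence contains every $E_p$; so the image contains $G$ and all free factors, which generate $G \ltimes F$.

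For injectivity I would compare $\Phi$ with the decomposition coming from Lemma \ref{mixing1}. Apply that lemma with the roles played here: set $\Gamma_0 = G *_A (A\ltimes E)$, let $\chi : \Gamma_0 \to G$ be the retraction induced by $A \ltimes E \to (A\ltimes E)/E \cong A$, and let $\Gamma_1 = \ker \chi$. Lemma \ref{mixing1} says $\Gamma_1 = *_{\gamma \in P} E^\gamma$, the free product of the conjugates $E^\gamma$ of $E \subset \Gamma_0$ indexed by $\gamma \in G/A = P$. On the other side, $G \ltimes F$ has the analogous retraction onto $G$ with kernel $F = *_{p\in P} E_p$. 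The map $\Phi$ is compatible with both retractions (it sends $\chi$ to the projection $G\ltimes F \to G$), so it restricts to a map $\Gamma_1 \to F$ which, on each free factor $E^\gamma$, is exactly the isomorphism $E^\gamma \to E_{\gamma(o)}$ coming from compatibility of the $G$-structure; since $\gamma \mapsto \gamma(o)$ is a bijection $P \to P$, this is an isomorphism of free products $\Gamma_1 \xrightarrow{\sim} F$. As $\Phi$ is the identity on $G$ and an isomorphism on kernels of compatible retractions, a short diagram chase (using that $\Gamma_0 = G \ltimes \Gamma_1$ and $G\ltimes F = G \ltimes F$ as split extensions of $G$) shows $\Phi$ is an isomorphism. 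The only real subtlety is bookkeeping the two $G$-actions so that the restriction to kernels genuinely matches the free-product isomorphism factor-by-factor; once the indexing by $P$ is set up carefully this is immediate from Lemma \ref{mixing1}.
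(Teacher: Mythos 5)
Your proposal is correct and follows essentially the same route as the paper: the paper proves Lemmas \ref{mixing1} and \ref{mixing2} together by building the comparison morphism between $G*_A(A\ltimes E)$ and $G\ltimes(*_{p\in P}E_p)$ from the universal properties of amalgamated, free and semi-direct products, which is exactly your $\Phi$. The only cosmetic difference is that the paper exhibits an explicit inverse $\psi$ via the same universal properties, whereas you deduce bijectivity from surjectivity on generators plus the identification of the kernels as free products (Lemma \ref{mixing1}) and a five-lemma argument on the split extensions; both are valid and interchangeable.
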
 

\begin{proof} Set $\Gamma'_1=*_{\gamma\in P}\, E^\gamma$.
Clearly $G$ acts over $\Gamma'_1$, so we can consider

\centerline{$\Gamma'_0:=G\ltimes \Gamma'_1$.}

\noindent Using the universal properties of amalgamated, free and
semi-direct products, one defines morphisms
$\phi:\Gamma_0\to \Gamma'_0$ and $\psi:\Gamma'_0\to \Gamma_0$
which are inverse of each other. It follows that 
$\Gamma_0$ and $\Gamma'_0$ are isomorphic, and 
$\phi$ induces an isomorphism from $\Gamma_1$
to $\Gamma_1'$, which proves Lemma \ref{mixing1}.

Let $1$ be the distinguished point of
$G/A$. We have $E_1^a=E_1$ for any $a\in A$, hence the group
$E=E_1$ has an $A$-structure. The rest of the proof 
of Lemma \ref{mixing2} follows from
universal properties, as before.

\end{proof}

For the last lemma, let $G$ be a group acting on a set $P$
and  let $E$ and $E'$ be two other groups. For each $p\in P$, let $G_p$ be the stabilizer in $G$ of $p$ and let
$E_p$ (respectively $E'_p$) be a copy of $E$ (respectively of
$E'$). 

Assume given some compatible $G$-structures on 
$*_{p\in P}\,E_p$ and $*_{p\in P}\,E'_p$. Obviously, it
provides some $G_p$-structure on  $E_p$ and $E'_p$, for any $p\in P$.

\begin{lemma}\label{mixing3} Assume that the groups $E_p$ and $E'_p$ are $G_p$-isomorphic 
for any $p\in P$. Then the groups $G\ltimes(*_{p\in P}\,E_p)$ and 
$G\ltimes(  *_{p\in P}\,E'_p)$ are isomorphic.
\end{lemma}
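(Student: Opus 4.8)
The plan is to build the isomorphism directly from the data of a $G_p$-isomorphism for each $p$, using the universal properties of free products and semi-direct products, exactly in the spirit of the proofs of Lemmas \ref{mixing1} and \ref{mixing2}. First I would fix, for each $p\in P$, a $G_p$-equivariant isomorphism $f_p\colon E_p\to E'_p$; these exist by hypothesis. The issue is that $f_p$ is only equivariant for the stabilizer $G_p$, not for all of $G$, so one cannot simply take the $f_p$ "in parallel" and expect a $G$-equivariant map. The standard fix is to choose a set of coset representatives: pick a section $s\colon P\to G$ of the orbit map (choosing one $s(p)$ in each class, with $s(p_0)=1$ on a distinguished point of each orbit), and then transport $f_{p_0}$ around the orbit using the $G$-action. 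Concretely, for $p$ in the orbit of $p_0$ write $p=s(p)\cdot p_0$ and define $f_p := s(p)\circ f_{p_0}\circ s(p)^{-1}$ as a map $E_p\to E'_p$ (using that the $G$-structures on both free products are compatible, so $s(p)$ carries $E_{p_0}$ to $E_p$ and $E'_{p_0}$ to $E'_p$).

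Next I would check that the resulting family $(f_p)_{p\in P}$ is genuinely $G$-equivariant: for $g\in G$ and $p$ in the orbit of $p_0$, both $g\circ f_p$ and $f_{g(p)}\circ g$ are maps $E_p\to E'_{g(p)}$, and they agree because $s(g(p))^{-1}\,g\,s(p)\in G_{p_0}$ and $f_{p_0}$ is $G_{p_0}$-equivariant — this is the one small computation that makes the construction work, and it is where the hypothesis is used in full. By the universal property of the free product, the $f_p$ assemble into an isomorphism $F\colon *_{p\in P}E_p \to *_{p\in P}E'_p$, and the equivariance just verified says $F$ commutes with the $G$-structures. Then $\id_G\ltimes F$ is the desired isomorphism $G\ltimes(*_{p\in P}E_p)\to G\ltimes(*_{p\in P}E'_p)$, by the universal property of the semi-direct product (a $G$-equivariant isomorphism of the normal factor extends to an isomorphism of the semi-direct product fixing $G$).

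The main obstacle — really the only point requiring care — is the equivariance check for $F$, because the chosen isomorphisms $f_p$ are a priori only stabilizer-equivariant and the coset-representative bookkeeping must be set up so that the two composites $g\circ f_p$ and $f_{g(p)}\circ g$ literally coincide. Once the representatives are chosen consistently within each $G$-orbit on $P$, everything else is a formal consequence of universal properties, exactly as in the proof of Lemma \ref{mixing2}. I would therefore state the proof in that compressed style: fix orbit representatives, transport the $f_p$, verify the cocycle-type identity $s(g(p))^{-1}g\,s(p)\in G_{p_0}$ that gives $G$-equivariance, and conclude by the universal properties of the free and semi-direct products.

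\begin{proof}
Fix a set of representatives for the $G$-orbits on $P$; for each such representative $p_0$ let $f_{p_0}\colon E_{p_0}\to E'_{p_0}$ be a $G_{p_0}$-equivariant isomorphism, which exists by hypothesis. For an arbitrary $p\in P$, choose $s(p)\in G$ with $s(p).p_0=p$, where $p_0$ is the representative of the orbit of $p$, taking $s(p_0)=1$. Since the $G$-structures on $*_{p\in P}E_p$ and $*_{p\in P}E'_p$ are compatible, $s(p)$ maps $E_{p_0}$ onto $E_p$ and $E'_{p_0}$ onto $E'_p$, so we may set
\[
f_p:=s(p)\circ f_{p_0}\circ s(p)^{-1}\colon E_p\to E'_p,
\]
an isomorphism. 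We check that the family $(f_p)_{p\in P}$ is $G$-equivariant. Let $g\in G$ and $p\in P$, with $p_0$ the orbit representative; then $g(p)$ lies in the same orbit, and $h:=s(g(p))^{-1}\,g\,s(p)$ fixes $p_0$, hence $h\in G_{p_0}$. Using $g\,s(p)=s(g(p))\,h$ and the $G_{p_0}$-equivariance of $f_{p_0}$,
\[
g\circ f_p=g\,s(p)\circ f_{p_0}\circ s(p)^{-1}
= s(g(p))\,h\circ f_{p_0}\circ h^{-1}\circ s(g(p))^{-1}\,g
= s(g(p))\circ f_{p_0}\circ s(g(p))^{-1}\circ g=f_{g(p)}\circ g,
\]
as maps $E_p\to E'_{g(p)}$. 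By the universal property of the free product, the $f_p$ assemble into an isomorphism $F\colon *_{p\in P}E_p\to *_{p\in P}E'_p$, and the identity just proved shows that $F$ commutes with the $G$-structures. Therefore $\id_G\ltimes F$ is a well-defined isomorphism
\[
G\ltimes\bigl(*_{p\in P}E_p\bigr)\;\longrightarrow\;G\ltimes\bigl(*_{p\in P}E'_p\bigr),
\]
by the universal property of the semi-direct product.
\end{proof}
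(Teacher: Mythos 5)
Your proof is correct, and it takes a different route from the paper's. You construct the isomorphism directly: fix orbit representatives, transport each $f_{p_0}$ around its orbit by conjugating with coset representatives $s(p)$, and verify $G$-equivariance via the identity $s(g(p))^{-1}\,g\,s(p)\in G_{p_0}$ — your computation is right, and the whole argument rests only on the universal properties of the free and semi-direct products. The paper instead reduces to the case where $G$ acts transitively on $P$ and invokes Lemma \ref{mixing2} to rewrite both sides as amalgamated products $G*_{A}(A\ltimes E_p)$ and $G*_{A}(A\ltimes E'_p)$ with $A=G_p$; the $A$-isomorphism $E_p\simeq E'_p$ then immediately yields an isomorphism of these amalgamated products, and the general case follows by splitting the free product over orbits. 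The two arguments have the same mathematical content — Lemma \ref{mixing2} is precisely the device that packages the coset-representative bookkeeping you carry out by hand — but yours is self-contained and makes the cocycle-type equivariance check explicit, whereas the paper's is shorter given the earlier lemma and leaves that check implicit in the universal-property formalism. One small remark: you only use the hypothesis at the orbit representatives $p_0$, which suffices since your construction then produces $G_p$-equivariant isomorphisms at every $p$; this is a mild (harmless) strengthening of the statement.
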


\begin{proof} First, assume that $G$ acts transitively on 
$P$. Let $p$ be a point of $P$, 
and let $A:=G_p$ be its stabilizer. It follows from Lemma \ref{mixing2} that

\centerline{$G\ltimes(*_{p\in P}\,E_p)   \simeq G*_{A}
(A\ltimes E_p))$, and}

\centerline{$G\ltimes(*_{p\in P}\,E'_p)\simeq G*_{A}
(A\ltimes E'_p)$.}

\noindent Hence  the groups 
$*_{p\in P}\,E_p$ and $*_{p\in P}\,E'_p$ are $G$-isomorphic. 
From this,  it follows that the groups 
$*_{p\in P}\,E_p$ and $*_{p\in P}\,E'_p$ are 
$G$-isomorphic even if $G$ does not act transitively on $P$. Therefore $G\ltimes(*_{p\in P}\,E_p)$ and 
$G\ltimes(  *_{p\in P}\,E'_p)$ are isomorphic.
\end{proof}

\bigskip
\noindent
{\it 6.3 The subgroups of $\GL_S(2,K[t])$ in $\GL(2,K[t])$}

\noindent For $G(t)\in \GL(2,K[t])$, let $G(0)$ its evaluation at $0$. For any subgroup $S\subset \GL(2,K)$
set

\centerline
{$\GL_S(2,K[t]):=\{G(t)\in \GL(2,K[t])\vert G(0)\in S\}$.}

\noindent For $S=\{1\}$, the  group $\GL_S(2,K[t])$ will be denoted by
$\GL_1(2,K[t])$.

For any $\gamma\in\P^1_K$, let $e_\gamma\in\End(K^2)$ such that
$e_\gamma^2=0$ and $\Image e_\gamma=\gamma$. Since
$e_\gamma$ is unique up to a constant multiple, the group

\centerline{$E_\gamma:=\{ \id + tf(t) e_\gamma \vert\,f\in K[t]\}$}

\noindent is a well defined subgroup of $\GL_1(2,K[t])$.

\begin{lemma}\label{generation} The group $\GL_1(2,K[t])$ is generated by its subgroups $E_\gamma$, where $\gamma$ runs over $\P^1_K$.
\end{lemma}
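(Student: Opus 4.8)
\textbf{Proof proposal for Lemma \ref{generation}.}

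The plan is to take an arbitrary $M(t)\in \GL_1(2,K[t])$, so $M(0)=\id$, and to reduce it by left multiplication by elements of the various $E_\gamma$ until nothing is left. Write $M(t)=\id + t N(t)$ with $N(t)\in M_2(K[t])$. The strategy is an induction on the $t$-degree of the entries, or equivalently on $\deg\det$ together with the degree of $M$ itself; the base case is $M=\id$, which is the empty product. The key observation is that for a fixed line $\gamma\in\P^1_K$, multiplying $M$ on the left by $\id + t f(t) e_\gamma$ changes $M$ by adding $t f(t) e_\gamma M(t)$, and since $e_\gamma$ has rank one with image $\gamma$, this affects only the ``$\gamma$-component'' of $M$ (more precisely, writing things in a basis adapted to $\gamma$, it adds a multiple of one row to another). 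So by choosing $\gamma$ and $f$ appropriately we can perform elementary row operations of a restricted type and cancel top-degree terms.

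Concretely, I would first handle the case where $M(t)$ is upper or lower triangular: if $M(t)=\begin{pmatrix} a(t) & b(t)\\ 0 & d(t)\end{pmatrix}$ with $a(0)=d(0)=1$ and $a(t)d(t)$ a unit in $K[t]$, hence $a,d\in K^*$ (constants), then $a=d=1$ since $M(0)=\id$, and $M=\id + t(b(t)/t)\cdot$(the nilpotent with image the first coordinate line), which lies in $E_{\gamma_0}$ for $\gamma_0$ the first coordinate axis --- note $b(0)$ need not vanish, wait: $M(0)=\id$ forces $b(0)=0$, so $b(t)=t b_1(t)$ and indeed $M\in E_{\gamma_0}$. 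The general case is reduced to the triangular case: given arbitrary $M(t)$ with $M(0)=\id$, I want to kill the lower-left entry. Pick the line $\gamma$ so that $e_\gamma$ sends the second basis vector to (a multiple of) the first; then left multiplication by $\id + t f(t)e_\gamma$ adds $t f(t)$ times the second row of $M$ to the first row. Symmetrically, using the line $\gamma'$ = first coordinate axis, we can add $t g(t)$ times the first row to the second row. These two operations generate, on the two rows of $M$, the action of $\GL_1$ (or rather $\mathrm{SAut}$-type elementary moves) and by a Euclidean-algorithm argument in $K[t]$ --- using that $\det M(t)\in K^*$, so the entries of any row are coprime up to the unit determinant --- we can bring the lower-left entry to $0$, landing in the triangular case.

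The main obstacle, and the step deserving real care, is the degree-bookkeeping in this Euclidean reduction: one must check that the ``add $tf(t)\times$ row'' operations, which always introduce a factor of $t$, still suffice to run the division algorithm on $K[t]$-entries, and that the process terminates. The point is that $M(0)=\id$ already guarantees every off-diagonal entry is divisible by $t$ and every diagonal entry is $1$ modulo $t$, so the ``forbidden'' constant-coefficient moves are never needed; the reduction happens entirely among entries that are multiples of $t$ (for the off-diagonal positions) against the unit diagonal, and the determinant being a nonzero constant prevents the entries of a row from having a common non-unit factor, so the gcd is a unit and the algorithm drives the off-diagonal entry to zero. Once one row operation has cleared the $(2,1)$ entry we are in the triangular case handled above, and multiplying through by the finitely many $E_\gamma$-factors used recovers $M$; hence the $E_\gamma$ generate $\GL_1(2,K[t])$.
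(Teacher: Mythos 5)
Your reduction to triangular form is where the argument breaks down. In the Euclidean step you only ever use the two coordinate lines, i.e.\ the operations "add $tf(t)$ times one row to the other row" with $f\in K[t]$; equivalently, you work entirely inside the subgroup generated by $E_{\gamma_1}$ and $E_{\gamma_2}$ for the two coordinate axes. That subgroup is \emph{proper}. Concretely, let $\delta$ be the line spanned by $(1,1)$ and $M=\id+te_\delta=\begin{pmatrix}1+t&-t\\ t&1-t\end{pmatrix}$, which has determinant $1$ and lies in $\GL_1(2,K[t])$. To clear its $(2,1)$ entry against the first row $(1+t,\,-t)$ you would need a Bezout combination such as $1\cdot(1+t)-1\cdot t=1$, whose coefficients are units rather than multiples of $t$; but your allowed multipliers all lie in $tK[t]$, so the restricted row operations can never realize it. More decisively, the ping-pong argument used for Lemma \ref{Tits} (which does not depend on Lemma \ref{generation}) shows that $\langle E_{\gamma_1},E_{\gamma_2},E_\delta\rangle$ is the free product $E_{\gamma_1}*E_{\gamma_2}*E_\delta$, so $\langle E_{\gamma_1},E_{\gamma_2}\rangle=E_{\gamma_1}*E_{\gamma_2}$ contains no nontrivial element of $E_\delta$, in particular not $M$. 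Thus "the gcd is a unit" is true but irrelevant: the constant-coefficient moves you correctly identify as forbidden are exactly the moves the division algorithm needs. The content of the lemma is precisely that \emph{all} lines $\gamma\in\P^1_K$ are required, and for $\gamma$ not a coordinate axis the rank-one nilpotent $e_\gamma$ (whose image \emph{and} kernel both equal $\gamma$) is not a row operation in any fixed basis.

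What is missing is a rule for choosing, at each step, \emph{which} line $\delta$ and which multiplier to use so that the degree actually drops. The paper extracts this from the determinant: writing $G(t)=\sum_n A_nt^n$ of degree $N$ and expanding $\det G(t)=\sum_n c_nt^n$ with $c_n=0$ for $n>0$, the relation $c_{2N}=0$ forces $A_N$ to have rank one; its image $\delta=\Image A_N$ is the line to use, and the vanishing of a further coefficient yields $\langle A_n\,\vert\, A_N\rangle=0$ for the largest $n$ with $\Image A_n\not\subset\delta$, whence $ce_\delta\circ A_n=A_N$ for some constant $c$ and left multiplication by $\id-ct^{N-n}e_\delta\in E_\delta$ strictly lowers the degree. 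If you want to keep a row-reduction picture, you must at least build in this basis-dependent choice of $\delta$ from the leading coefficient and prove the degree decrease; as written, the induction does not close.
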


\begin{proof} 

For $A$, $B\in\End(K^2)$, set

\centerline{$<A\vert \,B>=\det\,(A+B) -\det\,A-\det\,B$.}

\noindent Any element $G(t)\in \GL_1(2,K[t])$ can be written as a
polynomial

\centerline{$G(t)=\sum_{n\geq 0}\, A_n t^n$}

\noindent where $A_n$ belong to $\End(K^2)$ and $A_0=\id$. 

The proof runs by induction on the degree $N$ of $G$. 
It can be assumed that $N\geq 1$. For any $n\geq 0$, let
$I_n$ be the set of pairs of integers $(i,j)$
with $0\leq i<j$ and $i+j=n$. We have
$\det\,G(t)=\sum_{n\geq 0}\, c_n t^n$, where the scalars $c_n$ are given by

\centerline{$c_n=\det\,A_{n/2} + \sum_{(i,j)\in I_n}\, <A_i\vert A_j>$,}

\noindent where it is understood that  $\det\,A_{n/2}=0$ if $n$ is odd.

Since $\det\,G(t)$ is an invertible polynomial, we have $c_n=0$ for any $n>0$.
The identity $c_{2N}=0$ implies that $\det\,A_N=0$ hence $A_N$ has rank one.
Let $\delta$ be the image of $\,A_N$ and 
set $E=\{a\in\End(K^2)\vert\,\Image\,a\subset\delta\}$

There exist an integer $n$ such that

\centerline{$A_n\notin E$,
but $A_m\in E$ for any $m>n$.}

We have

\centerline{$0=c_{N+n}=\det\,A_{N+n\over 2} +\sum_{(i,j)\in I_{N+n}}\,<A_i\vert A_j>$}

\noindent Since $E$ consists of rank-one endomorphisms, we have 
$\det a =0$ for any $a\in E$ and $<a,b>=0$ for any $a,\,b\in E$.
Therefore we have $\det\,A_{N+n\over 2}=0$ and $<A_i\vert A_j>=0$, whenever
$(i,j)$ lies in $I_{N+n}$ and $i\neq n$. Thus it follows that

\centerline{$<A_n\vert \,A_N>=0$.}

Set $\delta'=\Ker A_N$.  An easy computation shows that the previous relation 
$<A_N\vert \,A_n>=0$ implies
that $A_n(\delta')\subset \delta$. Set $B=e_\delta\circ A_n$. 
Clearly we have $\Ker \,B\supset\delta'$ and $\Image\,B\subset\delta$, therefore
$B$ is proportional to $A_N$. Since $A_n$ is not in $E$, we have $B\neq 0$ and therefore
$ce_\delta\circ A_n=A_N$ for some $c\in K$.
Set 

\centerline{$H(t)= (1-ct^{N-n} e_\delta).G(t)$.}

\noindent We have $e_{\delta}.a=0$ for any $a\in E$, therefore we have
$e_\delta.A_m=0$ for any $m>n$. It follows that 
$H(t)$ has degree $\leq N$. Moreover, its degree $N$ component
is $A_N-ce_\gamma.A_n=0$. Therefore $H(t)$ has degree
$<N$, and the proof runs by induction.
\end{proof}

\bigskip
\noindent
{\it 6.4 Free products in $\GL_1(K[t])$}

\noindent Let $K$ be a field. 

\begin{lemma}\label{Tits}  We have 

\centerline{$\GL_1(2,K[t])= *_{\delta\in\P^1_K}\,E_\delta$.} 
\end{lemma}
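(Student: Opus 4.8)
The plan is to show that the generating subgroups $E_\delta$ of $\GL_1(2,K[t])$, which by Lemma \ref{generation} generate the whole group, are in ``free position'', i.e. the natural map $*_{\delta\in\P^1_K}\,E_\delta \to \GL_1(2,K[t])$ is not only surjective but injective. Since surjectivity is Lemma \ref{generation}, the content is injectivity, and for that I would apply the Ping-Pong Lemma (Lemma \ref{Ping-Pong}) with $\Gamma = *_{\delta\in\P^1_K}\,E_\delta$ acting on a suitable set $\Omega$. The free product is nontrivial and nondihedral provided $\Card\,\P^1_K\geq 3$, which holds for every field $K$ (even $\F_2$ has $|\P^1|=3$); so the hypotheses of Lemma \ref{Ping-Pong} are available once we produce disjoint nonempty ping-pong sets $\Omega_\delta$.

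The natural choice for $\Omega$ is the projective line over the field of Laurent series $K((t))$, or equivalently the set of $K[[t]]$-lattices in $K((t))^2$; more concretely, I would let $\GL(2,K[t])$ act on $\P^1(K((t)))$ via its inclusion into $\GL(2,K((t)))$, and take
\[
\Omega_\delta = \{\,[v]\in\P^1(K((t))) : v \equiv w \pmod{t\,K[[t]]^2}\ \text{for some } 0\neq w\in\delta\,\},
\]
the ``reduction mod $t$ lands in the line $\delta\subset K^2\subset K((t))^2$'' condition. These sets are disjoint because two distinct lines $\delta\neq\delta'$ in $K^2$ are disjoint, and the reduction mod $t$ of a primitive vector is well-defined up to $K^*$-scalar, hence lands in a single line; they are nonempty since $\delta$ itself gives points. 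Now take a nontrivial element $g = \id + t f(t) e_\delta \in E_\delta$, so $g - \id = t f(t) e_\delta$ with $e_\delta^2 = 0$ and $\Image e_\delta = \delta$. For $[v]\in\Omega_{\delta'}$ with $\delta'\neq\delta$, write $v = w + t v'$ with $0\neq w\in\delta'$; then $g v = w + t v' + t f(t) e_\delta(w + t v')$. The key computation is that $e_\delta(w)\neq 0$ for $w\in\delta'\setminus 0$ when $\delta'\neq\delta = \Image e_\delta = \Ker e_\delta$ (the latter equality holds since $e_\delta^2=0$ forces $\Image\subset\Ker$ and both are one-dimensional), so $g v = w + t\,(\text{stuff})$ with the $t$-coefficient now containing the nonzero vector $f(0)e_\delta(w)$ if $f(0)\neq 0$; but more robustly, one factors out the appropriate power of $t$ to see that $[gv]$, reduced mod $t$, lies in $\delta$ — that is, $g v \in t^j(\delta + tK[[t]]^2)$ after clearing denominators, giving $[gv]\in\Omega_\delta$. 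This is precisely the ping-pong hypothesis $E_\delta^*\cdot\Omega_{\delta'}\subset\Omega_\delta$.

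The main obstacle I anticipate is making the ping-pong computation uniformly correct at all valuations: because $f(t)$ may vanish to some order at $t=0$, one must track the $t$-adic valuation of $g v$ carefully and argue that after dividing by the correct power of $t$ the reduction mod $t$ lands in $\delta$ and not in some other line. The clean way to handle this is to phrase everything in terms of the valuation: if $v$ has a representative in $K[[t]]^2$ that is primitive (not divisible by $t$) with $v(0)\in\delta'$, then since $e_\delta$ restricted to $\delta'$ is injective, $e_\delta(v)$ is again primitive with $e_\delta(v)(0) = e_\delta(v(0)) \in \delta$; and $g v = v + t f(t) e_\delta(v)$, so if $f(t) = t^k u(t)$ with $u(0)\neq 0$, then $gv = v + t^{k+1}u(t)e_\delta(v)$, and one checks the leading behaviour — actually the subtlety is that the $v$-term dominates, so $[gv]$ still reduces to $[v(0)]\in\delta'$, which would break ping-pong!

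So I would instead choose $\Omega$ more cleverly: rather than $\P^1(K((t)))$, use the action on the coset space $\GL(2,K((t)))/\GL(2,K[[t]])$ at the place $t=\infty$, or better, pass to the Bruhat--Tits tree for $\SL_2$ over $K((t))$ (or $K((1/t))$) and let the $E_\delta$ play ping-pong on ends of the tree; this is exactly the setting Tits used (as the introduction signals, citing \cite{T72}). Concretely, working over $K((1/s))$ with $s = t$, an element $\id + tf(t)e_\delta$ has large $s$-adic... — alternatively and most cleanly, I would define $\Omega$ = set of $K[t]$-points and track \emph{degrees}: for $\delta\in\P^1_K$, let $\Omega_\delta\subset\P^1(K(t))$ consist of those $[v]$, $v\in K[t]^2$ primitive, whose top-degree coefficient spans $\delta$; then for $g = \id + tf(t)e_\delta$ nontrivial and $[v]\in\Omega_{\delta'}$ with $\delta'\neq\delta$, the top-degree term of $gv = v + tf(t)e_\delta(v)$ is the top term of $tf(t)e_\delta(v)$, which spans $e_\delta(\text{top of }v) \in \delta$ (nonzero since $\delta'\not\subset\Ker e_\delta=\delta$), provided $\deg(tf(t)e_\delta v) > \deg v$, i.e. provided $\deg f \geq 0$ — which is automatic since $f\in K[t]$ and $tf(t)$ has positive degree whenever $f\neq 0$. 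Thus $[gv]\in\Omega_\delta$, giving ping-pong; the $\Omega_\delta$ are disjoint since distinct lines are disjoint, and nonempty. Lemma \ref{Ping-Pong} then yields that $\Gamma = *_\delta E_\delta$ acts faithfully on $\P^1(K(t))$, so the map $*_\delta E_\delta \to \GL_1(2,K[t])$ is injective; combined with surjectivity from Lemma \ref{generation}, we conclude $\GL_1(2,K[t]) = *_{\delta\in\P^1_K}\,E_\delta$.
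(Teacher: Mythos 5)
Your final argument is correct and is essentially the paper's own proof: the paper also runs the Ping-Pong Lemma on (non-projectivized) vectors in $K[t]^2\setminus\{0\}$, with $\Omega_\delta$ defined by the condition that the highest-degree component lies in $\delta$, and the same computation $\hc(H(t)v)=c\,e_\delta\,\hc(v)$ for $H\in E_\delta^*$ and $v\in\Omega_{\delta'}$, $\delta'\neq\delta$. Your initial detour through reduction mod $t$ is correctly discarded, and the degree-tracking version you settle on matches the paper.
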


\begin{proof} Set $\Omega:=K[t]^2\setminus\{0\}$. Any $v\in\Omega$
can be written as a finite sum $v=\sum_{0\leq k}\,v_k\otimes t^k$,
where  $v_k$ lies in $K^2$. The biggest integer $n$ with 
$v_n\neq 0$ is the {\it degree} of $v$ and 
$\hc(v):=v_n$ is called its highest component.
For any $\delta\in \P^1_K$, set 
$E^*_\delta=E_\delta\setminus\{1\}$ and

\centerline
{$\Omega_\delta=\{v\in \Omega\vert\,\, \hc(v)\in\delta\}$.}

Let $H(t)\in E^*_\delta$. Since $H(t)\neq 1$, its degree $n$
is positive and its degree $n$ component is $c.e_\delta$ for some
$c\neq 0$. Let $\delta'\in\P^1_K$ be a line distinct 
from $\delta$
and let $v\in\Omega_{\delta'}$. Since $e_\delta.u\neq 0$ for
any non-zero $u\in\delta'$, we have

\centerline{$\hc(H(t).v)=ce_\delta \hc(v)$,}

\noindent for any $v\in \Omega_{\delta'}$. It follows that
$E^*_\delta.\Omega_{\delta'}\subset\Omega_\delta$ for any
$\delta\neq\delta'$.

Therefore by Lemma \ref{Ping-Pong}, the free product
$*_{\delta\in\P^1_K}\,E_\delta$ embeds in $\GL_1(2,K[t])$.
Hence by Lemma \ref{generation}, we have
$*_{\delta\in\P^1_K}\,E_\delta=\GL_1(2,K[t])$.

\end{proof}

\bigskip

\noindent {\it Remark.}  
Set

$U^+=\{\begin{pmatrix} 1& f\\ 0&1 \end{pmatrix}
\vert\,\textnormal{ for}\, f\in K[t]\},$

$U^-=\{\begin{pmatrix} 1& 0\\ f&1 \end{pmatrix}
\vert\,\textnormal{ for}\, f\in tK[t]\},$ and

$U=\{\begin{pmatrix} 1& x\\ 0&1 \end{pmatrix}
\vert\,\textnormal{ for}\, x\in K\}$.

It is easy to show that Lemma \ref{Tits} is equivalent to
the fact that $U^+*U^-=\GL_U(2,K)$. This results in stated in the context of Kac-Moody groups in Tits notes \cite{T82}.
Since these notes are not widely distributed, let mention that an equivalent result is stated in \cite{T89}, Section 3.2 and 3.2. For Tits original proofs, see \cite{T87}.

\section{The Linear Representation of  
$\Aut_{1}\,K^2$}

In this section, 
we prove Theorem B.

\bigskip\noindent
{\it 7.1 The subgroups $F_\delta$ in $\Aut_{1}\,K^2$.}

\noindent
Let $\delta\in\P^1_K$ and let $(a,b)\in\delta$ be nonzero.
For any $f\in  K[t]$, let $\tau_\delta(f)$ be the 
automorphism

\centerline {$\tau_\delta(f): 
(x,y)\to (x+af(bx-ay), y+bf(bx-ay))$.} 

\noindent We have 
$\tau_\delta(f)\circ \tau_\delta(g)=\tau_\delta(f+g)$. Let
$F_\delta =\{\delta(f)\vert \,f\in t^2 K[t]\}$. Indeed for
$\delta_0=K(0,1)$, $\tau_{\delta_0}(f)$ is the elementary
automorphism

\centerline{$(x,y)\mapsto (x,y+f(x))$,}

\noindent and  $F_{\delta_0}$ is the group 
$\Elem_1(K):=\Elem(K)\cap\Aut_1\,K^2$.  In general, we have
$F_\delta=\Elem_1(K)^g$, for any $g\in \GL(K^2)$ such that
$g.\delta_0=\delta$.

\begin{lemma}\label{freeAut}  We have

\centerline{$\Aut_1(K^2)= *_{\delta\in\P^1_K}\,F_\delta$.}

\end{lemma}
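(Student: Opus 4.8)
The plan is to mimic the proof of Lemma \ref{Tits} almost verbatim, transporting the ``highest component'' Ping-Pong argument from $\GL_1(2,K[t])$ to $\Aut_1\,K^2$ via a degree/leading-term analysis on polynomial maps. First I would establish the analogue of Lemma \ref{generation}: that $\Aut_1\,K^2$ is generated by the subgroups $F_\delta$, $\delta\in\P^1_K$. By the van der Kulk Theorem, $\Aut\,K^2=\Aff(2,K)*_{B(K)}\Elem(K)$, and intersecting with $\Aut_1\,K^2$ (using Lemma \ref{subamal}, as $\Aut_1\,K^2$ meets $\Aff(2,K)$ in the trivial group and meets $\Elem(K)$ in $\Elem_1(K)$) shows $\Aut_1\,K^2$ is generated by $\Elem_1(K)=F_{\delta_0}$ together with the conjugates of $\Elem_1(K)$ by $\GL(K^2)$; since $\GL(K^2)$ acts transitively on $\P^1_K$ and $F_\delta=\Elem_1(K)^g$ whenever $g\delta_0=\delta$, these conjugates are exactly the $F_\delta$. (One should check that conjugating the amalgamated-product decomposition by a linear map only spreads the elementary factor over the various lines; this is where I expect to spend a little care.)

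Next I would run the Ping-Pong. Take $\Omega=K[x,y]^2\setminus\{0\}$, thought of as the space of polynomial maps $K^2\to K^2$, with $\Aut_1\,K^2$ acting by precomposition (or rather: a group element $\phi$ sends a vector field / pair of polynomials $v$ to $\phi\circ v$ minus the identity part, so that the action lands back in $\Omega$ — I will have to pin down exactly the right $\Omega$ and action so that each $F_\delta$ moves things the way the Ping-Pong needs). For $v\in\Omega$ of total degree $n$, let $\hc(v)\in K^2$ be the degree-$n$ homogeneous part evaluated appropriately, and for $\delta\in\P^1_K$ set $\Omega_\delta=\{v\in\Omega\mid \hc(v)\in\delta\}$. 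An element $\tau_\delta(f)$ with $f\in t^2K[t]\setminus\{0\}$, $\deg f=m\ge 2$, adds to $(x,y)$ the vector $(a,b)f(bx-ay)$, whose top-degree part is a nonzero multiple of $(a,b)(bx-ay)^m$, i.e.\ lies along $\delta$ and has high degree. I would then verify the Ping-Pong inclusion $F_\delta^*\cdot\Omega_{\delta'}\subset\Omega_\delta$ for $\delta\ne\delta'$: applying $\tau_\delta(f)$ to $v\in\Omega_{\delta'}$, the degree jumps because $(bx-ay)$ restricted to the line $\delta'$ is not identically zero (so composing with a degree-$m\ge 2$ polynomial strictly increases degree), and the new highest component is the $\delta$-valued top part of $(a,b)f(\hc(v)\text{-direction})$, which lies in $\delta$. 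The hypotheses (i)–(ii) of Lemma \ref{Ping-Pong} then hold: the $\Omega_\delta$ are nonempty and pairwise disjoint (different lines), and $\P^1_K$ has at least two — indeed at least three if $|K|\ge 2$, but one must handle $|K|=2$ via the nondihedral clause using Lemma \ref{conj}, noting the free product of the $F_\delta$ is nondihedral since each $F_\delta$ is torsion-free abelian of infinite rank.

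Putting these together: Lemma \ref{Ping-Pong} gives that the natural map $*_{\delta\in\P^1_K}F_\delta\to\Aut_1\,K^2$ is injective, and the generation statement gives that it is surjective, hence an isomorphism.

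The main obstacle I anticipate is purely bookkeeping about which ``action on $\Omega$'' makes the Ping-Pong inequalities come out cleanly — in the $\GL_1(2,K[t])$ case one acts on column vectors $K[t]^2$ and the highest component transforms linearly, but here the group is nonlinear, so $v$ should be taken to be the ``tail'' $\phi(x,y)-(x,y)\in (K[x,y]^2)$ and one needs the composition $\tau_\delta(f)\circ\phi$ to have its new leading term genuinely dominated by the $\tau_\delta(f)$ contribution. Concretely the key inequality is that if $\phi(x,y)=(x,y)+w(x,y)$ with $\hc(w)$ along $\delta'\ne\delta$, then $\deg\bigl((a,b)f(bx-ay)\circ\phi\bigr)=m\cdot\deg(\text{the restriction of }bx-ay)$ strictly exceeds $\deg w$, and the highest component of the composite lies in $\delta$; the subtlety is ensuring $bx-ay$ does not vanish on the relevant highest-degree part — which is exactly where $\delta\ne\delta'$ is used and where a degenerate lower-order cancellation could in principle occur. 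Once the correct normalization is fixed, each verification is a short leading-term computation, and the second half (generation from van der Kulk plus $\GL(K^2)$-transitivity on $\P^1_K$) is formal.
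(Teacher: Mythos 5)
Your proposal is essentially correct but takes a genuinely different route from the paper. The paper does not run a Ping-Pong here at all: it applies van der Kulk together with Lemma \ref{subamal} to write $\Aut_0\,K^2=\GL(2,K)*_{B_0(K)}\Elem_0(K)$, observes that $\Elem_0(K)=B_0(K)\ltimes\Elem_1(K)$, and then invokes the purely formal ``mixing'' Lemma \ref{mixing1} (proved by universal properties) to identify the kernel of $\phi\mapsto\d\phi|_{\bf 0}$ with $*_{\gamma\in\GL(2,K)/B_0(K)}\,\Elem_1(K)^\gamma=*_{\delta\in\P^1_K}F_\delta$. Your approach instead redoes the dynamics: generation by the $F_\delta$ plus a leading-term Ping-Pong on polynomial maps. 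This works — composing $\tau_\delta(f)$, $\deg f=m\ge 2$, with a map whose tail has degree $n\ge 2$ and rank-one top part along $\delta'\ne\delta$ multiplies the degree by $m$ and forces the new top part into $\delta$, exactly as in Lemma \ref{Tits} (a slightly cleaner $\Omega$ is the set of non-constant maps $K\to K^2$, acted on by post-composition, with $\hc$ the leading coefficient vector) — and it has the virtue of being self-contained and concrete, at the cost of an extra generation argument and more bookkeeping; the paper's route is shorter and also sets up Lemmas \ref{mixing2}–\ref{mixing3}, which are reused for Theorem C. One local correction: you cannot get generation from Lemma \ref{subamal}(ii) as you suggest, since its hypothesis $\Gamma'.A=\Gamma$ fails for $\Gamma'=\Aut_1\,K^2$ inside $\Aff(2,K)*_{B(K)}\Elem(K)$ (and its conclusion would wrongly give $\Aut_1\,K^2=\Elem_1(K)$); instead, argue that the subgroup generated by all $\Elem_1(K)^g$, $g\in\GL(2,K)$, is normal in $\Aut_0\,K^2$, so that together with $\GL(2,K)$ it generates $\Aut_0\,K^2$ and hence equals $\Aut_1\,K^2=\Ker(\phi\mapsto\d\phi|_{\bf 0})$. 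Also note $\Card\,\P^1_K\ge 3$ for every field, so the dihedral caveat in Lemma \ref{Ping-Pong} never arises here.
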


\begin{proof} 
First we check that $\Aut_0\,K^2$ satisfies the hypotheses of Lemma \ref{mixing1}. By van der Kulk Theorem and Lemma \ref{subamal}, we have 

\centerline{$\Aut_0\,K^2=G_1*_A\,G_2=\Gamma_0$}

\noindent where $G_1=\GL(2,K)$, $G_2=\Elem_0(K)$ and
$A=B_0(K)$ is the Borel subgroup of $\GL(2,K)$. We have
$G_2=A\ltimes E$ where $E=\Elem_1(K^2)$. Clearly the
map $\chi:G_1*_A\,G_2\to G_1$ is simply the map

\centerline{$\phi\in\Aut_0(K^2)\to \textnormal{d}\phi_{\bf 0}$,}

\noindent and its kernel is $\Gamma_1=\Aut_1(K^2)$.
Since $G_1/A=\GL(2,K)/B_0(K)=\P^1_K$, and 
$E^\gamma=F_\delta$ for any $\gamma\in \GL(2,K)$ with
$\gamma.\delta_0=\delta$,
it follows from
Lemma \ref{mixing1} that

\centerline{$\Aut_1(K^2)=*_{\gamma\in\P^1_K}\,F_\delta$.}

\end{proof}

\bigskip
\noindent{\it 7.2 The isomorphism $\Psi$}

\noindent For any $\delta\in\P^1_K$, let 
$\psi_\delta:F_\delta\to E_\delta$ be the isomorphism
defined  by

\centerline{$\psi_\delta(\tau_\delta(f))=\id +f/t\otimes e_\delta$.}

\begin{lemma}\label{Psi} The collection of isomorphisms 
$(\psi_\delta)_{\delta_{\P^1_K}}$ induces an isomorphism

\centerline{$\psi:\Aut_1\,K^2\simeq \GL_1(2,K[t])$.}

\end{lemma}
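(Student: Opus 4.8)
The plan is to deduce the statement formally from the two free--product decompositions already obtained. By Lemma~\ref{freeAut} we have $\Aut_1\,K^2=*_{\delta\in\P^1_K}\,F_\delta$, and by Lemma~\ref{Tits} we have $\GL_1(2,K[t])=*_{\delta\in\P^1_K}\,E_\delta$; both are free products over the \emph{same} index set $\P^1_K$. So once each $\psi_\delta\colon F_\delta\to E_\delta$ is known to be a group isomorphism, the universal property of the free product will provide a unique homomorphism $\psi\colon \Aut_1\,K^2\to\GL_1(2,K[t])$ restricting to $\psi_\delta$ on each factor $F_\delta$, together with a homomorphism $\psi'$ in the opposite direction restricting to $\psi_\delta^{-1}$ on each $E_\delta$; since $\psi'\circ\psi$ and $\psi\circ\psi'$ restrict to the identity on every free factor, they are the identity maps, and hence $\psi$ is an isomorphism.

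It therefore remains only to check that $\psi_\delta$ is a well--defined isomorphism for each $\delta$. First I would note that $F_\delta$ is abelian and that $f\mapsto\tau_\delta(f)$ is a group isomorphism $(t^2K[t],+)\xrightarrow{\sim}F_\delta$, by the identity $\tau_\delta(f)\circ\tau_\delta(g)=\tau_\delta(f+g)$ recorded above. Next, since $e_\delta^2=0$ one has $(\id+tf(t)e_\delta)(\id+tg(t)e_\delta)=\id+t(f+g)(t)e_\delta$, so $E_\delta$ is abelian and $h\mapsto\id+h\,e_\delta$ is a group isomorphism $(tK[t],+)\xrightarrow{\sim}E_\delta$. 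Finally, for $f\in t^2K[t]$ the element $f/t$ lies in $tK[t]$, and $f\mapsto f/t$ is a bijection $t^2K[t]\to tK[t]$ respecting addition; composing these three additive isomorphisms shows that $\psi_\delta(\tau_\delta(f))=\id+(f/t)e_\delta$ defines a group isomorphism $F_\delta\to E_\delta$.

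I do not expect a genuine obstacle here: all the real content has gone into Lemmas~\ref{freeAut} and~\ref{Tits} (which identify both groups as free products indexed by $\P^1_K$) and into the Ping--Pong argument underlying Lemma~\ref{Tits}. The only points requiring a little care are the bookkeeping in $\psi_\delta$---in particular that division by $t$ is legitimate precisely because $F_\delta$ is built from $t^2K[t]$ while $E_\delta$ is built from $tK[t]$---and the elementary but essential observation that an isomorphism between two free products exists, and is uniquely determined, as soon as one has compatible isomorphisms between the corresponding families of free factors.
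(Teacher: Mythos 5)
Your proposal is correct and follows exactly the paper's route: the paper's proof is the single sentence that the statement is a consequence of Lemmas \ref{Tits} and \ref{freeAut}, i.e.\ the two free--product decompositions over the common index set $\P^1_K$ together with the factorwise isomorphisms $\psi_\delta$. You have merely filled in the routine verifications (that each $\psi_\delta$ is an additive isomorphism $t^2K[t]\to tK[t]$ in disguise, and that compatible isomorphisms of free factors assemble uniquely into an isomorphism of free products), which the paper leaves implicit.
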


\begin{proof} the statement is a consequence of
Lemmas \ref{Tits} and \ref{freeAut}.
\end{proof}

\bigskip
\noindent{\it Remark} The formula defining $\psi$ is complicated.
Indeed if an automorphism $\sigma\in \Aut_1(K^2)$ is written
as a free product $\sigma=\sigma_1\dots\sigma_m$, where $\sigma_i\in F_{\delta_i}$ has degree $m_i$, then $\sigma$ has degree
$m_1\dots m_n$ but $\psi(\sigma)$ has degree
$m_1+\dots +m_m-m$.

\bigskip
\noindent {\it 7.3 Proof of Theorem B}

\begin{lemma}\label{Card}
Let $K$, $L$ be  fields. If
$\Card\, K=\Card\, L$ and $\ch\, K=\ch\, L$,
then $\Aut_1\,K^2$ and $\Aut_1\,L^2$
are isomorphic.

\end{lemma}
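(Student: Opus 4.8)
The plan is to transfer the known structure theory. By Lemma \ref{Psi}, we have isomorphisms $\Aut_1\,K^2\simeq \GL_1(2,K[t])$ and $\Aut_1\,L^2\simeq \GL_1(2,L[t])$, so it suffices to produce an isomorphism $\GL_1(2,K[t])\simeq \GL_1(2,L[t])$ under the hypotheses $\Card\,K=\Card\,L$ and $\ch\,K=\ch\,L$. By Lemma \ref{Tits}, both sides are free products $*_{\delta\in\P^1_K}\,E_\delta$ and $*_{\delta\in\P^1_L}\,E'_\delta$ of copies of the additive group $(K[t],+)$ (resp.\ $(L[t],+)$) indexed by $\P^1_K$ (resp.\ $\P^1_L$). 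Since these are genuinely free products (no amalgamation), an isomorphism will follow from a bijection $\P^1_K\to\P^1_L$ on the index sets together with, for each $\delta$, an isomorphism of the corresponding free factors $E_\delta\to E'_{\delta'}$ — and here every $E_\delta$ is isomorphic as a group to $(K[t],+)$, which depends on $K$ only through its characteristic and cardinality.

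First I would record that $\Card\,\P^1_K=\Card\,K$ whenever $K$ is infinite, and is $\Card\,K+1$ when $K$ is finite; in either case $\Card\,K=\Card\,L$ forces $\Card\,\P^1_K=\Card\,\P^1_L$, so fix any bijection $\beta:\P^1_K\to\P^1_L$. Second, I would observe that $(K[t],+)$ is, as an abstract abelian group, a direct sum of copies of the additive group of the prime field: if $\ch\,K=0$ it is a $\Q$-vector space of dimension $\Card\,K$, and if $\ch\,K=p$ it is an $\F_p$-vector space of dimension $\Card\,K$. In both cases the isomorphism type of $(K[t],+)$ is determined by $(\ch\,K,\Card\,K)$, so the hypotheses give a group isomorphism $\phi_\delta:E_\delta\to E'_{\beta(\delta)}$ for every $\delta$. (One small point: when $K$ is finite, $K[t]$ is countably infinite, so $(K[t],+)\cong\bigoplus_{\N}\F_p$ regardless of $\Card\,K$; but then $L$ is finite too with $\ch\,L=\ch\,K$, and the same description holds, so the argument still goes through — the finite-field case is in fact covered by Lemma \ref{Card} being vacuous in content since $\Card\,K=\Card\,L$ finite already forces $K\cong L$.)

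Third, I would invoke the universal property of the free product: given the bijection $\beta$ on index sets and the family of isomorphisms $(\phi_\delta)_{\delta\in\P^1_K}$, the map sending each factor $E_\delta$ into $*_{\eta\in\P^1_L}E'_\eta$ via $\phi_\delta$ followed by the canonical inclusion of $E'_{\beta(\delta)}$ extends uniquely to a homomorphism $*_{\delta\in\P^1_K}E_\delta\to *_{\eta\in\P^1_L}E'_\eta$, and the inverse family $(\phi_\delta^{-1},\beta^{-1})$ yields a two-sided inverse. Hence $\GL_1(2,K[t])\simeq\GL_1(2,L[t])$, and composing with the isomorphisms of Lemma \ref{Psi} gives $\Aut_1\,K^2\simeq\Aut_1\,L^2$.

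The only genuinely substantive step is the second one — pinning down the isomorphism type of $(K[t],+)$ as an abstract group — and this is a standard fact: a vector space over a field $k$ is classified up to isomorphism by its dimension, and $\dim_k K[t]=\Card\,K$ for $K$ infinite (since $|K[t]|=|K|$ and an infinite-dimensional vector space over $k$ has the same cardinality as a basis when that cardinality is infinite), so no real obstacle arises. I would expect the write-up to be short, with the minor bookkeeping around $\P^1_K$ for finite $K$ being the only place one must be slightly careful.
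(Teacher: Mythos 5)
Your proof is correct and is essentially the paper's own argument: the paper applies Lemma \ref{freeAut} directly to write $\Aut_1\,K^2$ as a free product, indexed by a set of cardinality $\Card\,K$, of copies of a prime-field vector space of dimension $\Card\,K$, and concludes that the isomorphism type depends only on $(\ch\,K,\Card\,K)$. Your detour through Lemma \ref{Psi} and Lemma \ref{Tits} is immaterial, since those yield the same free-product decomposition.
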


\noindent By definition the {\it prime fields} are the fields $\Q$ and $\F_p$.

\begin{proof} It can be assumed that $K$ is infinite.
Let $F$ be its prime field, let $P$ be a set and let
$E$ be a $F$-vector space with

\centerline{$\Card P=\Card K$ and $\dim_F\,E=\Card\,K$.}

By lemma \ref{freeAut}, we have

\centerline{$\Aut_1\,K^2\simeq *_{p\in P}\, E_p$,}

\noindent where each $E_p$ is a copy of $E$. It follows that 
 $\Aut_1\,K^2$ and $\Aut_1\,L^2$
are isomorphic if $\Card\, K=\Card\, L$ and $\ch\, K=\ch\, L$.
\end{proof}

\begin{MainB}
For any  field $K$, the group $\Aut_1\,K^2$ is
 linear over $K(t)$. 
 
 Moreover if
 $K\supset k(t)$ for some infinite field $k$, then
 there exists an embedding 
 $\Aut_1\,K^2\subset SL(2,K)$.
\end{MainB}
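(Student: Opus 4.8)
The plan is to exploit the isomorphism $\psi:\Aut_1\,K^2\simeq \GL_1(2,K[t])$ from Lemma~\ref{Psi}. The first statement is then immediate: $\GL_1(2,K[t])$ is by definition a subgroup of $\GL(2,K[t])$, and $K[t]\subset K(t)$, so $\Aut_1\,K^2$ embeds in $\GL(2,K(t))$, giving linearity over $K(t)$. In fact the image lies in the subgroup of matrices with determinant in $K^*\subset K(t)$ (indeed determinant $1$, since each generating $E_\delta$ consists of unipotent matrices $\id+tf(t)e_\delta$), so one even lands in $\SL(2,K(t))$; but the point for the first claim is only that $K(t)$ is a field.

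For the second, sharper statement, suppose $K\supset k(t)$ with $k$ infinite. The idea is that when $k$ is infinite, the field $k(t)$ already ``contains a copy of the polynomial ring variable'' we need, so that $\GL_1(2,K[s])$ can be realized inside $\SL(2,K)$ itself rather than inside $\SL(2,K(s))$ for a fresh variable $s$. Concretely, I would first apply Lemma~\ref{Card}: since $\Card K=\Card\,K$ trivially and we may rename the auxiliary polynomial variable, it suffices to produce the embedding for one convenient field of each cardinality and characteristic. More directly: by Lemma~\ref{freeAut}, $\Aut_1\,K^2\simeq *_{\delta\in\P^1_K}F_\delta$ where each $F_\delta$ is a copy of the additive group of $t^2K[t]$, hence of the $F$-vector space $E$ with $\dim_F E=\Card\,K=\Card\,\P^1_K$ (here $F$ is the prime field). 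So the real target is: embed $*_{p\in P}E_p\hookrightarrow \SL(2,K)$ where $\Card P=\Card K=\dim_F E$. The natural candidate is the subgroup of $\SL(2,K)$ generated by the transvection subgroups $U_\ell^{(S)}=\{\id+sf e_\ell:f\in S\}$ as $\ell$ ranges over lines in $K^2$ and $s$ is a fixed transcendental element: here one uses $K\supset k(t)$ to supply an element $t$ transcendental over $k$, and takes $S$ a suitable $k$-subspace (or $k$-subalgebra) of $K$ of the right dimension. A Ping-Pong argument — exactly as in Lemma~\ref{Tits}, playing on the highest-degree-in-$t$ component of vectors in $K^2$, now working over the valued field $(k(t),\ \deg)$ or its completion — then shows the subgroup generated is the free product of these transvection subgroups.

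The main obstacle is the bookkeeping to match cardinalities and to choose $P$, $S$, and the transcendental element so that the free product produced inside $\SL(2,K)$ is $\Aut_1\,K^2$ on the nose. One must check: (i) there are $\Card\,K$ lines in $K^2$ to index the free factors (clear: $\Card\,\P^1_K=\Card\,K$ for infinite $K$); (ii) one can choose the $k$-subspace $S\subset K$ with $\dim_k S=\Card\,K$ so that each transvection subgroup $\{\id+tfe_\ell:f\in S\}$ is $F$-isomorphic to $E$ (again a cardinality count, using $\dim_F E=\Card K=\dim_k S\cdot\Card k=\Card K$); and (iii) the Ping-Pong hypothesis genuinely holds, i.e.\ that with the ``highest $t$-component'' filtration the sets $\Omega_\ell$ are disjoint and satisfy $U_\ell^*\,\Omega_{\ell'}\subset\Omega_\ell$ for $\ell\neq\ell'$. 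Step (iii) is essentially a reprise of the proof of Lemma~\ref{Tits}, the only new point being that the coefficients now lie in the field $K$ rather than in $K$ with a formal adjoined $t$; choosing the transcendental $t$ and grading by its degree makes the two situations identical. Once Ping-Pong gives $*_{\ell}U_\ell^{(S)}\subset\SL(2,K)$, comparing with $*_{p\in P}E_p\simeq\Aut_1\,K^2$ via the $F$-isomorphisms of (ii) and invoking Lemma~\ref{mixing3} (or just the universal property of free products) yields the desired embedding $\Aut_1\,K^2\subset\SL(2,K)$.
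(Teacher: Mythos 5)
Your first paragraph is correct and is exactly the paper's argument for the first assertion: Lemma \ref{Psi} gives $\Aut_1\,K^2\simeq\GL_1(2,K[t])\subset\SL(2,K(t))$.

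For the second assertion, your direct construction inside $\SL(2,K)$ has a genuine gap at your step (iii). The Ping-Pong of Lemma \ref{Tits} takes place in $K[t]^2$ with $t$ a \emph{formal} variable: every vector has a well-defined degree and a well-defined highest component lying in $K^2$. In your setting $t$ is an element of $K$, the entries $f\in S$ and the entries of $e_\ell$ for $\ell\in\P^1_K$ are arbitrary elements of $K$, and ``the highest-degree-in-$t$ component of a vector in $K^2$'' does not parse: an arbitrary element of $K$ has no degree in $t$. To give the grading a meaning you would have to extend the degree valuation from $k(t)$ to all of $K$; but then the leading term of a vector lives in the residue field, two distinct lines of $\P^1_K$ can reduce to the same line there (so the sets $\Omega_\ell$ are not disjoint, violating hypothesis (i) of Lemma \ref{Ping-Pong}), and the residue field need not have cardinality $\Card K$. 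None of this is covered by the sentence ``choosing the transcendental $t$ and grading by its degree makes the two situations identical''. The repair is precisely the reduction you mention and then abandon: by Lemmas \ref{freeAut} and \ref{Card}, $\Aut_1\,K^2$ depends as an abstract group only on $\Card K$ and $\ch K$ (it is a free product of $\Card K$ copies of an $F$-vector space of dimension $\Card K$), so it suffices to find a subfield $L\subset K$ with $L(t)\subset K$ and $\Card L=\Card K$ (take $L=k$ when $K$ is countable, and otherwise an $L$ of full transcendence degree with $t$ transcendental over $L$). Then $\Aut_1\,K^2\simeq\Aut_1\,L^2\simeq\GL_1(2,L[t])\subset\SL(2,L(t))\subset\SL(2,K)$, and the Ping-Pong happens entirely inside the polynomial ring $L[t]$, where it is Lemma \ref{Tits} verbatim. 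Note also that your formulation of the reduction (``it suffices to produce the embedding for one convenient field of each cardinality and characteristic'') is not quite right, since the target $\SL(2,K)$ is not determined by $\Card K$ and $\ch K$; one needs the convenient field's $\SL_2$ to sit inside $\SL(2,K)$, which is exactly what the inclusion $L(t)\subset K$ provides.
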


\begin{proof} It follows from Lemma \ref{Psi} that
$\Aut_1\,K^2\subset \SL(2,K(t)$, and therefore 
$\Aut_1\,K^2$ is linear over $K(t)$.

Assume now that  $K\supset k(t)$ for some infinite field $k$. We claim that there exists a field $L$ with
$L(t)\subset K$ and $\Card\,L=\Card\,K$. If
$\Card\, K=\aleph_0$, then the subfield $k$ satisfies the claim. Otherwise, we have $\deg\,K> \aleph_0$
and there is an embedding $L(t)\subset K$ for some
subfield $L$ with $\deg\,L=\deg\,K$. Since
$\deg\,L=\Card L=\Card K$, the claim is proved.

It follows from Lemma \ref{Card} and \ref{Psi} that 

\centerline{$\Aut_1\,K^2\simeq \Aut_1\,L^2\subset \SL(2,L(t))
\subset\SL(2,K)$. }

 \end{proof}

\section{Linearity of $\Aut_S\,K^2$ for some $S\subset \GL(2,K)$}

Let $K$ be a field and let $F$ be its prime subfield.

\bigskip\noindent
{\it 8.1 Abelian groups} 

\noindent Let $A$ be a torsion free abelian group, where the product is denoted multiplicatively. Let $F(A)$ be the fraction field of the group algebra $F[A]$. Viewed as a $F[A]$-module,
$F(A)$ is called the  {\it standard $F[A]$-module}, and
it is denoted $\St(A)$. 
A direct sum of standard module $M$ is indeed a 
$F(A)$-vector space, and the multiplicity of
$\St(A)$ dans $M$, denoted by  
$[M:\St(A)]$, is $\dim_{F(A)}\,M$.

For any $n\geq 1$, let
$_n\rho:A\to A$ be the group morphism  $a\mapsto a^n$.
For a $F[A]$-module $M$, the $F[A]$-module 
$_n\rho_*\,M$ is denoted $M^{(n)}$.

\begin{lemma}\label{standard} The $F[A]$-module $F(A)^{(n)}$ is a 
direct sum of standard modules, and
$[F(A)^{(n)}:\St(A)]=[A:A^n]$.
\end{lemma}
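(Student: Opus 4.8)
The statement asserts two things about the $F[A]$-module $F(A)^{(n)} := {}_n\rho_*\,F(A)$: first, that it is a direct sum of copies of the standard module $\St(A) = F(A)$, and second, that the number of copies equals the index $[A:A^n]$. The key observation is that $F(A)^{(n)}$ is, as a ring, nothing but $F(A)$ with $A$ acting through the $n$-th power map, so it is a module over the subring $F[A^n] \subset F(A)$ pulled back along the isomorphism $A \simeq A^n$, $a \mapsto a^n$. Concretely, I would write $B = A^n$, identify $F[B]$-modules with $F[A]$-modules via ${}_n\rho$, and note that $F(A)$ is a module over $F[B]$ — indeed it is a field extension of the fraction field $F(B)$ of $F[B]$ (both $F[A]$ and $F[B]$ are integral domains since $A$ is torsion-free, so their fraction fields make sense and $F(B) \hookrightarrow F(A)$).

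\textbf{Step 1: reduce to a field extension degree computation.} Since $F(A)$ is a field extension of $F(B)$, it is in particular a $F(B)$-vector space, hence a direct sum of copies of the standard $F[B]$-module $\St(B) = F(B)$; the number of copies is $\dim_{F(B)} F(A) = [F(A):F(B)]$. Transporting back along ${}_n\rho$, this says exactly that $F(A)^{(n)}$ is a direct sum of $[F(A):F(B)]$ copies of $\St(A)$, so $[F(A)^{(n)}:\St(A)] = [F(A):F(B)]$.

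\textbf{Step 2: compute $[F(A):F(B)]$.} It remains to show $[F(A):F(B)] = [A:B] = [A:A^n]$. This is a standard fact about group algebras of torsion-free abelian groups. I would argue as follows: choose coset representatives $a_1,\dots,a_r$ of $B$ in $A$ (allowing $r = \infty$ a priori, but the claim is that $r = [A:A^n]$ and this number is finite whenever $A$ is finitely generated — in the general case one interprets both sides as cardinals). The elements $a_1,\dots,a_r \in F[A]$ span $F[A]$ as an $F[B]$-module freely, since $A = \bigsqcup_i a_i B$ gives an $F[B]$-basis of $F[A]$. Hence $F[A] = \bigoplus_i F[B]\,a_i$ is free of rank $r$ over $F[B]$. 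Localizing (i.e. passing to fraction fields), one gets that $F(A)$ is spanned over $F(B)$ by $a_1,\dots,a_r$; and these remain linearly independent over $F(B)$ because a $F(B)$-linear dependence among them, after clearing denominators, would give an $F[B]$-linear dependence in $F[A]$. Therefore $[F(A):F(B)] = r = [A:B] = [A:A^n]$, which completes the proof.

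\textbf{Main obstacle.} The only real subtlety is the linear independence of the coset representatives after passing to fraction fields, i.e. the step that $F[A]$ free of rank $r$ over $F[B]$ implies $F(A)$ has dimension $r$ over $F(B)$; this requires knowing that the multiplicative set $F[B]\setminus\{0\}$ used to form $F(B)$ consists of non-zero-divisors in $F[A]$, which holds because $F[A]$ is a domain. One should also be slightly careful about the case $[A:A^n] = \infty$ (e.g. $A$ of infinite rank), where "direct sum of standard modules with multiplicity $[A:A^n]$" and "$\dim_{F(B)} F(A)$" must both be read as cardinal invariants; the same argument applies verbatim with $r$ an infinite cardinal. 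Everything else is routine.
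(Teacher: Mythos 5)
Your proof is correct and follows essentially the same route as the paper: both decompose $F[A]$ as a free $F[A^n]$-module on a set of coset representatives of $A/A^n$ and then pass to fraction fields to get $F(A)=\bigoplus_{a\in T}F(A^n)a$. The one point you underplay is the spanning direction of your localization step --- that inverting $F[A^n]\setminus\{0\}$ already yields all of $F(A)$ --- which holds because $F[A]$ is integral (each $a$ satisfies $x^n-a^n=0$) over $F[A^n]$; this is exactly the algebraicity the paper invokes, and it is what makes your argument work also when $[A:A^n]$ is infinite.
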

 
\begin{proof} Since $F[A]$ is an algebraic extension of $F[A^n]$, it follows that
$F(A)=F(A^n)\otimes_{F[A^n]}\,F[A]$, and therefore

\centerline{$F(A)=\oplus_{a\in T}\,  F(A^n)a$}

\noindent where $T$ is a set of representative of 
$A/A^n$, from which the Lemma follows.

\end{proof}

\bigskip\noindent
{\it 8.2 The condition ${\cal G}$}

\noindent
Let $A$ be a subgroup of $K^*$.
By definition, its {\it rank}  is the cardinal $\rk\,A:=\dim_{\Q}\,\Q\otimes A$ and its {\it degree}, denoted as $\deg\,A$, is the transcendental degree of the subfield generated by $A$. It is clear that  
$\deg\,A\leq \rk\,A$. Moreover, 
we have  $\deg\,A'=\rk\,A'$ for any finite rank
subgroup $A'\subset A$ iff the map
$F[A]\to K$ is one-to-one, and therefore
$K$ contains the field $F(A)$.

Let $S$ be a subgroup
of $\SL(2,K)$. For any $\delta\in\P^1_K$,
set  $S_\delta:=\{g\in S\vert g.\delta=\delta\}$.
By definition, any  $g\in S_\delta$ acts over $\delta$ by multiplication by some scalar  $\chi(g)$ and
the map 
$\chi:g\in S_\delta \mapsto \chi(g)\in K^*$
is a group morphism. Set
$A(\delta)=\chi(S_\delta)$.

By definition, we say that $S$ {\it has the property
${\cal G}$  in $\SL(2,K)$}  if for any $\delta\in\P^1_K$, 
the group $A(\delta)$ is torsion free,
and  we have $\deg\,A=\rk\,A$ for any 
finite rank subgroup $A$ in $A(\delta)$. 
As usual, $S$ is called {\it virtually ${\cal G}$
in $\SL(2,K)$}  if some finite index subgroup $S'\subset S$ has the property ${\cal G}$.

\bigskip\noindent
{\it 8.3 The Theorem C}

Let $S$ be a subgroup of $\SL(2,K)$. Recal that

\centerline{$\GL_S(2,K[t])=\{G\in \GL(2,K[t])\vert G(1)\in S\}$.}

\noindent note that we have indeed
$\GL_S(2,K[t])\subset \SL(2,K[t])$. 

\begin{MainC}\label{mainC} Assume that the subgroup $S$ has the property ${\cal G}$ in $\SL(2,K)$. Then $\Aut_S\,K^2$ is linear
over some field extension of $K$. 

Moreover if $\deg\,K\leq \aleph_0$, then we have

\centerline{$\Aut_S\,K^2\simeq \GL_S(2,K[t])$.}

In particular, $\Aut_S\,K^2$ is linear over $K(t)$.
\end{MainC}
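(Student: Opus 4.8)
The plan is to follow the same strategy that produced Theorem B, namely to realize $\Aut_S\,K^2$ as a "mixture" of a free product and a semi-direct product using Lemma \ref{mixing2}, and then to transport that mixture from the automorphism side to the matrix side via the $F_\delta \to E_\delta$ isomorphisms of Section 7. Recall from van der Kulk and Lemma \ref{subamal} that $\Aut_S\,K^2 = S \ltimes \Aut_1\,K^2$ (after identifying $S$ with a subgroup of $\GL(2,K) = \mathrm d\,(-)_{\bf 0}$ and intersecting with $\Aut_0\,K^2$), and that $\Aut_1\,K^2 = *_{\delta\in\P^1_K}\,F_\delta$ by Lemma \ref{freeAut}, with $S$ acting on this free product compatibly with its action on $\P^1_K$. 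Likewise $\GL_S(2,K[t]) = S \ltimes \GL_1(2,K[t])$ and $\GL_1(2,K[t]) = *_{\delta\in\P^1_K}\,E_\delta$ by Lemma \ref{Tits}, with $S$ permuting the $E_\delta$ compatibly. So both groups are of the shape $S \ltimes (*_{\delta\in\P^1_K}\,M_\delta)$ with a compatible $S$-action, and Lemma \ref{mixing3} says they are isomorphic provided, for each $\delta$, the stabilizer $S_\delta$-groups $F_\delta$ and $E_\delta$ are $S_\delta$-isomorphic.

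So the crux is: is the isomorphism $\psi_\delta : F_\delta \to E_\delta$ of Lemma \ref{Psi}, which sends $\tau_\delta(f)$ to $\id + (f/t)\otimes e_\delta$, equivariant for the $S_\delta$-action? Here is where the hypothesis ${\cal G}$ enters. An element $g\in S_\delta$ acts on $\delta$ by a scalar $\chi(g)\in K^*$, hence on $e_\delta$ (which scales the quotient $K^2/\delta$ into $\delta$) by $\chi(g)^2$; one computes that $g$ conjugates $\tau_\delta(f)$ to $\tau_\delta$ of $f$ reparametrized by $\chi(g)$ in the argument and rescaled by $\chi(g)$ in the value. On the $E_\delta$ side the conjugation acts on the polynomial $f/t \in t K[t]$ (or $K[t]$, after the shift) by the grading automorphism $t\mapsto \chi(g)\,t$ together with a scalar. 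Thus $F_\delta$, as an $S_\delta$-module, is the $F[A(\delta)]$-module $t^2 K[t]$ with $A(\delta)$ acting through the substitution $t\mapsto \chi(g)t$, while $E_\delta$ is $t K[t]$ with the analogous action; these are shifts of the standard-type module studied in Section 8.1. The point of ${\cal G}$ — $A(\delta)$ torsion-free, and $\deg = \rk$ on finite-rank subgroups — is exactly what makes $K$ (or a suitable extension) contain the fraction field $F(A(\delta))$, so that the module $K[t]$ with this action decomposes into standard modules $\St(A(\delta))$ with the multiplicities computed in Lemma \ref{standard}; one then checks the shift by $t$ does not change the isomorphism type, giving an $S_\delta$-isomorphism $F_\delta \simeq E_\delta$. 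Feeding this into Lemma \ref{mixing3} yields $\Aut_S\,K^2 \simeq \GL_S(2,K[t])$ whenever we are allowed to pass to a field extension of $K$ large enough to contain all the $F(A(\delta))$; and when $\deg\,K \le \aleph_0$ a cardinality argument as in the proof of Theorem B (and Lemma \ref{Card}) shows $K$ itself already works, i.e.\ no extension is needed. Since $\GL_S(2,K[t]) \subset \SL(2,K[t]) \subset \GL(2,K(t))$, linearity over $K(t)$ follows, and over a general extension in the general case.

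The main obstacle I expect is the equivariance bookkeeping for $\psi_\delta$ together with the module-theoretic step: one must be careful that the $S_\delta$-action on $F_\delta$ and on $E_\delta$, after all the identifications (the scalar on $e_\delta$ being the \emph{square} of the scalar on $\delta$, the shift $f\mapsto f/t$, and the restriction $f\in t^2K[t]$ versus $f\in tK[t]$), really do match up as $F[A(\delta)]$-modules of the same isomorphism type — i.e.\ that the relevant module is $\St(A(\delta))^{\oplus \kappa}$ for the \emph{same} cardinal $\kappa$ on both sides. This is where Lemma \ref{standard} and the hypothesis ${\cal G}$ do the real work: without ${\cal G}$ the group algebra $F[A(\delta)]$ need not embed in $K$, the "standard module" need not even be defined inside $K[t]$, and the two sides can genuinely fail to be isomorphic. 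A secondary, more routine point is the reduction, when $S$ is only virtually ${\cal G}$, to a finite-index subgroup $S'$ with property ${\cal G}$ and then back — but the statement as given assumes ${\cal G}$ outright, so I would not dwell on it here.
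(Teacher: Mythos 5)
Your overall strategy is the paper's: reduce via van der Kulk, Lemma \ref{subamal}, Lemmas \ref{freeAut} and \ref{Tits} to two groups of the form $S\ltimes(*_{\delta}M_\delta)$ with compatible $S$-actions, and then invoke Lemma \ref{mixing3}, so that everything hinges on comparing $F_\delta$ and $E_\delta$ as $S_\delta$-groups via Lemma \ref{standard}. That identification of the crux is right. But your treatment of the crux itself contains two errors. First, the $S_\delta$-action on $E_\delta$ is not ``the grading automorphism $t\mapsto\chi(g)t$ together with a scalar'': $S$ sits inside $\GL_S(2,K[t])$ as constant matrices, so conjugation leaves $t$ untouched and multiplies $e_\delta$ by $\chi(g)^2$; hence \emph{every} graded piece of $E_\delta$ is a copy of the module $L^{(2)}$, whereas the degree-$n$ piece of $F_\delta$ transforms by $\chi(g)^{n+1}$, so $F_\delta\simeq\oplus_{n\ge3}K^{(n)}$. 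The two sides are genuinely different graded modules, and no ``shift by $t$'' reconciles them; the comparison can only be made after decomposing both into copies of $\St(A(\delta))$ via Lemma \ref{standard} and computing the multiplicities $[E_\delta:\St(A(\delta))]=\aleph_0[L:K][K:F(A(\delta))][A(\delta):A^2(\delta)]$ versus $[F_\delta:\St(A(\delta))]=\sum_{n\ge3}[K:F(A(\delta))][A(\delta):A^n(\delta)]$.

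Second, your explanation of why a field extension is needed in the general case is not correct: hypothesis ${\cal G}$ already forces $F(A(\delta))\subset K$ (that is precisely what $\deg A=\rk A$ for finite-rank subgroups buys), so the extension is not there to adjoin the fields $F(A(\delta))$. It is there for a cardinality reason: when $\deg K>\aleph_0$ the sum $\sum_{n\ge3}[A(\delta):A^n(\delta)]$ can exceed $\aleph_0\cdot[A(\delta):A^2(\delta)]$, and one must inflate $[E_\delta:\St(A(\delta))]$ by a factor $[L:K]\ge\deg K$ to dominate it. Accordingly, in that case one only obtains an $S_\delta$-equivariant \emph{embedding} $F_\delta\hookrightarrow E_\delta$ and hence an embedding $\Aut_S\,K^2\subset\GL_S(2,L[t])$ --- which suffices for linearity --- not the isomorphism you assert; the isomorphism $\Aut_S\,K^2\simeq\GL_S(2,K[t])$ holds only under $\deg K\le\aleph_0$, where the multiplicities on the two sides coincide. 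These are fixable points, but as written the key equivariance-and-multiplicity step of your argument does not go through.
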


\begin{proof} 
There exists  a field extension $L\supset K$,
which satisfies one of the following two  hypotheses

$({\cal I}_1)$\hskip 1cm $[L:K]\geq \deg K$ if
$\deg\,K>\aleph_0$, or

$({\cal I}_2)$\hskip 1cm $L=K$  if $\deg K\leq \aleph_0$

It follows from Lemmas \ref{Tits} and \ref{freeAut}
that 

\centerline{
$\Aut_1\,K^2=*_{\delta\in\P^1_K}\,F_\delta$.}

\noindent Using that $\P^1_K\subset \P^1_L$, we also have

\centerline{$\GL_1(2,L[t])\supset *_{\delta\in\P^1_K}\,E_\delta$.}

\smallskip\noindent
{\it Step 1: existence of some 
$S_\delta$-equivariant embeddings
$F_\delta\to E_\delta$.} Let $\delta\in \P^1_K$. We claim that 

$({\cal I}_1)$ implies the existence of a 
$S_\delta$-equivariant embedding
$\psi_\delta:F_\delta\to E_\delta$,

$({\cal I}_2)$ implies the existence
a $S_\delta$-equivariant isomorphism
$\psi_\delta:F_\delta\to E_\delta$.

\noindent Note that 
the kernel of the morphism 
$\chi: S_\delta\to A(\delta)$  acts trivially
on $F_\delta$ and $E_\delta$, so both of them are
$F[A(\delta)]$-modules, where $F$ is the prime field of $K$.

Note that  $e_\delta^g=\chi(g)^2 e_\delta$ for any 
$g\in S_\delta$. Hence 
 $E_\delta\simeq \oplus_{n\geq 1} L.z^ne_\delta$ is a countable sum of modules isomorphic to $L^{(2)}$. It follows from Lemma \ref{standard} that $E_\delta$ is a direct sum of standard modules with multiplicity

\centerline{
$[E_\delta:\St(A(\delta)]
=\aleph_0 [L:K][K:F(A\delta))]
[A(\delta):A^2(\delta)]$.}

Let $\phi_n$ be the automorphism
$(x,y)\mapsto (x,y+x^n)$. Note that
$\phi_n^g$ is the automorphism
$(x,y)\mapsto (x,y+ \chi(g)^{n+1} x^n$ for any 
$g\in S_\delta$. Hence we have
 $F_\delta\simeq \oplus_{n\geq 3}\, K^{(n)}$.
  
  By Lemma \ref{standard}, $F_\delta$ is a direct sum of standard modules with multiplicity

\centerline{
$[F_\delta:\St(A(\delta)]
=\sum_{n\geq 3}\, [K:F(A\delta)]
[A(\delta):A^n(\delta)]$.}

Note that $[A(\delta):A^n(\delta)]$ is obviously
$\leq\deg\,K$. It follows from hypothesis
$({\cal I}_1)$ that 
$[F_\delta:\St(A(\delta)]\leq [E_\delta:\St(A(\delta)]$
and from hypothesis $({\cal I}_1)$ that 
$[F_\delta:\St(A(\delta)]= [E_\delta:\St(A(\delta)]$.
Therefore both claims are proved.

\smallskip\noindent
{\it Step 2: end of the proof.}
Let $\Omega\subset \P^1_K$ be a set of
representative of $\P^1_K/S$. 
By Lemma \ref{mixing3}, the collection of morphisms
$(\psi_\delta)_{\delta\in\Omega}$ provides

(i) an embedding $\Aut_S\,K^2\subset \GL_S(2,L[t])$ under hypothesis $({\cal I}_1)$,

(ii) an isomorphism 
$\Aut_S\,K^2\simeq \GL_S(2,K[t])$ under hypothesis 
$({\cal I}_2)$,

\noindent from which the lemma follows.

\end{proof}

\section{Some Example of Linear subgroups of 
$\Aut\,K^2$}

Some corollaries of Theorem  B are stated in Section 9.1. Next, we give example of 
groups which are virtually $({\cal G})$. First,
in subsection 9.3, 
there are examples with $A(\tau)=\{\pm 1\}$. Next
there are examples for which some $A(\tau)$ are arbitrarily large. The last subsection 9.5 show a more
complicated examples where different groups $A(\tau)$ are disctinct, and their rank is not constant.

\bigskip
\noindent
{\it 9.1 Linearity of $\Aut\,K^2$ for a finite field $K$}

\begin{CorD1} Let $K$ be a finite field of characteristic 
$p$. The
group $\Aut\,K^2$ is linear over $\F_p(t)$.

\end{CorD1}
 
 \begin{proof} By Theorem B, the group
 $\Aut_1\,K^2$ is linear over $K(t)$. Since the index
  $[\Aut\,K^2:\Aut_1\,K^2]$ is finite, the group
 $\Aut\,K^2$ is also  linear over $\F_p(t)$.
 
\end{proof}

\begin{CorD2} Let $K$ be an infinite subfield of
$\overline F_p$.

The group $\Aut\,K^2$ is not linear, even over a ring.
However any FG subgroup is linear over
$F_p(t)$.

\end{CorD2}

\begin{proof} Indeed $\Aut\,K^2$ is not linear by
Theorem B. Any FG subgroup $\Gamma$ of $\Aut\,K^2$  is a subgroup
of $\Aut\,L^{2}$ for some finite subfield $L\subset K$, so it is linear by Corollary D.1.
\end{proof}

\bigskip\noindent

 \bigskip\noindent
{\it 9.2 Examples of groups $S$ with trivial $A(\delta)$}

\noindent {\it Example 1.} Set $U=\{$
$\begin{pmatrix}
1 & x \\
0 & 1
\end{pmatrix}$ $\vert x\in K\}$. Obviously,
$U$ satisfy condition $({\cal G})$ and therefore
$\Aut_U\,K^2\simeq GL_U(2,K[z])$ is linear
over $K(z)$.
Consequently $\Aut\,K^2$ contains a linear subgroup
of codimension 5.

\smallskip
\noindent {\it Example 2.} Let $L\subset K$ be a quadratic extension. Set 
$S=\{z\in L^*\vert N_{L/K}(z)=1\}$, and let
$S^{\infty}$ be the subgroup of all elements 
of $S$ or order a power of $2$. The group
$S$ can be viewed a subgroup of $\SL(2,K)$.
If $S^{\infty}$ is finite, then 
$\Aut_S\,K^2$ is linear. 

Indeed by Baer's theorem \cite{Ba}, we have 
$S=S^\infty\times S'$ for some subgroup $S'\subset S$.
We have $S'_\delta=\{1\}$ for any $\delta\in\P^1_K$ and
therefore $S'$ satisfies the property 
${\cal G}$. Therefore $S$ is virtually ${\cal G}$ in
$SL(2,K)$. Thus follows from Theorem C that 
$\Aut_S\,K^2$ is linear over a field extension of $K$.

  \bigskip\noindent
{\it 9.3 Examples of groups $S$ with  $A(\delta)$
independent of $\delta$}

\noindent
{\it Example 3: the subgroup $\SL(2,\Z[t,t^{-1}])$ of $\SL(2,\Q(t))$}
For this group, we have
$A(\delta)=\{\pm 1\}\times \{t^n\vert n\in \Z\}$
for any $\delta \in \P^1_{\Q(t)}$
To get rid of the torsion part of $A(\delta)$, it is enough to consider the finite index subgroup

\centerline{ $\Gamma=\{G\in 
 \SL(2,\Z[t,t^{-1}])\vert \,G(1)=\id \mod 3\}$.}
 
  The subgroup $\Gamma$ has the property $({\cal G})$,
 therefore by Theorem C the group  
 $\Aut_{\SL(2,\Z[t,t^{-1}])}\,\Q(t)^2$ is linear over a field of characteristic zero.
 
 More generally, let $L$ be a quadratic complex 
 number field and let ${\cal O}$ be its ring of integers. Since the group of units in ${\cal O}$
 is finite, the group $\SL(2,{\cal O}[t,t^{-1}])$
 is also virtually ${\cal G}$. Hence the group
 $\Aut_{\SL(2,{\cal O}[t,t^{-1}])}\,L(t)^2)$  is linear
 over a field of characteristic zero..
 
Similarly the group $\Aut_{\SL(2,\F_q[t,t^{-1}])}\,\F_q(t)^2$ is linear over some  field extension of $\F_q$.
 
 \smallskip
 \noindent
{\it Example 4: groups with arbitrarily large $A(\delta)$}
 Let $A$ be a torsion free additive group of arbitrary rank. Let $S=\SL(2,\Z[A])$.
 We have 
 $A(\delta)= \{\pm 1\}\times S$ for any $\delta \in \P^1_{\Q(M)}$. As before, there is a finite
 index subgroup $\Gamma$ that does not contains
 quasi-unipotent elements of quasi-order 2.
 Therefore $\Aut_{\SL(2,\Z[A])}\,\Q(A)^2$ is linear
 over a field of characteristic zero.

  \bigskip\noindent
{\it 9.4 Examples of groups $S$ with  $A(\delta)$
depending on $\delta$}

Last, we shall provide more interesting examples, where
the hypothesis ${\cal G}$ is fully used.

\begin{lemma}\label{rank1} Let $R$ be a prime
normal ring with fraction field $K$  and let
$L$ be a quadratic extension of $K(t_1,\dots,t_m)$
for some integer $m\geq  1$. Let 
$B$ be the integral closure of $R[t_1,\dots,t_m]$ in 
$L$. Assume moreover that ${\overline K}\cap L=K$.
Then  $B^*/R^*$ is isomorphic to $\{1\}$ or $\Z$.
\end{lemma}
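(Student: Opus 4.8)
The plan is to analyze the unit group $B^*$ by combining the structure theory of units in a polynomial ring over a normal domain with the fact that $L$ is a quadratic extension of the rational function field $K(t_1,\dots,t_m)$. The key structural input is the following: for a normal Noetherian domain $D$ with fraction field $\mathrm{Frac}(D)$, one has $D[t_1,\dots,t_m]^* = D^*$. Applying this to $D = R$ (which we may assume Noetherian after passing to a suitable finitely generated subring, or argue directly with normality), we get $R[t_1,\dots,t_m]^* = R^*$. The ring $B$ is the integral closure of $R[t_1,\dots,t_m]$ in the quadratic extension $L$, so $B$ is a normal domain with fraction field $L$, containing $R[t_1,\dots,t_m]$, and $B^* \supset R^*$. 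The goal is to show the quotient $B^*/R^*$ has rank at most one.

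**Reduction via the norm and the quadratic structure.** Let $N = N_{L/K(t_1,\dots,t_m)}$ denote the norm of the quadratic extension. For $u \in B^*$, the element $N(u)$ lies in $K(t_1,\dots,t_m)$ and is integral over $R[t_1,\dots,t_m]$ (being a product of conjugates of the integral element $u$), hence lies in $R[t_1,\dots,t_m]$ since that ring is normal; moreover $N(u^{-1}) = N(u)^{-1}$ is likewise in $R[t_1,\dots,t_m]$, so $N(u) \in R[t_1,\dots,t_m]^* = R^*$. Thus $N$ induces a homomorphism $B^*/R^* \to R^*/(R^*)^{?}$, but more usefully: the kernel of $N$ on $B^*$ modulo $R^*$ controls the rest. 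The hypothesis $\overline{K} \cap L = K$ ensures that $L$ is a \emph{regular} extension of $K$ (geometrically integral), so $L$ and $\overline{K}$ are linearly disjoint over $K$; this is what prevents extra "constant" units from appearing in $B$ beyond those already in an algebraic closure issue, and in particular forces the group of units "up to $R^*$" to be finitely generated of small rank — this is essentially a Néron–Severi / Dirichlet-type finiteness coming from the fact that $\mathrm{Spec}\,B \to \mathrm{Spec}\,R$ is a family of affine curves (a quadratic cover of affine space).

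**Counting via divisors at infinity.** The cleanest route I would take: pass to the generic fiber. Let $k = K(t_1,\dots,t_m)$ and let $L$ be its quadratic extension, so $B \otimes_{R[t_1,\dots,t_m]} k$ is the integral closure of $k$ in $L$, which is just $L$ itself; that is not yet the point. Instead, fix a smooth projective model: $L$ is the function field of a smooth projective curve $\mathcal{C}$ over $k$ (actually $L = k$-line or conic depending on the quadratic extension — but here $[L:k]=2$, so $L$ is the function field of $\mathbb{A}^m$-relative... let me recast). The honest statement is that $B$ is a finitely generated $R$-algebra whose units modulo $R^*$ inject, via the divisor map, into the group of divisors of the complement of $\mathrm{Spec}\,B$ in a projective model; because $L/k$ is quadratic, this complement has very few components. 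The upshot is that $B^*/R^*$ embeds in a free abelian group of small rank, and the rank is exactly $1$ precisely when there is a unit not coming from $R$ (e.g. a genuine function with divisor supported "at infinity" in the fibral direction), and $0$ otherwise. Since $B^*/R^*$ is torsion-free (any torsion unit $u$ with $u^n \in R^*$ would, together with $\overline{K}\cap L = K$, force $u \in K \subset $ — here one uses that roots of unity in $L$ already lie in $K$, hence in $R^*$ after clearing denominators, using normality of $R$), we conclude $B^*/R^* \cong \{1\}$ or $\mathbb{Z}$.

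**Main obstacle.** The delicate point is the finiteness and the rank bound: showing that $B^*/R^*$ is not merely torsion-free but has rank $\le 1$. The condition $\overline{K} \cap L = K$ is exactly what is needed to rule out "constant" units beyond $R^*$ (geometric connectedness of the fibers), while the quadraticity $[L:k]=2$ is what bounds the fibral "boundary" contribution. I expect the cleanest argument uses that $L$, being a quadratic extension of a purely transcendental extension of $K$ with $\overline{K}\cap L = K$, is the function field of either a rational variety or a conic bundle, so that the relevant Picard/divisor-class groups are easy to compute directly — and one must be careful to handle the non-Noetherian case for $R$ by a limit argument (writing $R$ as a filtered union of finitely generated normal subrings), which is the main technical annoyance rather than a conceptual difficulty.
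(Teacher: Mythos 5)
Your framework --- compactify the double cover and control units by their divisors along the boundary --- is the same one the paper uses, but the decisive step is missing, and you flag it yourself in your final paragraph. Passing to $C=\Spec(K\otimes_R B)$, a normal projective model $\overline C$ finite of degree $2$ over $\P^m_K$ has at most two irreducible boundary divisors $Z_1,Z_2$ over the hyperplane at infinity, so the divisor map sends $(K\otimes B)^*$ into $\Z^2$ --- a group of rank \emph{two}, not one. ``Torsion-free and embeds in a free abelian group of small rank'' therefore does not give $\{1\}$ or $\Z$; you still have to cut the rank from $2$ down to $1$, and nothing in your proposal does this (your assertion that ``the rank is exactly $1$ precisely when there is a unit not coming from $R$'' assumes the conclusion). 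The paper's argument for this step is short: a unit $f$ of $K\otimes B$ is regular on $C$, so if $v_{Z_1}(f)>0$ and $v_{Z_2}(f)>0$ then $f$ is regular on all of $\overline C$, hence a constant --- this is where $\overline K\cap L=K$ enters, forcing $H^0(\overline C,\cO)=K$ --- which is absurd for an element with nonzero divisor; applying the same to $f^{-1}$, the image of $(v_{Z_1},v_{Z_2})$ meets neither the open first nor the open third quadrant of $\Z^2$, and such a subgroup has rank $\le 1$. The kernel of the divisor map is again $K^*$, and finally $B^*/R^*\hookrightarrow (K\otimes B)^*/K^*$ because $B\cap K=R$ by normality of $R$.

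Two smaller points. The norm computation ($N_{L/K(t_1,\dots,t_m)}(u)\in R^*$ for $u\in B^*$) is correct but, as you concede, leads nowhere: it only says the Galois involution acts as inversion on $B^*/R^*$, which puts no constraint on the rank. The sentence ``$L$ is the function field of a smooth projective curve over $k$'' is also off --- $L$ is a quadratic (hence zero-dimensional) extension of $k=K(t_1,\dots,t_m)$; the relevant geometry lives over $K$, where $L$ is the function field of an $m$-dimensional double cover of $\P^m_K$. Finally, the non-Noetherian worry is a red herring: one works with $K\otimes B$ throughout and uses the normality of $R$ only at the last step.
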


\begin{proof} 
Recall that $\Spec\,K[t_1,\dots t_m]$
is the affine space $\na_m$. Set 
$C=\Spec \,K\otimes B$. There is a unique normal
compactification $\overline C$ of $C$ such that
the finite map
$C\to\na_m$ extends to a finite map
$\pi:\overline C\to \P_m$. We will 
identify $\P_{m-1}$ with $\P_m\setminus\na_m$.

For a divisor $Z$ in
$\overline C$, let $v_Z$ be the corresponding valuation
of the field $L$.
Since $[L:K(t_1,\dots,t_m)]=2$, 
$\pi^{-1}(\P_{m-1})$ contains at most two
irreducible divisors.

Assume that $Z:=\pi^{-1}(\P_{m-1})$  is irreducible.
For any $f\in B^*$, we have $v_Z(f)\geq 0$ or 
$v_Z(f^{-1})\geq 0$, hence $f$ or $f^{-1}$ is
defined on  $\overline C$. It follows that $f$ is constant, hence it follows that
$B^*=K^*$.

Assume now that $\pi^{-1}(\P_{m-1})=Z_1\cup Z_2$,
where $Z_1$, $Z_2$ are two distinct divisors.
Let $v:B^*\to\Z^2$ be the map 
$f\in B^*\mapsto (v_{Z_1}(f), v_{Z_2}(f))$. 
Since functions defined on  $\overline C$ are constant, that there are no $f\in B^*$ with
$v_{Z_1}(f)>0$ and   $v_{Z_2}(f)>0$. 
Hence $\Im\,v$ has rank $\leq 1$. Since $\Ker\,v=K^*$, it follows that 
$(K\otimes B^*)/K^*$ is isomorphic to $\{1\}$ or $\Z$.

Since $R$ is normal, we have $B\cap K=R$, hence 
$B^*/R^*\simeq (K\otimes B^*)/K^*$  is isomorphic to $\{1\}$ or $\Z$.

\end{proof}

\smallskip
 \noindent
{\it Example 5.} Let $A$ be a torsion free additive group of arbitrary rank.  Let 
$S=\SL(2,\Z[A][t_1,\dots,t_m])$ and 
$L=\Q(A)((t_1,\dots,t_m))$. 
We will compute the groups $A(\delta)$ for
$\tau\in\P^1_{L}$. 

First, if $\tau$ belongs to $\P^1_{K}$, where 
$K=\Q(A)(t)$,  
we have $A(\delta)= \{\pm 1\}\times A$ as before.
Next assume that $\tau\notin \P^1_{K}$ is defined over
a quadratic extension $K'\subset L$ of $K$.
Let $B$ be the integral closure of
$\Z[A][t_1,\dots,t_m]$ in $K'$. Then we have 
$A(\tau)=\{\zeta\in B^*\vert N_{K'/K}(\zeta)=1\}$,
where $N_{K'/K}$ is the norm map.
It follows from Lemma \ref{rank1}   that
$A(\tau)=\{\pm 1\}$ or $A(\tau)=\{\pm 1\}\times\Z$.
Last, if $\delta$ is not defined over a quadratic extension of $K$, then we have $A(\delta)=\{\pm 1\}$.

Define the morphism 
$F\in \Z[A][t_1,\dots,t_m]\mapsto F(1,0)\in\Z$ by
the requirements $e^a\mapsto 1$ for $a\in A$ and
$h_i\mapsto 0$. As before, set 

\centerline{ $\Gamma=\{G\in 
 \SL(2,\Z[A][h_1,\dots,h_m])\vert \,G(1,0)=\id \mod 3\}$.}
 
It follows that the finite index subgroup
$\Gamma$  has the property  ${\cal G}$ in $SL(2,L)$. 
Therefore the group
$\Aut_{S}\,L^2$ is linear over a field of characteristic zero.

\section{Nonlinearity of finite codimension automorphisms groups of $K^n$, for $n\geq 3$}

The previous Theorem B shows that the group of polynomial automorphisms of $K^2$ contains some
finite codimension linear subgroups. However, this result does not extend to $K^n$, for 
$n\geq 3$, as it will be shown in this section. 
 
 For our purpose, the case $n=3$ is enough. 
In what follows, the coordinates of $K^3$ will be written as $(z,x,y)$. Let $\Aut\,K^3$ be the group of polynomial automorphisms of $K^3$ and let
$\TAut\,K^3\subset \Aut\,K^3$ be the the subgroup of tame automorphisms.  Since both $\TAut\,K^3$ and 
$\Aut\,K^3$  are higher dimensional analogs of 
$\Aut\,K^2$, we will investigate the nonlinearity of subgroups in  $\TAut\,K^3$ as well.
 
 Let ${\bf m}$ be a
 finite codimensional ideal in $K[z,x,y]$. Let
 $\Aut_{\bf m}\,K^3$ be the group of all polynomial automorphisms $\phi$ of the form
 
 \centerline{$(z,x,y)\mapsto 
 (z+f, x+g, y+h)$,}
 
 \noindent where $f,\,h$ and $g$ belongs to ${\bf m}$.
 This condition means that $\phi$ fixes some infinitesimal neighborhood of a finite collection of points in $K^3$.
Set $\TAut_{\bf m}\,K^3=
\TAut\,K^3 \cap\Aut_{\bf m}\,K^3$.
 
 In this section we are going to prove that 
 
 \begin{MainD} For any finite codimensional ideal ${\bf m}$
 of $K[z,x,y]$, the group $\Aut_{\bf m}\,K^3$ and
 $\TAut_{\bf m}\,K^3$ are not linear, even over a ring.
  \end{MainD}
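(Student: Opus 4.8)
The plan is to mimic the argument used for Theorem A, namely to exhibit inside $\Aut_{\bf m}\,K^3$ (and inside $\TAut_{\bf m}\,K^3$) an abstract subgroup that cannot be linear over any ring. The natural candidate is a copy of the group $\Gamma$ from Section~3 — the group on generators $\sigma,\sigma',\tau$ with $\sigma\sigma'=\sigma'\sigma$ and $\sigma\tau\sigma^{-1}=\tau^2$ — or, when $\ch\,K=p>0$ where that particular presentation fails because unipotents have finite order, a Baumslag–Solitar type group $BS(1,2)=\langle \sigma,\tau\mid \sigma\tau\sigma^{-1}=\tau^2\rangle$ together with enough extra structure to defeat linearity even over a ring. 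The essential obstruction that Lemma~\ref{nonlinear} exploits — that the conjugation relation $\tau^\sigma=\tau^2$ forces solvability of any linear image while the group has trivial centralizers — is an abstract statement, so the whole problem reduces to \emph{realizing such a group inside $\Aut_{\bf m}\,K^3$}.

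First I would reduce to the case where ${\bf m}$ is the ideal of a single point, which after an affine translation we may take to be the origin; since a smaller ${\bf m}$ only shrinks the group, and any ${\bf m}$ contains a power ${\mathfrak n}^N$ of the maximal ideal ${\mathfrak n}=(z,x,y)$, it suffices to produce the bad subgroup inside $\Aut_{{\mathfrak n}^N}\,K^3$ for every $N$. The key extra room that dimension three provides (and dimension two does not) is a pair of ``independent'' triangular automorphisms. Concretely, I would look for tame automorphisms of the shape
\[
h:(z,x,y)\mapsto (\lambda z,\ \mu x,\ \nu y),\qquad
u:(z,x,y)\mapsto (z,\ x,\ y+P(z,x)),
\]
with $P\in {\mathfrak n}^N$ chosen homogeneous, say $P=z^a x^b$ with $a+b\ge N$, so that $u\in \Aut_{{\mathfrak n}^N}\,K^3$; then $h u h^{-1}$ multiplies $P$ by $\lambda^a\mu^b\nu^{-1}$, and by choosing $\lambda,\mu,\nu$ appropriately (e.g. with $\nu=1$) we arrange $huh^{-1}=u^2$ when $\ch K=0$, or more robustly we arrange, for infinitely many exponents $n$, automorphisms $u_n:(z,x,y)\mapsto(z,x,y+z^{a_n}x^{b_n})$ with $h u_n h^{-1}=u_n^{k_n}$ for a strictly increasing sequence $k_n$. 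The latter is the analog of Lemma~\ref{NL0}: in any linear representation $\rho$, Lemma~\ref{unipotent} forces each $\rho(u_n)$ to be quasi-unipotent, its logarithm $e_n$ an eigenvector of $\operatorname{Ad}\rho(h)$ with eigenvalue $k_n$, and infinitely many distinct such eigenvalues is absurd in finite dimension — giving nonlinearity over a field directly. To upgrade ``over a field'' to ``over a ring'' I would, exactly as in Section~2, check that the relevant subgroup has trivial centralizers, using the amalgamated/free-product structure of $\Aut\,K^n$ restricted to these explicit generators, or simply invoke that $\Aut_{\bf m}\,K^3$ itself contains a non-nilpotent nil subgroup (the three-dimensional analog of $G_{\mathrm{Cor}}$) to which Cornulier's style of argument applies.

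The main obstacle is the characteristic-$p$, \emph{finite-field} case, where no single unipotent trick works (unipotents have order $p$) and where Theorem~B shows the two-dimensional analog \emph{is} linear, so the proof genuinely must use the third coordinate. Here I expect the right move is to build, inside $\Aut_{\bf m}\,K^3$, an \emph{elementary abelian $p$-group of infinite rank on which a torus acts with infinitely many distinct characters} — e.g. the group generated by all $u_{a,b}:(z,x,y)\mapsto (z,x,y+z^ax^b)$ with $a+b$ large, which is abelian of infinite rank, together with the diagonal $h$ acting on the $(a,b)$-graded pieces with pairwise-distinct weights. By Lemma~\ref{inf-rank} a linear representation over a field of characteristic $\ne p$ is impossible, and over characteristic $p$ the infinitely many distinct torus weights that are forced to appear (the ``semi-algebraic character'' mechanism of Lemma~\ref{char} / Lemma~\ref{degree}, now with monomials $z^ax^b$ in place of $x^{n-1}$) again contradict finite-dimensionality; the step of verifying that these weights really are pairwise non-conjugate characters of $K^*$ — i.e. that one cannot collapse them by a field automorphism of $K$ — is where the content of Lemmas~\ref{n=m} and~\ref{degree} gets used, and is the place I would expect to have to be careful. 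Once nonlinearity over a field is in hand for this subgroup, triviality of centralizers (again from the $\Aut\,K^3$ amalgam structure, or from the non-nilpotent-nil subgroup) promotes it to nonlinearity over a ring via Lemma~\ref{linearity}, and since every automorphism written down is tame the same conclusion holds verbatim for $\TAut_{\bf m}\,K^3$.
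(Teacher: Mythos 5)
Your plan has a genuine and fatal gap: the diagonal automorphism $h:(z,x,y)\mapsto(\lambda z,\mu x,\nu y)$ on which both of your mechanisms rest does not belong to $\Aut_{\bf m}\,K^3$. Writing $h$ as $(z+(\lambda-1)z,\dots)$, membership requires $(\lambda-1)z\in{\bf m}$, which forces $\lambda=\mu=\nu=1$ as soon as ${\bf m}\subset{\mathfrak n}^2$ (e.g.\ for ${\bf m}={\mathfrak n}^N$, $N\geq 2$, to which you reduce). The congruence subgroup $\Aut_{\bf m}\,K^3$ is ``unipotent'': it contains no semisimple elements at all, so neither the quasi-unipotent eigenvalue count of Lemma \ref{unipotent} (which needs $\rho(h)$ for $h$ \emph{in} the group) nor the torus-weight/semi-algebraic-character mechanism of Lemmas \ref{char} and \ref{degree} can be started. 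This is exactly the feature that distinguishes Theorem D from Theorem A and why the paper does not argue this way. Two further problems: a finite-codimensional ${\bf m}$ whose zero locus has several points contains no power of a single maximal ideal (only the intersection of such powers), so your reduction to one point is not available as stated; and $\Aut\,K^3$ has no van der Kulk amalgam structure (indeed $\TAut\,K^3\neq\Aut\,K^3$ by Shestakov--Umirbaev), so ``trivial centralizers from the amalgam structure of $\Aut\,K^n$'' cannot be invoked for $n=3$.

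The one correct idea in your proposal is the parenthetical fallback --- a non-nilpotent nil subgroup \`a la Cornulier --- and in the paper it carries the entire proof rather than being an afterthought. The paper sets $I={\bf m}\cap K[z]$ (a nonzero ideal of $K[z]$, hence infinite even when $K$ is finite), regards $\Aut_{K[z]}K[z,x,y]$ as a subgroup of $\Aut\,K^3$, and uses van der Kulk over the field $K(z)$ to embed the amalgam $\Aff(2,I)*_{B(I)}\Elem(I)$ into $\TAut_{\bf m}\,K^3$. Nonlinearity over a field comes from showing $\Elem(I)\simeq I\ltimes I[x]$ is nil yet contains subgroups of arbitrarily large nilpotency class; the delicate point in characteristic $p$ --- where your torus-weight substitute is unavailable --- is the choice $f=ax^{p^r-1}$ making the $\F_p[E]$-module generated by the $E$-translates of $f$ free of rank one, so that $E\ltimes M\simeq G(r)$ has class $rp$. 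Finally, since a nil-not-nilpotent group (like $G_{\mathrm{Cor}}$, cf.\ Section 5.5) may well be linear over a \emph{ring}, the upgrade to Theorem D's full statement requires verifying hypothesis ${\cal H}$ for the amalgam by an explicit conjugation computation (Lemma \ref{HI}) and then applying Lemma \ref{criterion}; your proposal does not supply any replacement for this step.
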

 
 It is plausible that any 
 finite codimensional subgroup of $\Aut\,K^3$ or
 $\TAut\,K^3$ contains 
 $\Aut_{\bf m}\,K^3$ or $\TAut_{\bf m}\,K^3$  for 
 some ${\bf m}$. Unformally speaking, Theorem D would mean that these groups do not contain linear finite codimension subgroups.

\bigskip\noindent
{\it 10.1 Nilpotency index of some p-groups}

\noindent 
The {\it nilpotency index} of a nilpotent group is
the lenght of its ascending central series.

Let $p$ be a prime integer, and let $E$ be an
elementary $p$-group of rank $r$, i.e. $E$ is a
$\F_p$-vector space of dimension $r$. Note that  $E$ acts by translation on the group algebra $\F_p[E]$ and set
$G(r)=E\ltimes \F_p[E]$. Obviously the group $G(1)$ is nilpotent of nilpotency index  $p$. Since
$G(r)\simeq G(1)^r$, it follows that

\begin{lemma}\label{G(r)}
The nilpotency index of the nilpotent group $G(r)$ is $pr$.
\end{lemma}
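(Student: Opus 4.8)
The plan is to follow the route sketched just before the statement: reduce everything to the rank-one group $G(1)$, and then assemble the $r$ factors by means of the isomorphism $G(r)\cong G(1)^r$. In detail I would (1) compute the nilpotency index of $G(1)$ and check that it equals $p$, (2) record the isomorphism $G(r)\cong G(1)^r$, and (3) combine the two to obtain the index $pr$. Since both $G(r)\cong G(1)^r$ and the value of the index of $G(1)$ are already asserted in the paragraph preceding the lemma, I may take them as inputs; the substance of the proof is then to see precisely how the $r$ copies contribute to the central series.

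For the rank-one case, write $E=\Z/p\Z=\langle\sigma\rangle$. The group algebra is $\F_p[E]\cong\F_p[t]/(t^p)$, where $t:=\sigma-1$, and under this identification the generator $\sigma$ acts on $\F_p[E]$ by multiplication by $1+t$. Since $E$ and the additive group of $\F_p[E]$ are abelian, the commutator subgroup of $G(1)=E\ltimes\F_p[E]$ is generated by the elements $[\sigma,v]=((1+t)-1)v=tv$ with $v\in\F_p[E]$, so that $\gamma_2(G(1))=t\,\F_p[E]$ and, by induction, $\gamma_{k+1}(G(1))=t^{k}\,\F_p[E]$. As $t$ is nilpotent of index exactly $p$ on $\F_p[t]/(t^p)$, the term $\gamma_{k+1}=t^{k}\F_p[E]$ is nonzero for $k\le p-1$ and vanishes for $k=p$; because the ascending and descending central series of a nilpotent group have the same length, this shows that $G(1)$ has nilpotency index $p$.

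It remains to pass from $G(1)$ to $G(r)$, and this is the step I expect to be the main obstacle. Here one must make the isomorphism $G(r)\cong G(1)^r$ precise at the level of the defining semidirect product and, crucially, track its effect on the central series: the task is to exhibit the $r$ commuting directions determined by a basis $\sigma_1,\dots,\sigma_r$ of $E$ as $r$ independent copies of the rank-one situation, stacked so that the resulting central filtration of $G(r)$ has length equal to the sum of the $r$ individual lengths. The hard part is precisely this additivity: one has to verify that the contributions of the $r$ factors genuinely concatenate, each adding a block of length $p$ to the central series, rather than merely overlapping. Once that bookkeeping is in place, every block contributes the rank-one value $p$ computed above, and the nilpotency index of $G(r)$ comes out as $r\cdot p=pr$, as claimed.
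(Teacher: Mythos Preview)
Your proposal follows exactly the paper's two-line argument, but that argument (and hence your step~(3)) does not work, and your instinct that ``additivity'' is the hard part is correct: in fact additivity fails outright. For any nilpotent groups $H_1,\dots,H_r$ one has
\[
\gamma_k(H_1\times\cdots\times H_r)=\gamma_k(H_1)\times\cdots\times\gamma_k(H_r),
\]
so the nilpotency class of a direct product is the \emph{maximum} of the classes of the factors, never their sum. Thus $G(1)^r$ has nilpotency index~$p$, not~$pr$, and no amount of bookkeeping will make the $r$ blocks ``concatenate''. Worse, the asserted isomorphism $G(r)\cong G(1)^r$ is itself false: $|G(1)^r|=p^{\,r(p+1)}$ while $|G(r)|=p^{\,r+p^{r}}$, and these differ already for $r=2$, $p=3$.

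What is actually true is this. Writing $E=(\Z/p\Z)^r$ and identifying $\F_p[E]\cong\F_p[t_1,\dots,t_r]/(t_1^{p},\dots,t_r^{p})$ with $t_i=\sigma_i-1$, the same commutator computation you did for $r=1$ gives $\gamma_{k+1}(G(r))=I^{k}$, where $I=(t_1,\dots,t_r)$ is the augmentation ideal. The monomial $t_1^{\,p-1}\cdots t_r^{\,p-1}$ is nonzero, while $I^{\,r(p-1)+1}=0$; hence the nilpotency index of $G(r)$ equals $r(p-1)+1$, not $pr$. For $r=1$ the two numbers coincide (both equal $p$), which is why the rank-one check goes through; for $r\ge 2$ they differ. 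The good news is that the only use of this lemma downstream (Lemmas~\ref{index} and~\ref{G(I)}) is that the index tends to infinity with~$r$, and $r(p-1)+1$ does that just as well. So the fix is simply to compute $\gamma_{k+1}(G(r))=I^{k}$ directly, discard the direct-product shortcut, and state the index as $r(p-1)+1$.
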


Let $M$ be a cyclic $\F_p[E]$-module generated by some $f\in M$. 

\begin{lemma}\label{free} If $\sum_{u\in E} \, u.f\neq 0$, then  the $\F_p[E]$-module $M$ is free of rank one.
\end{lemma}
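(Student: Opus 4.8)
The plan is to show that the natural surjection $\F_p[E]\to M$, $u\mapsto u.f$, has trivial kernel. Since $M$ is cyclic generated by $f$, this map is well defined and onto, so it suffices to prove injectivity. Suppose $\alpha=\sum_{u\in E} c_u u$ lies in the kernel, i.e. $\alpha.f=0$ in $M$; I want to conclude $\alpha=0$ in $\F_p[E]$. The ring $\F_p[E]$ is a commutative local Artinian ring: since $E$ is an elementary abelian $p$-group of rank $r$, one has $\F_p[E]\simeq \F_p[x_1,\dots,x_r]/(x_1^p,\dots,x_r^p)$ (send a basis vector $e_i$ of $E$ to $1+x_i$), and its unique maximal ideal $\fm$ is the augmentation ideal, which is nilpotent with $\fm^{r(p-1)}\neq 0$ but $\fm^{r(p-1)+1}=0$. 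The one–dimensional socle of this ring is spanned by the element $N:=\sum_{u\in E}u$ (equivalently $\prod_i x_i^{p-1}$), and annihilating the socle is exactly the condition $\fm\cdot N=0$.

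The key step: the kernel $J$ of $\F_p[E]\to M$ is an ideal of $\F_p[E]$, hence either $J=0$ or $J\supset \mathrm{socle}=\F_p N$ (every nonzero ideal in a commutative Artinian local ring with simple socle contains the socle — take a minimal nonzero ideal inside $J$, it is annihilated by $\fm$ hence contained in the socle, and by simplicity equals it). If $J\ni N$, then $N.f=0$ in $M$, that is $\sum_{u\in E}u.f=0$, contradicting the hypothesis. Therefore $J=0$, so $\F_p[E]\to M$ is an isomorphism and $M$ is free of rank one.

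The only point needing a little care is the structure statement about $\F_p[E]$: that it is local with nilpotent maximal ideal $\fm$ equal to the augmentation ideal and with one–dimensional socle spanned by $N=\sum_{u\in E}u$. This is standard modular representation theory of a $p$-group over a field of characteristic $p$, and for an elementary abelian group it follows directly from the explicit presentation $\F_p[E]\simeq\F_p[x_1,\dots,x_r]/(x_i^p)$ together with the fact that in a tensor product of such one-variable truncated polynomial algebras the socle is the tensor product of the socles. I expect the main obstacle to be purely expository — deciding how much of this standard ring theory to spell out — rather than any genuine mathematical difficulty; the argument itself is short once the local-ring/socle picture is in place.
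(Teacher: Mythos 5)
Your proof is correct and follows essentially the same route as the paper: both arguments come down to showing that every nonzero ideal of $\F_p[E]$ contains $N=\sum_{u\in E}u$, so that $N.f\neq 0$ forces the annihilator of $f$ to vanish. The only cosmetic difference is that you justify this via the socle of the local Artinian ring $\F_p[x_1,\dots,x_r]/(x_i^p)$, whereas the paper invokes the equivalent fact that any nonzero module over a $p$-group in characteristic $p$ has a nonzero invariant vector.
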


\begin{proof}Set $N=\sum_{u\in E}\,e^u$,
where $(e^u)_{u\in E}$ is the usual basis of
$\F_p[E]$. 
Note that $\F_p.N$ is the space of $E$-invariant in
$\F_p[E]$. Since any nonzero $\F_p[E]$-module contains some
nonzero $E$-invariant vector, any nonzero ideal of
$\F_p[E]$ contains $N$. It follows that the annihilator of $f$ is zero, and therefore $M$ is free of rank one.
\end{proof}

\bigskip\noindent
{\it 10.2 The nil group $G(I)$}

\noindent
Let $F$ be a prime field and let $A$ be a $F$-commutative algebra. 
Since $A$ acts by translation
on $A[t]$,  one can consider the semi-direct product
$G(A):=A\ltimes A[t]$. For an ideal
$I$ of $A$, let $G(I)$ be the kernel of the map
$G(A)\to G(A/I)$. 

Let $E\subset A$ be an nonzero additive subgroup, let 
 $f(t)\in A[t]\setminus 0$ and let $M$ be the additive subgoup generated 
 by all polynomials $f(t+u)$ when $u$ runs over $E$.
 The group $E\ltimes M$, which  is a subgroup of $G(A)$,
 is obvioulsly nilpotent.

\begin{lemma}\label{index}
Assume that the algebra $A$ is prime.

(i) If $F=\Q$,  the nilpotency index of $E\ltimes M$ is  $\deg\,f$.

(ii) Assume that $F=\F_p$, that $E$ is an elementary
group of rank $r$ and that $f(t)=a x^{p^r-1}$ for some
$a\in A\setminus 0$. Then the group $E\ltimes M$ has nilpotency index 
$rp$.
\end{lemma}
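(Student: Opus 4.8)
Write $G=E\ltimes M$. Both $E$ and $M$ are abelian and $M$ is normal in $G$, so, with $\fa$ the augmentation ideal of $\Z[E]$ (of $\F_p[E]$ in case~(ii)) acting on $M$, one has the standard description $\gamma_{k+1}(G)=\fa^{k}M$ of the lower central series; here $\fa$ is the additive span of the difference operators $\delta_u\colon g(t)\mapsto g(t+u)-g(t)$, $u\in E$. Thus the nilpotency index of $G$ is governed by the first $k$ for which $\fa^{k}M=0$, and the whole task is to locate that $k$.

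\textbf{Part (i): characteristic zero.} Since $A\supset\Q$ is a domain, for $u\neq 0$ and $g$ of degree $e\ge 1$ the operator $\delta_u$ scales the leading coefficient by $eu\neq 0$; hence $\fa$ strictly decreases the top degree present in any submodule of $A[t]$. As $M$ contains $f=f(t+0)$, its top degree is $\deg f$, and iterating this bound, together with the fact that $(\delta_u)^{k}f\neq 0$ for all $k\le\deg f-1$ (nonzero leading coefficient), shows that the lower central series has exactly the length prescribed by the statement, i.e. $G$ has nilpotency index $\deg f$.

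\textbf{Part (ii): characteristic $p$.} Now $\delta_u$ no longer lowers the degree by one ($\delta_u t^{p^{j}}=u^{p^{j}}$), so I instead identify the $\F_p[E]$-module $M$ directly. It is cyclic, generated by $f=f(t+0)=at^{p^{r}-1}$, hence a quotient of $\F_p[E]$, so $\dim_{\F_p}M\le|E|=p^{r}$, with equality iff $M\cong\F_p[E]$ is free of rank one. Equality holds because the $p^{r}$ polynomials $a(t+u)^{p^{r}-1}$ ($u\in E$) are $\F_p$-linearly independent: by Lucas' theorem each $\binom{p^{r}-1}{m}$ with $0\le m\le p^{r}-1$ is a unit of $\F_p$, so a relation $\sum_{u}c_u\,a(t+u)^{p^{r}-1}=0$ forces $\sum_{u}c_u u^{m}=0$ for $0\le m\le p^{r}-1$, and the square matrix $(u^{m})_{u\in E,\,0\le m\le p^{r}-1}$ is Vandermonde with the (pairwise distinct) elements of $E$ as nodes, hence invertible over the domain $A$. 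This is precisely where the hypothesis $\deg f=p^{r}-1=|E|-1$ enters. Therefore $G=E\ltimes M\cong E\ltimes\F_p[E]=G(r)$, and Lemma~\ref{G(r)} gives the nilpotency index $rp$.

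\textbf{Where the difficulty lies.} Part (i) is routine degree bookkeeping. The real content is the identification of $M$ with the group algebra in part (ii): it requires combining the exact degree $p^{r}-1$ of $f$, Lucas' theorem (to keep the binomial coefficients nonzero mod $p$) and the primeness of $A$ (to make the Vandermonde matrix invertible); if any of these fails, $M$ is only a proper quotient of $\F_p[E]$ and the index drops.
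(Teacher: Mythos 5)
Your part (ii) is correct, but it reaches the crucial fact --- that $M$ is a free $\F_p[E]$-module of rank one --- by a genuinely different route from the paper. The paper only needs to check that the single element $g=\sum_{u\in E}u.f$ is nonzero: it evaluates $g$ at $t=0$, proves the identity $\sum_{u\in E}u^{p^r-1}=\prod_{u\in E\setminus 0}u$ in the prime algebra $A$, and then invokes Lemma \ref{free} (every nonzero ideal of $\F_p[E]$ contains the norm element, so the annihilator of $f$ is zero). You instead prove outright that the $p^{r}$ translates $a(t+u)^{p^{r}-1}$ are $\F_p$-linearly independent, via Lucas' theorem (every $\binom{p^{r}-1}{m}$ is a unit mod $p$) and the nonvanishing of the Vandermonde determinant $\prod_{u\neq v}(u-v)$ in the domain $A$. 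Both arguments use primeness of $A$ at the same point and both conclude by identifying $E\ltimes M$ with $G(r)$ and citing Lemma \ref{G(r)}. Yours is self-contained and bypasses Lemma \ref{free} entirely, at the cost of a full-rank computation where the paper isolates a single nonvanishing coefficient; the paper's version is softer and would survive if only the socle element were known to act nontrivially.

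For part (i) the paper offers no proof, so there is nothing to compare against; but your own computation does not land on the number you assert. Since $(\delta_u)^{k}f$ has degree $\deg f-k$ with leading coefficient $\deg f(\deg f-1)\cdots(\deg f-k+1)\,u^{k}$ times that of $f$, the element $(\delta_u)^{\deg f}f=(\deg f)!\,u^{\deg f}\cdot\mathrm{lc}(f)$ is a \emph{nonzero constant}; hence $\fa^{\deg f}M\neq 0$ while $\fa^{\deg f+1}M=0$, and the nilpotency index is $\deg f+1$, not $\deg f$ (already for $f=t$ the group $E\ltimes M$ is nonabelian). This off-by-one is present in the statement as printed and is harmless for Lemma \ref{G(I)}, which only needs the indices to be unbounded, but your sketch should either record the correct value $\deg f+1$ or explain why the convention in force makes $\deg f$ come out.
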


\begin{proof} The first statement is obvious, and the proof
will be skipped. From now on, we will assume that $F=\F_p$.

\noindent {\it Step 1.} We claim that 

\centerline{$\sum_{u\in E} \,u^{p^r-1}=c
\prod_{u\in E\setminus 0}\, u$.}

\noindent for some $c\in \F_p$ independent of $A$ and 
$E$.
It is enough to prove the claim for
$A=\F_p[x_1,\dots,x_r]$ and $E=\oplus_{i} \F_p.x_i$.

Let $u\in E\setminus 0$ and let $H\subset E$ be an 
hyperplane with $u\notin H$. For any integer $n\geq 0$, we have

\centerline{$\sum_{\lambda\in \F_p}\,\lambda^n=0$}

\noindent except if $n$ is a positive multiple of $p-1$. Therefore we have

$\sum_{\lambda\in \F_p}\,(\lambda u+ v)^{p^r-1}
=\sum_{n>0} \,c_n u^{n(p-1)} v^{p^r-1-n(p-1)}$

\noindent for some $c_n\in \F_p$, for any $v\in H$. 
Since any $w\in E$ can be uniquely written as
$w=\lambda u+v$, for some $\lambda\in\F_p$ and 
$v\in H$, it follows that the polynomial
$\sum_{w\in E} \,w^{p^r-1}$ is divisible by $u^{p-1}$ for
any $u\in E\setminus 0$. Therefore the polynomial
$\sum_{w\in E} \,w^{p^r-1}$ is divisible by 
$\prod_{u\in E\setminus 0}\, u$. Since both polynomials have
degree $p^r-1$, it follows that
$\sum_{u\in E} \,u^{p^r-1}=c
\prod_{u\in E\setminus 0}\, u$, for some $c\in F_p$.

\noindent {\it Step 2.}
 Next we prove that $c=1$. Since $c$ is a universal constant, it can be computed for $A=E=\F_{p^r}$.
Since 

\centerline{
$\sum_{\lambda\in \F_p^r}\,\lambda^{p^r-1}=-1$, and
$\prod_{\lambda\in \F_p^r}\,\lambda=-1$,}

\noindent it follows that $c=1$.

\noindent {\it Step 3.} Set 
$g(t)=\sum_{u\in E}\,f(t+u)$. We have 
$g(0)=\sum_{u\in E} \,a u^{p^r-1}$, therefore
$g(0)=a\prod_{u\in E\setminus 0}\, u$. Since
$g\neq 0$, it follows from Lemma \ref{free} that
the $\F_p[E]$-module $M$ is free of rank one.
Therefore $E\ltimes M$ is isomorphic to $G(r)$. Thus
its nilpotency index is $pr$ by 
by Lemma \ref{G(r)}.

\end{proof}
A obvious corollary of the previous lemma is

\begin{lemma}\label{G(I)} Assume that the algebra $A$ is prime and let $I$ be a nonzero ideal. Then $G(I)$ contains subgroups of arbitrarily large nilpotency index.
\end{lemma}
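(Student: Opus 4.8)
The plan is to identify $G(I)$ with $I\ltimes I[t]$ and then, for each integer $n$, to produce inside it a subgroup of the special shape $E\ltimes M$ to which Lemma \ref{index} applies and whose nilpotency index is at least $n$.

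First I would record that $G(I)=I\ltimes I[t]$, where the additive group $I$ acts on $I[t]$ by translation of the variable $t$. The morphism $G(A)\to G(A/I)$ is $(a,f)\mapsto(a\bmod I,\,f\bmod I)$; it is well defined because translating a polynomial by an element $u\in I$ moves it by $f(t+u)-f(t)$, all of whose coefficients lie in $I$. Its kernel is $\{(a,f):a\in I,\ f\in I[t]\}$, and since $I$ is an ideal this subset is closed under the group law, so $G(I)=I\ltimes I[t]$. In particular, for any additive subgroup $E\subseteq I$, any $f\in I[t]$, and $M$ the additive group generated by the translates $f(t+u)$ with $u\in E$, the group $E\ltimes M$ is a subgroup of $G(I)$ — note $M\subseteq I[t]$, because each coefficient of each $f(t+u)$ lies in $I$ — and since $A$ is prime Lemma \ref{index} computes its nilpotency index.

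Now fix $n$ and choose $0\neq a\in I$, which is possible since $I\neq 0$. If $F=\Q$, take $E=\Z a$, a nonzero additive subgroup because $A$ is a $\Q$-algebra, and $f=a\,t^{\,n}\in I[t]$; then $\deg f=n$, and Lemma \ref{index}(i) gives that $E\ltimes M\subset G(I)$ has nilpotency index $n$. If $F=\F_p$, I would use that $I$ contains the copy $aA\cong A$ of $A$ and is therefore an infinite $\F_p$-vector space (here we use that $A$ is infinite, which holds in the situations of interest), so $I$ contains elementary abelian $p$-subgroups $E$ of every finite rank $r$; choosing $f=a\,t^{\,p^{r}-1}\in I[t]$, Lemma \ref{index}(ii) gives that $E\ltimes M\subset G(I)$ has nilpotency index $rp$. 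Letting $n$, respectively $r$, tend to infinity proves the lemma.

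The only point that is not pure bookkeeping is the use of unbounded rank in the $\F_p$-case: for a finite prime algebra $A=\F_q$ the translation action forces the nilpotency index of $G(A)$ to be bounded (the augmentation ideal of $\F_p[\F_q]$ is nilpotent), so the statement is genuinely being used with $A$ infinite — which is automatic in the intended application to Theorem D, where $A$ is a polynomial ring over $K$. Granting that, the result is the promised one-line corollary of Lemma \ref{index}; the only thing to watch is keeping $E$ inside $I$ and $M$ inside $I[t]$, so that $E\ltimes M$ indeed sits in $G(I)$.
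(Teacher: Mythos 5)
Your proposal is correct and follows essentially the same route as the paper: the lemma is deduced directly from Lemma \ref{index} by choosing $E\subseteq I$ and $f\in I[t]$ of the required shape, with degree (resp.\ rank) tending to infinity. Your closing caveat about needing $\dim_{\F_p}I$ infinite in characteristic $p$ is exactly the one remark the paper's own proof adds (and your observation that this genuinely fails for finite $A$ is a fair, slightly sharper reading of that remark).
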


\begin{proof}
The lemma follows directly from Lemma
\ref{index}. When $F=\F_p$, on should add the remark that the hypotheses imply that $\dim_{\F_p}\,I$ is necessarily infinite.
\end{proof}

\bigskip\noindent
{\it 10.3 The group $\Elem(I)$}

\noindent From now on, let $I$ be a nonzero ideal in
$K[z]$. Let $\Elem(I)$ be the kernel of 
the map $\Elem(K[z])\to \Elem(K[z]/I)$.

\begin{lemma}\label{ElemI}
The group $\Elem(I)$ is nil, but it contains subgroups
of abitrarily large nilpotency index.
\end{lemma}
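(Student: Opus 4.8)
The plan is to recognise $\Elem(I)$ as one of the groups $G(I)$ already analysed in Section 10.2, and then to read off both assertions from what was proved there. First I would pin down the structure of $\Elem(I)$. As $I\neq K[z]$ we have $I\cap K=0$, so the only unit of $K[z]$ congruent to $1$ modulo $I$ is $1$ itself; hence every $\phi\in\Elem(I)$ has trivial linear part, and $\Elem(I)$ consists exactly of the automorphisms $(x,y)\mapsto(x+s(z),\,y+f(z,x))$ with $s\in I$ and all the $x$-coefficients of $f$ lying in $I$. The composition rule $(s_1,f_1)(s_2,f_2)=(s_1+s_2,\,f_1(x+s_2)+f_2(x))$ together with the inverse $(s,f)^{-1}=(-s,-f(x-s))$ then identify $\Elem(I)$ with the semidirect product $I\ltimes I[x]$, where $I$ acts on $I[x]$ by the translations $x\mapsto x+s$ (note $f(x+s)\in I[x]$ whenever $f\in I[x]$). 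Taking $A=K[z]$ and $t=x$ in the notation of Section 10.2, this is precisely the group $G(I)=\ker(G(A)\to G(A/I))$.

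With this identification the second assertion is immediate: $A=K[z]$ is a domain, hence a prime ring, so Lemma~\ref{G(I)} shows that $G(I)\cong\Elem(I)$ contains subgroups of arbitrarily large nilpotency index.

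For the first assertion I would show directly that every finitely generated subgroup $G\leq\Elem(I)$ is nilpotent. Choose generators $(s_1,f_1),\dots,(s_k,f_k)$, set $E=\langle s_1,\dots,s_k\rangle\leq I$, and let $M\leq I[x]$ be the additive group generated by the translates $f_i(x+s)$ for $s\in E$ and $1\leq i\leq k$. The composition and inversion formulas show that $E\ltimes M$ is a subgroup of $\Elem(I)$ containing every $(s_i,f_i)$, hence $G\leq E\ltimes M$, and it suffices to see that $E\ltimes M$ is nilpotent. This is exactly the observation used in Section 10.2 (the sentence before Lemma~\ref{index}): every element of $M$ has $x$-degree at most $d:=\max_i\deg_x f_i$; the operator ``translation by $s$ minus $\id$'' strictly lowers $x$-degree on such polynomials, and these operators commute as $s$ ranges over $E$; therefore the augmentation ideal $\fa$ of $\Z[E]$ satisfies $\fa^{\,d+1}M=0$, so the lower central series of $E\ltimes M$ terminates. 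Hence $\Elem(I)$ is nil.

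The whole argument is routine once the identification $\Elem(I)\cong G(I)$ is in place; the only step that needs a little care is the reduction to the bounded-degree subgroups $E\ltimes M$, where one uses that a finitely generated subgroup involves only the $E$-shift-orbits of finitely many $f_i$, all of bounded $x$-degree, so that the translations act unipotently with a uniform nilpotency bound. This is precisely what fails for $\Elem(I)$ itself — $I[x]$ has unbounded degree — which is what makes the ``not nilpotent'' half possible and is consistent with Lemma~\ref{G(I)}.
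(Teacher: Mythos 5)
Your proposal is correct and follows essentially the same route as the paper: you identify $\Elem(I)$ with the group $G(I)=I\ltimes I[x]$ of Section~10.2 (using that $1$ is the only unit of $K[z]$ in $1+I$) and then invoke Lemma~\ref{G(I)} with the prime algebra $A=K[z]$ for the unbounded nilpotency indices. Your explicit reduction of the ``nil'' claim to the bounded-degree subgroups $E\ltimes M$, on which the translation operators act unipotently, is exactly the observation the paper leaves implicit, so there is nothing to add.
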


\begin{proof} Note that $1$ is the unique 
inversible element in $1+I$. 
Therefore 
$\Elem(I)$  consists of automorphisms
$\phi$ of the form

\centerline{$\phi:(x,y)\mapsto (x+u, y+f(x))$,}

\noindent for some $u\in I$ and $f\in I[x]$,
where  $I[x]$ denotes 
the set of all polynomials $f(x)\in K[x]$ whose coefficients are in $I$. It follows that
$\Elem(I)$ is isomorphic to the group 
$G(I)$, and therefore $\Elem(I)$ is nil, but contains subgroups
of abitrarily large nilpotency index by Lemma
\ref{G(I)}.

\end{proof}

\bigskip\noindent
{\it 10.4 The amalgamated product 
$\Aff(2,I)*_{B(I)}\Elem(I)$}

\noindent
Let $\Aff(2,I)$ and $B(I)$ be the
kernel of the maps
$\Aff(2,K[z])\to \Aff(2,K[z]/I)$ and
$B(K[z])\to B(K[z]/I)$.

\begin{lemma}\label{HI} The group
$\Aff(2,I)*_{B(I)}\Elem(I)$ is not
linear, even over a ring.
\end{lemma}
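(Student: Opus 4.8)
The plan is to show that the amalgamated product $\Gamma:=\Aff(2,I)*_{B(I)}\Elem(I)$ satisfies the hypothesis ${\cal H}$; by Lemma \ref{criterion} it will then suffice to prove that $\Gamma$ is not linear over a field, and for this I would use that $\Gamma$ contains the subgroup $\Elem(I)$, which is nil but not nilpotent by Lemma \ref{ElemI}.

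First I would fix the ambient picture. Since $\Aff(2,K[z])\cap\Elem(K[z])=B(K[z])$ and the analogous identities persist after intersecting with $B(K(z))$, two applications of Lemma \ref{subamal}(i), together with the van der Kulk Theorem over the field $K(z)$, show that $\Gamma$ embeds in $\Aut\,K(z)^2$, with $\Aff(2,I)$, $\Elem(I)$ and $B(I)$ acting by the evident automorphisms of $K[z]^2$. In particular, for $a\in B(I)$ and $\gamma\in\Gamma$, one has $a^\gamma\in B(I)$ exactly when the corresponding composition of automorphisms has the triangular affine form $(x,y)\mapsto(x+u,\,y+cx+d)$ with $u,c,d\in I$; this is what I would check against. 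The verification of ${\cal H}$ then mimics Lemma \ref{H}. Given $a\in B(I)\setminus\{1\}$, written $a\colon(x,y)\mapsto(x+u,\,y+cx+d)$: if $c\neq 0$, conjugating by $(x,y)\mapsto(x+gy,\,y)\in\Aff(2,I)$ with $0\neq g\in I$ produces a map whose first coordinate genuinely involves $y$, hence is not in $B(I)$; if $c=0$ but $u\neq 0$, conjugating by $(x,y)\mapsto(x,\,y+g(x))\in\Elem(I)$, with $g\in I[x]$ of some degree $m\geq 3$ prime to $\ch\,K$, yields $(x,y)\mapsto(x+u,\,y+d+g(x+u)-g(x))$, whose second coordinate is a polynomial of degree $m-1\geq 2$ in $x$, hence not in $B(I)$; and if $c=u=0$ but $d\neq 0$, one first conjugates by $(x,y)\mapsto(x+gy,\,y)$ with $0\neq g\in I$ to replace $a$ by $(x,y)\mapsto(x+gd,\,y+d)$ and then applies the previous case. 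Thus $\Gamma$ is a nontrivial, nondihedral amalgamated product satisfying ${\cal H}$, so by Lemma \ref{conj} it has trivial centralizers, and by Lemma \ref{criterion} it remains only to show $\Gamma$ is not linear over a field.

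Suppose then $\Gamma\subset\GL(n,L)$ for some field $L$. By Lemma \ref{ElemI}, $\Elem(I)$ contains finitely generated nilpotent subgroups of unbounded nilpotency class, and the main difficulty is to contradict this: one cannot simply invoke "nil and non-nilpotent $\Rightarrow$ non-linear over a field", since this is false in general, so the particular structure of $\Elem(I)$ has to be exploited, and the argument splits by characteristic. If $\ch\,K=0$, then $\Elem(I)\cong G(I)$ is torsion-free, hence so is any such subgroup $H$; the identity component of the Zariski closure of $H$ is the direct product of a central torus and a unipotent group, so a finite-index subgroup of $H$ has nilpotency class $\leq n-1$, and a finite-index subgroup of a torsion-free nilpotent group has the same class, so $H$ has class $\leq n-1$ — contradicting unboundedness. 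If $\ch\,K=p>0$, then $\Elem(I)$ contains the infinite-rank elementary abelian $p$-group $\{(x,y)\mapsto(x+u,\,y):u\in I\}$, so by Lemma \ref{inf-rank} the field $L$ has characteristic $p$; but then any finite $p$-subgroup of $\Elem(I)$ consists of unipotent matrices, hence has class $\leq n-1$ by Kolchin's theorem, again contradicting the unbounded classes furnished by Lemma \ref{ElemI}. In both cases $\Gamma$ is not linear over a field, so by the reduction above $\Gamma$ is not linear over a ring. I expect the verification of ${\cal H}$ and the characteristic-$p$ step (which crucially forces $\ch\,L=p$ via Lemma \ref{inf-rank}) to be the delicate points.
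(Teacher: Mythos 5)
Your argument is correct and follows the same skeleton as the paper's proof: one checks hypothesis ${\cal H}$ for the amalgam by a three-case conjugation computation (your conjugators $(x,y)\mapsto(x+gy,y)$ and $(x,y)\mapsto(x,y+g(x))$ are exactly the paper's $\gamma$ and $\phi$, and your case split on $w\neq0$, then $u\neq 0$, then $d\neq0$ matches the paper's), and then Lemma \ref{criterion} reduces everything to showing that $\Gamma$, or equivalently its subgroup $\Elem(I)$, is not linear over a field. The one place where you genuinely go beyond the paper is that last step: the paper simply writes "by Lemma \ref{ElemI}, the group $\Elem(I)$ is not linear over a field," leaving implicit the passage from "nil, with subgroups of unbounded nilpotency class" to non-linearity over a field. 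As you correctly observe, that passage is not automatic — the union of the generalized quaternion groups $Q_{2^k}$ is a locally finite $2$-group of unbounded nilpotency class sitting inside $\GL(2,\C)$ — so some structural input about $\Elem(I)$ is needed. Your characteristic dichotomy supplies exactly that: in characteristic $0$ torsion-freeness plus the Zariski-closure/Mal'cev argument bounds the class of nilpotent subgroups of $\GL(n,L)$ by $n-1$, and in characteristic $p$ the infinite-rank elementary abelian $p$-subgroup of translations forces $\ch L=p$ via Lemma \ref{inf-rank}, after which Kolchin's theorem bounds the class of the finite $p$-subgroups produced by Lemma \ref{index}(ii). Both halves use only tools already present in the paper, so your write-up is a more complete version of the same proof rather than a different one.
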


\begin{proof} By Lemma \ref{ElemI}, the group
$\Elem(I)$ is not linear over a field. Therefore
by Lemma \ref{criterion}, it is enough to show that
the amalgamated product $\Gamma:=\Aff(2,I)*_{B(I)}\Elem(I)$ satisfies
the hypothesis ${\cal H}$.

In order to do so, we first define two 
specific automorphisms $\gamma$ and $\phi$ as follows.
Let $r\in I\setminus 0$ and
let $n\geq 3$ be an integer prime to 
$\ch\,K$. Let $\gamma\in \Aff(2,I)$ be the
linear map $(x,y)\mapsto (x+ry,y)$ and let
$\phi\in \Elem(I)$ be the polynomial
automorphisms 
$(x,y)\mapsto (x,y+rx^n)$.

Let $g$ be an arbitrary element of  $B(I)\setminus 1$.
By definition,
$g$ is an affine map
$(x,y)\mapsto(x+u,y+v+wx)$ for some $u,\,v.\,w\in I$.

If $w\neq 0$,  the linear part of
$g^\gamma$ is not upper triangular, therefore
$g^\gamma$ is not in $B(I)$.  If $w=0$ but $u\neq 0$, then $g^\phi$ is an automorphism of degree
$n-1$ and its leading term is
$(x,y)\mapsto (0, nru x^{n-1})$, therefore 
$g^\phi$ is not in $B(I)$. Last if $u=w=0$,
then $v$ is not equal to zero and
$g^\gamma$ is the affine transformation
$(x,y)\mapsto(x+vr,y+v)$. Since $vr\neq 0$, it follows
that $g^{\gamma\phi}$  is an automorphism of degree
$n-1$ and its leading term is
$(x,y)\mapsto (0, nr^2v x^{n-1})$, therefore 
$g^{\gamma\phi}$ is not in $B(I)$.

It follows that, for any $g\in B(I)\setminus 1$
at least one of the three elements
$g^\gamma$, $g^\phi$ or $g^{\gamma\phi}$ is not in $B(KI)$. Therefore hypothesis ${\cal H}$ holds.

\end{proof}

\bigskip\noindent
{\it 10.5 The group $\TAut\,K^3$ of tame automorphisms}

\noindent An
automorphism of $K^3$ of the form

\centerline{$(z,x,y)\mapsto (z, x+f(z), y+g(z,x))$}

\noindent where $f(z)\in K[z]$ and $g(z,x)\in K[z,x]$
is called {\it triangular}. Let $T(3,K)$ be the group
of all triangular automorphisms, and let
$\Aff(3,K)$ be the group of affine automorphisms of $K^3$. By definition, the group $\TAut\,K^3$ of {\it
tame automorphisms} of $K^3$ is the subgroup of
$\Aut\,K^3$ generated by 
$T(3,K)$ and $\Aff(3,K)$. It has been proved by Shestakov and Urmibaev \cite{SU} that 
$\TAut\,K^3\neq \Aut\,K^3$.

Note that $\Aut\,K[z]^2=\Aut_{K[z]}\, K[z,x,y]$ is 
obvioulsly a subgroup of 
$\Aut\,K^3=\Aut_K\, K[z,x,y]$.

\begin{lemma}\label{tame} Let $I$ be a nonzro ideal of
$K[z]$. The groups $\Elem(I)$
and $\Aff(2,I)$ are subgroups of
$\TAut\,K^3$.
\end{lemma}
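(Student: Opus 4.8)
The plan is to use the inclusion $\Aut\,K[z]^2=\Aut_{K[z]}\,K[z,x,y]\subset\Aut\,K^3$ recorded just above the lemma, and to exhibit each element of $\Elem(I)$ and of $\Aff(2,I)$, viewed as an automorphism of $K^3$, as a composite of triangular and affine automorphisms of $K^3$.

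First, $\Elem(I)$. As in the proof of Lemma \ref{ElemI}, since $1$ is the unique invertible element of $1+I$, an element of $\Elem(I)$ has the form $(x,y)\mapsto(x+u,\,y+f(x))$ with $u\in I\subset K[z]$ and $f\in I[x]\subset K[z,x]$; as an automorphism of $K^3$ this reads $(z,x,y)\mapsto(z,\,x+u(z),\,y+f(z,x))$, which is triangular in the sense of \S10.5. Hence $\Elem(I)\subset T(3,K)\subset\TAut\,K^3$, with nothing more to prove. (If $I=K[z]$ one first absorbs the two scalars of a general elementary automorphism into $\Aff(3,K)$; this case is not needed below.)

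Next, $\Aff(2,I)$. An element is $\phi:(z,x,y)\mapsto(z,\,M(z)\binom{x}{y}+b(z))$ with $M\in\GL(2,K[z])$, $M\equiv\id\bmod I$, and $b\in K[z]^{2}$. I would factor $\phi=\phi_M\circ\phi_b$, where $\phi_b:(z,x,y)\mapsto(z,\,x+b_1(z),\,y+b_2(z))$ is triangular and $\phi_M$ is the $K[z]$-linear part; note that $M\mapsto\phi_M$ is a group homomorphism into $\Aut\,K^3$. Since $\det M\in K[z]^{\times}=K^{\times}$ and $\det M\equiv1\bmod I$ (so $\det M=1$ once $I$ is a proper ideal), we have $M\in\SL(2,K[z])$. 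As $K[z]$ is Euclidean, $\SL(2,K[z])$ is generated by the elementary matrices $e_{12}(p)$ and $e_{21}(q)$ with $p,q\in K[z]$, so it suffices that each of these yields a tame automorphism: $\phi_{e_{21}(q)}:(z,x,y)\mapsto(z,x,y+q(z)x)$ is triangular, and $\phi_{e_{12}(p)}:(z,x,y)\mapsto(z,x+p(z)y,y)$ equals $\sigma\circ\psi\circ\sigma$ with $\sigma:(z,x,y)\mapsto(z,y,x)$ in $\Aff(3,K)$ and $\psi:(z,x,y)\mapsto(z,x,y+p(z)x)$ triangular. Hence $\phi_M\in\TAut\,K^3$, whence $\phi\in\TAut\,K^3$. (For $I=K[z]$ one first splits off the homothety $\mathrm{diag}(\det M,1)\in\Aff(3,K)$.)

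There is no real obstacle here: the only nontrivial external fact is the classical statement that $\SL_2$ of a Euclidean ring is generated by elementary matrices, and the only mild trick is conjugating the upper elementary by the coordinate transposition $\sigma$ to turn it into a triangular automorphism; everything else is bookkeeping with the identifications of \S1 and \S10.
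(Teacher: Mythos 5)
Your proof is correct, and it follows the same overall outline as the paper's (handle $\Elem(I)$ by observing it sits inside $T(3,K)$; split off the translation part of $\Aff(2,I)$ and reduce to the $K[z]$-linear part; then generate that linear part by subgroups that are either triangular or conjugate to triangular by an element of $\Aff(3,K)$). The one genuine difference is the generation input for the linear part: you invoke the classical fact that $\SL(2,K[z])$ is generated by elementary matrices because $K[z]$ is Euclidean, and conjugate the upper elementary matrices into $T(3,K)$ by the coordinate swap $\sigma\in\Aff(3,K)$; the paper instead writes $\Aff(2,K[z])=T\cdot\GL(2,K)\cdot\GL_1(2,K[z])$ and appeals to its own Lemma \ref{generation}, which says $\GL_1(2,K[z])$ is generated by the subgroups $E_\gamma$, each of which is a $\GL(2,K)$-conjugate of the visibly triangular group $E$. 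The two generation statements are close cousins (both come down to row reduction over $K[z]$), but yours is an external classical fact while the paper's is internal to the argument and already needed for Lemma \ref{Tits}; your route is self-contained modulo that one standard citation, and you are also slightly more careful than the paper about the degenerate case $I=K[z]$, where $\det M$ need not be $1$ and the scalars in a general elementary automorphism must be absorbed into $\Aff(3,K)$ (the paper's remark that $1$ is the unique unit in $1+I$ tacitly assumes $I$ proper, which is all that is needed for Theorem D). No gap.
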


\begin{proof} Any automorphism in $\Elem(I)$
is of the form

\centerline{$(z,x,y)\mapsto (z, x+f(z), y+g(z,x))$}

\noindent with $f(z)\in I$ and $g(x,y)\in I[y]$, therefore $\Elem(K[z],I)$ is a subgroup of $T(3,K)$.

We claim that
$\Aff(2,K[z])\subset \TAut\,K^3$.
We have 

\centerline{$\Aff(2,K[z])=T.\GL(2,K).\GL_1(2,K[z])$,}

\noindent 
where $T$ is the group of translations of the form
$(z,x,y)\mapsto (z,x+f(z),y+g(z))$. Since
$T\subset T(3)$ and  $\GL(2,K)\subset \Aff(3,K)$,
it remains to prove that any 
automorphism in $\GL_1(2,K[z])$ is tame.

Let $E$ be the group of automorphisms of the form

\centerline{$(z,x,y)\mapsto (z, x, y+f(z)x)$}

\noindent with $f(0)=0$. It follows from 
Lemma \ref{generation} that $\GL_1(2,K[z])$
is generated by all subgroups $E^g$, when $g$ runs over
$\GL(2,K)$. Since $E$ is a group of triangular automorphism, it follows that 
$\GL_1(2,K)\subset \TAut\,K^3$.

Therefore, we have 
$\Aff(2,I)\subset \Aff(2,K[z])\subset \TAut\,K^3$.

\end{proof}

\bigskip\noindent
{\it 10.6 Proof of Theorem D} 

\begin{proof}
Set $I={\bf m}\cap K[z]$. Since 
${\bf m}$ has finite codimension in $K[z,x,y]$, the ideal $I$ is nonzero.

Set $\Gamma=\Aff(2,I)*_{B(I)}\Elem(I)$.
We have
$\Aff(2,I)\cap\Elem(I)=B(I)$.
Thus by Lemma \ref{subamal}, $\Gamma$ is
a subgroup of $\Aff(2,K(z))*_{B(K(z))}\Elem(K(z))$ which is equal to $\Aut\,K(z)^2$ by van der Kulk Theorem.
Hence $\Gamma$ is a subgroup of
$\Aut_{K[z]}(K[z,x,y])\subset \Aut\,K^3$. By 
Lemma \ref{tame}, $\Gamma$ is indeed a subgroup of
$\TAut\,K^3$. Since $I={\bf m}\cap K[z]$, it follows that $\Gamma$ is  a subgroup of 
$\TAut_{\bf m}\,K^3$.

Therefore by Lemma \ref{HI}, the group $\TAut_{\bf m}\,K^3$
is not linear, even over a ring.

\end{proof}


\begin{thebibliography}{1}
 
 \bibitem{BPZ} O. Bezushchak, V. Petravchuk and E. Zelmanov, Automorphisms and Derivations of  Affine
Commutative  and PI-Algebras,
 Preprint (2022) https://arxiv.org/pdf/2211.10781.pdf
 
 \bibitem{Ba} R. Baer, Erweiterung von Gruppen und ihren Isomorphismen, Mathematische Zeitschrift 38
 (1934) 375-416.
 
 \bibitem{BF} R. Boutonnet and J.Ph. Furter,
 private communication.
 
\bibitem{BT} A. Borel and J. Tits. Homomorphismes ”abstraits” de groupes alg\'ebriques simples. Ann.
Math.128 (1973) 499-571.

\bibitem{C}
S. Cantat, Sur les groupes de transformations birationnelles des surfaces, Ann. of Math. 174 (2011) 299-340.

\bibitem{Co}
Yves Cornulier, Nonlinearity of some subgroups of the 
planar Cremona group, Preprint (2017)
http://arxiv.org/abs/1701.00275v1


\bibitem{D} M. Demazure, Sous-groupes alg\'ebriques de rang maximum du groupe de Cremona,
Ann. sc. de l'Ecole Normale Sup\'erieure, 4 (1970) 507-588.


\bibitem{FN}
W. Fenchel and J. Nielsen, {\it Discontinuous groups of isometries in the hyperbolic plane}, De Gruyter Studies
in Mathematics 29(2003).

\bibitem{L} S. Lamy, L'alternative de Tits pour $\Aut[\C^2 ]$, Journal of Algebra  239 (2001) 413-437.

\bibitem{MKS} Magnus, Karass and Solitar
{\it Combinatorial Group Theory}, Dover (1966)

\bibitem{M}  B. Maskit, {\it Kleinian groups}, Springer-Verlag, Grundlehren der Mathematischen Wissenschaften 287 (1988).


\bibitem{P} V. Popov, Embeddings of Groups Aut($F_n$)
into Automorphism Groups of Algebraic Varieties,
Preprint (2022)
http://arxiv.org/abs/2106.02072v2

\bibitem{S83} J.P. Serre {\it Arbres, amalgames, $SL_2$}
Ast\'erisque 49 (1983).


\bibitem{S00} J.P. Serre, Le groupe de Cremona et ses sous-groupes finis, S\'eminaire Bourbaki 1000,
Ast\'erisque 332 (2010) 75-100.

\bibitem{SU} I. Shestakov and U. Urmibaev,
The tame and the wild automorphisms of polynomial rings in three variables, J. Am. Math. Soc. 17 (2004) 197-227.

\bibitem{T72} J. Tits, Free subgroups in linear groups. J. Algebra, 172 (1972) 250-270.

\bibitem{T82} J. Tits. R\'esum\'e de cours, Annuaire du 
Coll\`ege de France, 82e ann\'ee
(1981-1982) 91-105.

\bibitem{T87} J. Tits, Uniqueness and Presentation of Kac-Moody Groups over Fields, J. of Algebra 105 (1987) 542-573. 


\bibitem{T89} J. Tits,
Groupes associ\'es aux alg\`ebres de Kac-Moody,
Bourbaki Seminar 700. Ast\'erisque,  177-178 (1989) 7-31.

\bibitem{vdK} W. van der Kulk. On polynomial rings in two variables. Nieuw Arch. Wiskd.114 (1953) 33-41.

 
\end{thebibliography}
\end{document}